\theoremstyle{plain}
\newtheorem{thm}{Theorem}[section]
\newtheorem{prop}[thm]{Proposition}
\newtheorem{lem}[thm]{Lemma}
\newtheorem{cor}[thm]{Corollary}
\newtheorem{defn}{Definition}[section]
\theoremstyle{remark}
\newtheorem*{clm}{Claim}
\newtheorem*{rmk}{Remark}
\newcommand{\sgn}{\mathrm{sgn}\;\!}
\newcommand{\ve}{\varepsilon}
\newcommand{\mbbr}{\mathbb{R}}
\newcommand{\mclc}{\mathcal{C}}
\newcommand{\mclh}{\mathcal{H}}
\newcommand{\mcll}{\mathcal{L}}
\newcommand{\mclm}{\mathcal{M}}
\newcommand{\msca}{\mathscr{A}}
\newcommand{\mscc}{\mathscr{C}}
\newcommand{\mscp}{\mathscr{P}}
\newcommand{\mscq}{\mathscr{Q}}
\newcommand{\msct}{\mathscr{T}}
\newcommand{\hc}{\hat{c}}
\newcommand{\tc}{\tilde{c}}
\newcommand{\hI}{\hat{I}}
\newcommand{\dif}{\;\!\mathrm{d}\:\!}
\newcommand{\bma}{\mathbbm{a}}
\newcommand{\dom}{\mathrm{Dom}}
\newcommand{\cl}{\mathrm{cl}\,}
\newcommand{\inr}{\mathrm{int}\,}
\newcommand{\slowrec}{\msca}
\newcommand{\typical}{\msct}
\begin{document}

\title{On fair entropy of the tent family}

\author[B. Gao]{Bing Gao\mbox{\ \ }}
\address{Bing Gao: School of Mathematics, Hunan University, Changsha 410082, China}
\email{binggao@hnu.edu.cn}

\author[R. Gao]{\mbox{\ \ }Rui Gao}
\address{Rui Gao: College of Mathematics, Sichuan University, Chengdu 610064, China}
\email{gaoruimath@scu.edu.cn}

\maketitle

\begin{abstract}

  The notions of fair measure and fair entropy were introduced by Misiurewicz and Rodrigues \cite{MR18} recently, and discussed in detail for piecewise monotone interval maps. In particular, they showed that the fair entropy $h(a)$ of the tent map $f_a$, as a function of the parameter $a=\exp(h_{top}(f_a))$, is continuous and strictly increasing on $[\sqrt{2},2]$. In this short note, we extend the last result and characterize regularity of the function $h$ precisely. We prove that $h$ is $\frac{1}{2}$-H\"{o}lder continuous on $[\sqrt{2},2]$ and identify its best H\"{o}lder exponent on each subinterval of $[\sqrt{2},2]$. On the other hand,  parallel to a recent result on topological entropy of the quadratic family due to  Dobbs and Mihalache \cite{DM19}, we give a formula of pointwise  H\"{o}lder  exponents of $h$ at parameters chosen in an explicitly constructed  set of full  measure. This formula particularly implies that the derivative of $h$ vanishes almost everywhere.

\end{abstract}

\section{Introduction}\label{se:intro}


Consider the family $f_a:I_a\to I_a$ of tent maps, where $I_a=[1-a,1]$, $f_a(x)=1-a|x|$ with critical (or turning) point $c=0$, $a\in[\sqrt{2},2]$. Denote the critical orbit of $f_a$ by $c_n(a):=f_a^n(c)$. Denote $f_a^{-1}c_3(a)=\{c_2(a),\hc_2(a)\}$, i.e. $\hc_2(a)=-c_2(a)=a-1$. Sometimes we drop the dependence of $c_n$ on $a$ for short.


The notions of fair measure and fair entropy were introduced by Misiurewicz and Rodrigues \cite{MR18} recently for finite-to-one surjective continuous maps. The motivation and intuitive meaning of these notions are clearly elaborated in their paper. For tent maps, these notions read as follows. Given 
$a\in[\sqrt{2},2]$, there exists an atomless $f_a$-invariant Borel probability $\mu_a$, called the {\bf fair measure} of $f_a$, which can be characterized by one of the following two equivalent conditions:
\begin{itemize}
  \item $\mu_a$ is the unique conformal measure of $f_a$ with respect to (w.r.t. for short) the Jacobian $j_a$ defined below
  \[
  j_a(x):=\#f_a^{-1}(f_a x) =\left\{
  \begin{array}{ccl}
    2 &,&   x\in [c_2,\hc_2]\setminus\{c\} \\
    1 &,&   x\in \{c\}\cup (\hc_2, c_1]
  \end{array}
  \right.,
  \]
  in the sense that for any Borel set $E\subset I_a$,

  \begin{equation}\label{eq:fair conformal}
    \text{ $f_a$ is injective on $E$ } \implies   \mu_a (f_a E)=\int_E j_a \dif \mu_a.
  \end{equation}

  \item $\mu_a$ is the unique equilibrium state of  $f_a$ w.r.t. the potential $-\log j_a$ (and pressure $0$) in the sense that
  \begin{equation}\label{eq:fair var}
    0=  h_{\mu_a}(f_a) -\int \log j_a\dif \mu_a  \ge h_\nu(f_a) -\int \log j_a\dif \nu  \,,
  \end{equation}
where $\nu$ is any $f_a$-invariant Borel probability, and ``unique" means that the equality in ``$\ge$" holds only if $\nu=\mu_a$. \eqref{eq:fair var} is a special case of variational principle, whose validity can be guaranteed by \cite[Theorem~3.3]{Ba00}, for example; the uniqueness part follows from \cite[Proposition~3.5]{Ba00}.
  \end{itemize}
The measure-theoretic entropy $h_{\mu_a}(f_a)$ is called the {\bf fair entropy} of $f_a$.  Define
\begin{equation}\label{eq:fair function}
  \mclh(a):=\frac{h_{\mu_a}(f_a)}{2\log 2} ,\quad a\in [\sqrt{2},2]
\end{equation}
for convenience. It is shown in \cite[Theorem~5.13]{MR18}  that $\mclh:[\sqrt{2},2]\to [\frac{1}{4},\frac{1}{2}]$ is an increasing homeomorphism. In this paper we are concerned about the regularity of $\mclh$.
Our first main result is the following.

\begin{thm}\label{thm:interval}
There exist an increasing sequence (specified in Proposition~\ref{prop:window}) of parameters $\sqrt{2}=\bma_2<\bma_3<\cdots $ approaching to $2$ , such that the following hold for each $r\ge 2$.
\begin{enumerate}
  \item  Given $\bma_r <b\le 2$, $\mclh$ is H\"{o}lder continuous on $[\bma_r,b]$ with H\"{o}lder exponent $\alpha(r,b)$ defined below:
  \begin{equation}\label{eq:holder exp}
     \alpha(r,b):=\frac{(r-1)\log 2}{r\log b} \,.
  \end{equation}
  In particular, $\mclh$ is $\frac{1}{2}$-H\"{o}lder on $[\sqrt{2},2]$.
  \item The H\"{o}lder exponents given by \eqref{eq:holder exp}  are optimal in the following sense: for any subinterval $J$ of $[\sqrt{2},2]$ with $b=\sup J\le \bma_{r+1}$ and any $\alpha>\alpha(r,b)$, $\mclh$ is not $\alpha$-H\"{o}lder continuous on $J$.
\end{enumerate}

\end{thm}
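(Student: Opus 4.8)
First I would extract from the equality case of \eqref{eq:fair var} an explicit expression for $\mclh$. Since $\log j_a$ equals $\log 2$ on the central interval $J_a:=[c_2(a),\hc_2(a)]=[1-a,a-1]$ and vanishes off it, the equilibrium identity gives $h_{\mu_a}(f_a)=\int\log j_a\,\dif\mu_a=\log 2\cdot\mu_a(J_a)$, so that
\[
\mclh(a)=\tfrac12\,\mu_a(J_a),\qquad a\in[\sqrt2,2].
\]
The interval $J_a$ is real-analytic in $a$ and moves at bounded speed, so the whole regularity problem for $\mclh$ is reduced to comparing the fair measures $\mu_a$ of nearby maps.

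Next, for part (1), fix $r\ge 2$, $\bma_r<b\le 2$, and $\bma_r\le a<a'\le b$ with $a'-a$ small. Differentiating $c_{n+1}=1-a|c_n|$ gives $|\partial_a c_n|\le\sum_{k<n}a^k\lesssim b^n$ uniformly in $a\le b$; hence if the kneading sequences of $f_a,f_{a'}$ (equivalently the itineraries of $c$) first disagree at time $n+1$, then $a'-a\gtrsim b^{-n}$, so $f_a$ and $f_{a'}$ share the same admissible combinatorics down to a depth $n\gtrsim\log\!\big(1/(a'-a)\big)/\log b$. (If the critical orbit of $f_a$ returns close to $c$ before time $n$, this step must be run through a binding-period argument in place of a naive lower bound on $\dist(c_k,c)$, but the conclusion is unchanged; this is part of the role of Proposition~\ref{prop:window}.) Refining to the partition $\{[1-a,0),(0,a-1],(a-1,1]\}$, on which $j_a\equiv 2,2,1$, conformality gives for every depth-$n$ cylinder $C$
\[
\mu_a(C)=\mu_a(f_a^nC)\,\prod_{i=0}^{n-1}j_a(f_a^i x)^{-1}\le 2^{-(\,n-\#\{i<n:\ f_a^i x\in(\hc_2,c_1]\}\,)},
\]
and the substance of Proposition~\ref{prop:window} is that on the window $[\bma_r,\bma_{r+1}]$ the exceptional count is at most $n/r+O(1)$, whence $\mu_a(C)\lesssim 2^{-(r-1)n/r}$. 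As $\mu_a$ and $\mu_{a'}$ are built from the same combinatorics down to depth $n$, they can differ only on the finitely many cylinders and boundary intervals still undetermined at that depth, whose total fair mass is $\lesssim 2^{-(r-1)n/r}$ by the same bound; therefore
\[
|\mclh(a')-\mclh(a)|\lesssim 2^{-(r-1)n/r}\lesssim (a'-a)^{\frac{(r-1)\log 2}{r\log b}}=(a'-a)^{\alpha(r,b)}.
\]
The case $r=2$, $b=2$ is the asserted $\tfrac12$-H\"older bound on $[\sqrt2,2]$.

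For part (2) one must show the estimate is saturated. Given $r$ and $J$ with $b=\sup J\le\bma_{r+1}$ (we may assume $b>\bma_r$, the complementary case being weaker), I would construct a sequence of ``Markov'' parameters $a_k\to b$ in $J$ at which the critical orbit of $f_{a_k}$ is eventually periodic and shadows the periodic orbit of the window realising the extremal fair-Jacobian rate — one passage through $(\hc_2,c_1]$ per $r$ iterates — so that a cylinder following this orbit has $\mu_{a_k}$-mass $\asymp 2^{-(r-1)n_k/r}$ at depth $n_k$ (the matching lower bound is again in Proposition~\ref{prop:window}). Near such parameters the derivative estimate is two-sided, $|\partial_a c_{n_k}|\asymp b^{n_k}$, so perturbing $a_k$ by $a_k'-a_k\asymp b^{-n_k}$ across the kneading bifurcation attached to this cylinder alters $\mclh$ by $\asymp 2^{-(r-1)n_k/r}\asymp (a_k'-a_k)^{\alpha(r,b)}$; hence $|\mclh(a_k')-\mclh(a_k)|/|a_k'-a_k|^\alpha\to\infty$ for every $\alpha>\alpha(r,b)$, i.e.\ $\mclh$ is not $\alpha$-H\"older on $J$.

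The hard part is the uniform control, over an entire window $[\bma_r,\bma_{r+1}]$, of the fair mass of deep cylinders — both the bound $\lesssim 2^{-(r-1)n/r}$ used in (1) and its sharpness used in (2) — because the critical orbit is genuinely recurrent for some parameters in a window (in the extreme, periodic through $c$), so there is no uniform lower bound on $\dist(c_n,c)$; each deep return must instead be balanced against the long sojourn it forces in the $2$-to-$1$ region $[c_2,\hc_2]$, and this bookkeeping must be organised precisely by the window index $r$ so as to produce the exponent $(r-1)/r$ and nothing smaller. This combinatorial balance is exactly what Proposition~\ref{prop:window} supplies; granting it, parts (1) and (2) follow from the transversality estimates above.
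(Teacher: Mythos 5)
Your architecture is the right one (depth of combinatorial agreement $n\asymp \log(1/|a-a'|)/\log b$, cylinder masses decaying like $2^{-(r-1)n/r}$ on $[\bma_r,2]$ via the separation of visits to $(\hc_2,c_1]$, optimality via parameters shadowing the period-$r$ orbit $q_r$), but the central quantitative step of part (1) has a genuine gap. You claim that if $f_a$ and $f_{a'}$ share combinatorics to depth $n$, then $\mu_a$ and $\mu_{a'}$ ``can differ only on the finitely many cylinders still undetermined at that depth.'' That is false: the mass of a \emph{determined} depth-$n$ cylinder $C$ is $\mu_a(C)=j_a^{(n)}(C)^{-1}\mu_a(f_a^nC)$, and the factor $\mu_a(f_a^nC)$ depends on the geometry and on the combinatorics at all depths, so determined cylinders also change mass as $a$ varies. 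The paper handles this by an exact telescoped identity (Lemma~\ref{lem:entropy difference}): on a common lap, $-A_{n,J}\bigl(\mclh(a)-\mclh(a')\bigr)=B_{n,J}\bigl(F_a(c_n(a))-F_{a'}(c_n(a'))\bigr)$ with $|B_{n,J}|=2^{-\Gamma_{n-1}}$, which isolates the error as a difference of distribution functions at the two critical images. Note that the trivial bound $|F|\le 1$ is useless here, because the maximal common depth $n$ satisfies only an \emph{upper} bound $n\lesssim \log(1/|a-a'|)/\log a$; one must beat $|B_{n,J}|$ by the factor $|F_{a_i}(\varphi_n(a))-F_{a_i}(\varphi_n(a_i))|\lesssim (a^n|a-a_i|)^{\alpha}$, i.e.\ one needs the uniform $\alpha(r,a)$-H\"older continuity of each individual distribution $x\mapsto F_a(x)$ (Proposition~\ref{prop:fair meas holder}). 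That proposition is the technical heart of the proof and is \emph{not} the ``fair mass of deep cylinders'' bound you flag as the hard part: bounding $\mu_a(C)\le 2^{-\Gamma}$ for a cylinder is immediate from conformality, whereas bounding $\mu_a(J)\lesssim |J|^{\alpha}$ for an arbitrary interval $J$ (which may sit deep inside one cylinder or straddle a critical preimage) requires the induction-on-scales argument of Lemma~\ref{lem:one-step}. Your proposal never engages with this, and without it the exponent $\alpha(r,b)$ cannot be extracted.

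For part (2) your plan matches the paper's in spirit (the paper uses the sets $\mscq_r$ of parameters whose critical orbit lands on $q_r(a)$, dense in $(\bma_r,\bma_{r+1})$, and computes the pointwise exponent there via Theorem~\ref{thm:pointwise}), but the asserted two-sided estimate ``perturbing by $\asymp b^{-n_k}$ alters $\mclh$ by $\asymp 2^{-(r-1)n_k/r}$'' again hides the hard half: the \emph{lower} bound requires (i) non-degeneracy of the coefficient $A_{n,J}+\eta B_{n,J}$ in the identity above and (ii) a reverse-H\"older bound $\mu_a(J)\ge C|J|^4$ (Lemma~\ref{lem:inverse holder}) to show that $F_a(\varphi_n(a))-F_a(\varphi_n(a'))$ is not too small; neither is contained in Proposition~\ref{prop:window}, to which you attribute the matching lower bound. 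So the proposal is a correct outline at the level of exponents and combinatorics, but both the upper bound in (1) and the lower bound in (2) rest on regularity properties of the maps $x\mapsto F_a(x)$ that are asserted rather than proved.
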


 Our second main result is Theorem~\ref{thm:pointwise} below, motivated by a parallel (but much deeper) result of Dobbs and Mihalache \cite{DM19} recently on topological entropy of the quadratic family. Let us introduce some notations first. Define the {\bf pointwise H\"{o}lder exponent} of $\mclh$ as follows, provided that the limit on right hand side (RHS for short) exists:
\begin{equation}\label{eq:point holder exp def}
  \beta(a):=\lim_{b\to a}\frac{\log|\mclh(b)-\mclh(a)|}{\log|b-a|}.
\end{equation}
For each $n\ge 1$, denote
 \begin{equation}\label{eq:return count}
    \Gamma_n(a):=\#\{1\le k\le n : c_k(a) <\hc_2(a)\}.
 \end{equation}
$\beta$ is closely related to the quantity $\gamma$ defined below, provided that the limit on RHS exists:
\begin{equation}\label{eq:point holder exp def}
  \gamma(a):=\lim_{n\to\infty}\frac{1}{n}\Gamma_n(a)\in [0,1] .
\end{equation}


\begin{thm}\label{thm:pointwise}
  There exists a Borel set  $\slowrec\subset [\sqrt{2},2]$ (specified in Definition~\ref{def:slow rec}) of full Lebesgue measure in  $[\sqrt{2},2]$  satisfying the properties below.
  \begin{enumerate}
    \item If $a\in \slowrec$, then $\beta(a)$ is well-defined iff $\gamma(a)$ is well-defined, and they are related by

  \begin{equation}\label{eq:pointwise holder}
    \beta(a)=\frac{\log 2}{\log a}\cdot \gamma(a)\,.
  \end{equation}
  \item Let $\dom(\gamma)$ denote the collection of $a$ such that $\gamma(a)$ is well-defined. The following hold.
  \begin{itemize}
    \item $\slowrec\cap \{a\in\dom(\gamma) : \gamma(a)=1\}$ is dense in $[\sqrt{2},2]$.
    \item For each $r\ge 2$, $\gamma\ge \frac{r-1}{r}$ on $\dom(\gamma)\cap (\bma_r,\bma_{r+1})$, and $\slowrec\cap \{a\in\dom(\gamma) : \gamma(a)=\frac{r-1}{r}\}$ is dense in $(\bma_r,\bma_{r+1})$.
  \end{itemize}

  \end{enumerate}

\end{thm}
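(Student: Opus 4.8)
The plan is to tie the pointwise regularity of $\mclh$ to the divergence of critical orbits and then read off the combinatorics. We shall use throughout the identity $\mclh(a)=\tfrac12\,\mu_a(D_a)$, where $D_a:=[c_2(a),\hc_2(a)]$, which follows from \eqref{eq:fair function} and the entropy identity $h_{\mu_a}(f_a)=\int\log j_a\,\dif\mu_a=\log 2\cdot\mu_a(D_a)$ contained in \eqref{eq:fair var}: up to the constant $\tfrac12$, $\mclh(a)$ is the $\mu_a$-mass of the ``doubling region'' $D_a$, and the task is to control how this mass reacts to a perturbation of the parameter. Since $f_a$ has constant slope $a$, the kneading data of $f_a$ and of a nearby $f_b$ coincide up to some level $N=N(b)$, and the core of the argument will be two estimates, valid for $a\in\slowrec$ as $b\to a$: \emph{(i)} $N(b)=\frac{\log|b-a|^{-1}}{\log a}(1+o(1))$; and \emph{(ii)} $|\mclh(b)-\mclh(a)|=2^{-\Gamma_{N(b)}(a)(1+o(1))}$. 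Granting these, $\frac{\log|\mclh(b)-\mclh(a)|}{\log|b-a|}=\frac{\log 2}{\log a}\cdot\frac{\Gamma_{N(b)}(a)}{N(b)}+o(1)$; as $b\to a$, $N(b)$ ranges over a subsequence of the integers with gaps $o(N)$, so — since $\Gamma_{n+1}(a)-\Gamma_n(a)\in\{0,1\}$ — the limit over $b$ exists precisely when $\gamma(a)=\lim_n\tfrac1n\Gamma_n(a)$ does, with value $\beta(a)=\frac{\log 2}{\log a}\gamma(a)$ in that case. That is assertion~(1).

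Estimate \emph{(i)} is the elementary parameter-side input. For $a\in\slowrec$ (Definition~\ref{def:slow rec}) one has transversality, $|\partial_a c_n(a)|=a^{\,n(1+o(1))}$, and subexponential recurrence, $|c_n(a)|\ge a^{-o(n)}$; a routine distortion estimate for the recursion $c_{n+1}(a)=1-a|c_n(a)|$ then gives $|c_n(b)-c_n(a)|=|b-a|\,a^{\,n(1+o(1))}$ as long as $|b-a|\,a^{n}$ stays bounded, and comparing this at $n=N(b)$ — the first $n$ for which $c_n(b)$ and $c_n(a)$ lie on opposite sides of $c$ — with the distance, at least $a^{-o(N)}$, of $c_N(a)$ from $c$ yields \emph{(i)}; the same exclusion scheme, carried out in the parameter, also gives the full-measure claim for $\slowrec$, by a Benedicks--Carleson-type argument that is much simpler here because tent maps are uniformly expanding. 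Estimate \emph{(ii)} is the crux. Since $f_a$ is uniformly expanding and $-\log j_a$ is of bounded variation, the transfer operator $\mcll_a\phi(y)=\sum_{f_ax=y}\phi(x)/j_a(x)$ has a spectral gap on $BV$ and $\mu_a$ is its unique eigen-measure (the pressure being $0$); when the kneading data of $f_a$ and $f_b$ agree up to level $N$, the two maps carry a common Hofbauer/Markov structure truncated at level $N$, so $\mu_a$ and $\mu_b$ differ only through the mass that has escaped above that level, and by conformality of $\mu_a$ along the critical orbit the $\mu_a$-mass of the depth-$N$ cylinder at $c$ equals $2^{-\Gamma_N(a)}$ times the mass of its (definite-size, up to a factor $a^{-o(N)}$) image — this is where subexponential recurrence is used — so the escaped mass, hence the upper bound for $|\mclh(b)-\mclh(a)|$, is $2^{-\Gamma_N(a)(1+o(1))}$. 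The matching lower bound uses strict monotonicity of $\mclh$ (\cite[Theorem~5.13]{MR18}): a genuine disagreement of the kneading data exactly at level $N$ moves the fixed point of $\mcll_a^{*}$, hence $\mclh$, by an amount of the same order $2^{-\Gamma_N(a)}$, which monotonicity prevents from being cancelled at deeper levels. Making this two-sided multiplicative control uniform in $a\in\slowrec$ is the step I expect to be the main obstacle.

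Assertion~(1) already yields the a.e.\ vanishing of $\mclh'$ announced in the introduction. Applying the variational inequality \eqref{eq:fair var} to the ergodic absolutely continuous invariant probability measure $\nu_a$ of $f_a$ and using Rokhlin's formula $h_{\nu_a}(f_a)=\int\log|Df_a|\,\dif\nu_a=\log a$, we get $\log 2\cdot\nu_a(D_a)=\int\log j_a\,\dif\nu_a\ge\log a$, with equality only if $\nu_a=\mu_a$; but for $a\in(\sqrt2,2)$ an invariant measure supported on a periodic orbit contained in $(\hc_2(a),1]$ assigns $D_a$ a mass different from $\nu_a(D_a)$, so $-\log j_a$ is not cohomologous to a constant and $\mu_a\neq\nu_a$. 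Hence for Lebesgue-a.e.\ $a$ — namely, those $a\in\slowrec$ at which the critical orbit equidistributes for $\nu_a$, so that $\gamma(a)=\nu_a(D_a)$ is well-defined, a full-measure set — we get $\beta(a)=\frac{\log 2}{\log a}\gamma(a)=\frac{1}{\log a}\int\log j_a\,\dif\nu_a>1$, whence $\bigl(\mclh(b)-\mclh(a)\bigr)/(b-a)=|b-a|^{\,\beta(a)-1+o(1)}\to 0$ and $\mclh'(a)=0$.

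It remains to prove assertion~(2). The inequality $\gamma\ge\frac{r-1}{r}$ on $\dom(\gamma)\cap(\bma_r,\bma_{r+1})$ follows from assertion~(1) together with Theorem~\ref{thm:interval}(1): the latter makes $\mclh$ H\"older of exponent $\alpha(r,b)=\frac{(r-1)\log 2}{r\log b}$ on $[\bma_r,b]$ for every $b\in(a,\bma_{r+1}]$, whence $\beta(a)\ge\alpha(r,b)$, and letting $b\downarrow a$ gives $\frac{\log 2}{\log a}\gamma(a)=\beta(a)\ge\frac{(r-1)\log 2}{r\log a}$. For the density statements, note that $\slowrec$ contains every Misiurewicz parameter, its critical orbit being bounded away from $c$. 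If $c_n(a)=p_a:=\frac{1}{1+a}$, the orientation-reversing fixed point, for some $n$, then the critical orbit is eventually equal to $p_a$, and as $p_a<\hc_2(a)$ precisely when $a>\sqrt2$, one gets $c_k(a)<\hc_2(a)$ for all large $k$, i.e.\ $\gamma(a)=1$; the set of such $a$ is dense in $[\sqrt2,2]$, giving density of $\slowrec\cap\{\gamma=1\}$. Finally, in the window $(\bma_r,\bma_{r+1})$ one has $f_a^{-1}[\hc_2(a),1]=\bigl[-\tfrac{2-a}{a},\tfrac{2-a}{a}\bigr]=:V_a$, a neighbourhood of $c$, so a visit of the critical orbit to $(\hc_2(a),1]$ at time $k$ is a visit to $V_a$ at time $k-1$; the combinatorial description of the windows in Proposition~\ref{prop:window} forces consecutive visits of the critical orbit to $V_a$ to be at least $r$ iterates apart, so that $\Gamma_n(a)\ge\frac{r-1}{r}n-O(1)$, and realising the extremal pattern — visits to $V_a$ exactly every $r$ iterates, produced by a suitable $r$-periodic kneading sequence lying in the window — gives, by density of such Misiurewicz parameters, points of $\slowrec$ with $\gamma=\frac{r-1}{r}$ dense in $(\bma_r,\bma_{r+1})$.
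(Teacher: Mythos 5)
Your architecture for assertion (1) is the right one --- reduce to the level $N(b)\sim \log|b-a|^{-1}/\log a$ up to which the combinatorics of $a$ and $b$ agree, and show $|\mclh(b)-\mclh(a)|=2^{-\Gamma_{N}(a)(1+o(1))}$ --- but the second of these two estimates, which is the entire content of the theorem, is not proved. The route you propose (spectral gap of the transfer operator on $BV$, a truncated Hofbauer structure, and perturbation of the eigenmeasure) does not deliver a \emph{two-sided} multiplicative bound of the form $2^{-\Gamma_N(a)(1+o(1))}$: perturbation theory for transfer operators whose discontinuities and domain move with the parameter gives at best a fixed H\"older-type modulus of continuity in a weak norm, i.e.\ an upper bound with an exponent that does not see $\Gamma_N(a)$; and the proposed lower bound --- ``monotonicity of $\mclh$ prevents cancellation at deeper levels'' --- is not an argument, since monotonicity gives no lower bound whatsoever on $|\mclh(b)-\mclh(a)|$ for a \emph{given} pair $a,b$. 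You flag this yourself as ``the main obstacle''; it is not an obstacle to be smoothed over later but the missing mechanism. The paper obtains both bounds from an exact algebraic identity: iterating the fixed-point equation for the distribution function $F_a$ of $\mu_a$ and evaluating at $c$ gives \eqref{eq:fair entropy expansion}, whence (Lemma~\ref{lem:entropy difference}) for $a,b$ in a common lap of $\varphi_n$ one has $-A\cdot(\mclh(a)-\mclh(b))=B\cdot(F_a(c_n(a))-F_b(c_n(b)))$ with $|B|=2^{-\Gamma_{n-1}(a)}$ \emph{exactly} and $|A|$ bounded between absolute constants. The whole problem then reduces to showing that the remainder $F_a(c_n(a))-F_b(c_n(b))$ is subexponential in $n$ from above \emph{and below}, which is done by choosing $b$ to be a lap endpoint (legitimate by monotonicity of $\mclh$ --- this is where monotonicity is actually used), invoking the two-sided H\"older-type bounds $\mu_a(J)\lesssim|J|^{1/2}$ and $\mu_a(J)\gtrsim|J|^{4}$ (Proposition~\ref{prop:fair meas holder} and Lemma~\ref{lem:inverse holder}), and using the defining property of $\slowrec$ that the lap endpoints lie at distance at least $(\theta a^{-1})^n$, so that $|c_n(a)-c_n(b)|\gtrsim\theta^n$ by \eqref{eq:distortion}. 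Without this identity (or a genuine substitute producing the factor $2^{-\Gamma_n}$ exactly and a controllable remainder), your proof of \eqref{eq:pointwise holder} does not close.

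Two smaller points. First, your derivation of $\gamma\ge\frac{r-1}{r}$ from Theorem~\ref{thm:interval}(1) via $\beta(a)\ge\alpha(r,b)$ only applies on $\slowrec\cap\dom(\gamma)$, whereas the statement is for all of $\dom(\gamma)\cap(\bma_r,\bma_{r+1})$; fortunately your second, combinatorial argument (consecutive visits to $(\hc_2(a),c_1]$ are at least $r$ apart, by Proposition~\ref{prop:window}) covers the general case and is exactly the paper's. Second, the density statements in assertion (2) are asserted rather than proved: density of parameters with $c_n(a)=q_1(a)$ is a quotable fact (Lemma~\ref{lem:prefix dense}), but density in $(\bma_r,\bma_{r+1})$ of parameters whose critical orbit lands on the orientation-reversing period-$r$ point needs an argument (the paper's Corollary~\ref{cor:preperiodic dense} gives a short intermediate-value proof); also, membership of such parameters in $\slowrec$ requires the eventual orbit to avoid $\hc_2(a)$ as well as $c$, since laps are cut at both, so ``every Misiurewicz parameter lies in $\slowrec$'' is slightly too strong as stated.
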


 It is well known that $f_a:I_a\to I_a$,  $a\in[\sqrt{2},2]$ admits a unique a.c.i.p. $\nu_a$ supported on $I_a$, which is also the unique measure of maximal entropy, i.e. $h_{\nu_a}(f_a)=\log a =h_{top}(f_a)$. Following Bruin \cite{Br98}, a parameter $a$ is called {\bf typical}, if for any bounded and Lebesgue-a.e.  continuous test function $g:I_a\to \mbbr$, the time average of $g$ along critical orbit of $f_a$ exists and coincides with its phase average w.r.t. $\nu_a$, i.e.
\begin{equation}\label{eq:typical}
  \lim_{n\to\infty}\frac{1}{n}\sum_{k=0}^{n-1} g(c_k(a)) =\int_{I_a}g\dif\nu_a \,.
\end{equation}
Denote the collection of typical parameters by $\typical$. According to Bruin~\cite{Br98}, $\typical$ is of full Lebesgue measure in $[\sqrt{2},2]$.

\begin{cor}
  If $a\in\slowrec\cap\typical$\,, then $\beta(a)>1$ is well-defined and $\mclh'(a)=0$ consequently. In particular, $\mclh'(a)=0$ for Lebesgue almost every $a\in [\sqrt{2},2]$, and hence $\mclh$ is not absolutely continuous on any subinterval of $[\sqrt{2},2]$.
\end{cor}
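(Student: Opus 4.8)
The plan is to combine Theorem~\ref{thm:pointwise}(1) with the variational characterization \eqref{eq:fair var} of the fair measure. First I would show that every typical parameter lies in $\dom(\gamma)$ and identify $\gamma$ there. Fix $a\in\typical$ and apply \eqref{eq:typical} to the bounded test function $g=\mathbbm 1_{[1-a,\,\hc_2(a))}$ on $I_a$, which is continuous except at the single point $\hc_2(a)$ and hence Lebesgue-a.e.\ continuous. Since $c_k(a)\in I_a=[1-a,1]$ for every $k\ge1$, we have $g(c_k(a))=1$ precisely when $c_k(a)<\hc_2(a)$, so $\sum_{k=0}^{n-1}g(c_k(a))$ and $\Gamma_n(a)$ (see \eqref{eq:return count}) differ by at most $2$; dividing by $n$ and letting $n\to\infty$ shows that $\gamma(a)$ is well-defined and
\[
  \gamma(a)=\nu_a([1-a,\hc_2(a)))=\nu_a([c_2(a),\hc_2(a)]),
\]
the last equality because $\nu_a$ is non-atomic. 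Consequently, for $a\in\slowrec\cap\typical$ Theorem~\ref{thm:pointwise}(1) applies, $\beta(a)$ is well-defined, and $\beta(a)=\frac{\log 2}{\log a}\,\gamma(a)$.

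Next I would prove the strict inequality $\gamma(a)>\frac{\log a}{\log 2}$, which gives $\beta(a)>1$. By the definition of $j_a$, $\int\log j_a\,d\nu_a=(\log 2)\,\nu_a([c_2(a),\hc_2(a)])=(\log 2)\,\gamma(a)$; taking $\nu=\nu_a$ in \eqref{eq:fair var} and using $h_{\nu_a}(f_a)=\log a$ yields $0\ge\log a-(\log 2)\gamma(a)$, i.e.\ $\gamma(a)\ge\frac{\log a}{\log 2}$, and the uniqueness clause in \eqref{eq:fair var} shows the inequality is strict unless $\nu_a=\mu_a$. So it remains to exclude $\nu_a=\mu_a$. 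Note first that $\slowrec\cap\typical\subseteq(\sqrt2,2)$: at $a=\sqrt2$ and at $a=2$ the critical orbit is eventually a fixed point, so these parameters are not typical. For $a\in(\sqrt2,2)$, the fixed point $q_a:=\frac1{a+1}$ lies in the interior of $[c_2(a),\hc_2(a)]$ (indeed $q_a<a-1\Leftrightarrow a>\sqrt2$, while $q_a>0>1-a$), so for all sufficiently small $r>0$ the map $f_a$ is injective on $B(q_a,r):=(q_a-r,q_a+r)$, with $f_a(B(q_a,r))=B(q_a,ar)$ and $j_a\equiv2$ on $B(q_a,r)$; then \eqref{eq:fair conformal} forces $\mu_a(B(q_a,ar))=2\,\mu_a(B(q_a,r))$, and iterating downward in $r$ gives $\mu_a(B(q_a,r))\asymp r^{\log2/\log a}$ as $r\to0$. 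On the other hand, if $\nu_a=\mu_a$ then $\mu_a$ is absolutely continuous with a density of bounded variation whose only discontinuities lie on the critical orbit; since $a$ is typical, $q_a$ is not on the critical orbit (otherwise the critical orbit would be eventually fixed), so that density is continuous and positive at $q_a$, giving $\mu_a(B(q_a,r))\asymp r$. Comparing the two asymptotics forces $\log2/\log a=1$, i.e.\ $a=2$, contradicting $a<2$. Hence $\gamma(a)>\frac{\log a}{\log 2}$ and $\beta(a)>1$ for every $a\in\slowrec\cap\typical$.

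Finally, $\beta(a)>1$ implies $\mclh'(a)=0$: fixing $\beta'\in(1,\beta(a))$, the definition of the pointwise H\"older exponent together with the injectivity of $\mclh$ (so that $\mclh(b)\ne\mclh(a)$ for $b$ near but distinct from $a$, and $\log|b-a|<0$) gives $|\mclh(b)-\mclh(a)|\le|b-a|^{\beta'}$ for $b$ close enough to $a$, whence $|\mclh(b)-\mclh(a)|/|b-a|\le|b-a|^{\beta'-1}\to0$. Since $\slowrec$ (Theorem~\ref{thm:pointwise}) and $\typical$ (Bruin) both have full Lebesgue measure in $[\sqrt2,2]$, so does $\slowrec\cap\typical$, and therefore $\mclh'=0$ Lebesgue-a.e. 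If $\mclh$ were absolutely continuous on some nondegenerate subinterval $[c,d]$, we would obtain $\mclh(d)-\mclh(c)=\int_c^d\mclh'\,dx=0$, contradicting that $\mclh$ is strictly increasing by \cite[Theorem~5.13]{MR18}. The main obstacle in this plan is precisely the strict inequality $\gamma(a)>\frac{\log a}{\log 2}$: the variational principle only yields ``$\ge$'', and upgrading it to ``$>$'' requires genuinely ruling out the coincidence of the fair measure with the absolutely continuous invariant probability for $a\in(\sqrt2,2)$, via the local-scaling argument above (equivalently, via a cohomological argument that $\log j_a$ is not cohomologous to a constant on $(\sqrt2,2)$).
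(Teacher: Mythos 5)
Your proof follows essentially the same route as the paper's: typicality applied to the indicator of $[c_2,\hc_2)$ identifies $\gamma(a)=\nu_a([c_2,\hc_2])$, the variational principle \eqref{eq:fair var} together with uniqueness of the equilibrium state upgrades $\gamma(a)\ge\frac{\log a}{\log 2}$ to a strict inequality, and Theorem~\ref{thm:pointwise}(1) then gives $\beta(a)>1$, whence $\mclh'(a)=0$ a.e.\ and non-absolute continuity via strict monotonicity. The one place where you go beyond the paper is the justification of $\nu_a\ne\mu_a$ (which the paper simply asserts): your local-scaling comparison at the fixed point $q_1(a)=\frac{1}{1+a}\in(c,\hc_2(a))$, pitting the conformal scaling $\mu_a(B(q_1(a),r))\asymp r^{\log 2/\log a}$ against the linear scaling of an absolutely continuous measure, is a correct way to close this gap, modulo the standard (but here uncited) facts that the a.c.i.p.\ density of a mixing tent map is of bounded variation, continuous off the critical orbit, and bounded below on its support.
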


\begin{proof}
   Given $a\in\slowrec\cap\typical$, substituting $g=1_{[c_2,\hc_2)}$ into \eqref{eq:typical} yields that $\gamma(a)=\lim\limits_{n\to\infty}\frac{1}{n}\Gamma_n(a)=\nu_a([c_2,\hc_2])$ exists. Since $\nu_a\ne \mu_a$, substituting $\nu=\nu_a$  into the variational principle \eqref{eq:fair var}, the uniqueness of equilibrium state implies that $\nu_a([c_2,\hc_2])>\frac{\log a}{\log 2}$.  Then by the first assertion in Theorem~\ref{thm:pointwise}, $\beta(a)>1$ is well-defined. The rest are clear.
\end{proof}

As direct application of our main results, we can say something more about dynamical or geometric properties of the fair measure $\mu_a$ and/or associated fair entropy $h_{\mu_a}(f_a)$. The following is such an example. Basic knowledge in dimension theory asserts that $\mu_a$ is exact dimensional with Hausdorff dimension $\dim_H(\mu_a)=\frac{h_{\mu_a}(f_a)}{\log a}$, i.e. $\lim\limits_{\delta\to 0^+}\frac{\log \mu_a \big((x-\delta,x+\delta)\big)}{\log \delta}=\frac{h_{\mu_a}(f_a)}{\log a}$ for $\mu_a$-a.e. $x\in I_a$. Then  Theorem~\ref{thm:interval} is still valid for the function $a\mapsto \dim_H(\mu_a)$ instead of $\mclh$. On the other hand, as an immediate corollary of Theorem~\ref{thm:pointwise}, we have:

\begin{cor}
  The function $a\mapsto \dim_H(\mu_a)$ is not monotone on any subinterval of $[\sqrt{2},2]$.
\end{cor}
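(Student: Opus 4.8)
The plan is to derive the non-monotonicity of $a\mapsto\dim_H(\mu_a)=\mclh(a)\cdot\frac{2\log 2}{\log a}$ from the fact established in Theorem~\ref{thm:pointwise} that $\gamma$ (hence $\mclh$, hence $\dim_H(\mu_a)$) takes wildly oscillating values on arbitrarily small scales. First I would observe that on each window $(\bma_r,\bma_{r+1})$ Theorem~\ref{thm:pointwise}(2) provides a dense set of parameters in $\slowrec\cap\dom(\gamma)$ with $\gamma(a)=\frac{r-1}{r}$ and also a dense set with $\gamma(a)=1$. Via the relation $\mclh(a)=\frac{\log a}{2\log 2}\cdot\nu_a([c_2,\hc_2])$ (which holds for $a\in\typical$, and more relevantly via the conformal/equilibrium characterization one has the general identity $\mclh(a)=\frac{h_{\mu_a}(f_a)}{2\log 2}$ with $\dim_H(\mu_a)=\frac{h_{\mu_a}(f_a)}{\log a}=\frac{2\log 2}{\log a}\mclh(a)$), I would instead argue directly at the level of $\dim_H(\mu_a)$ using continuity of $\mclh$ from \cite{MR18} together with the oscillation of $\gamma$.

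The cleaner route, which I would take, is: fix any subinterval $J\subset[\sqrt 2,2]$; it meets some window $(\bma_r,\bma_{r+1})$ in a nonempty open subinterval $J'$. On $J'$, Theorem~\ref{thm:pointwise}(2) gives three parameters $a_1<a_2<a_3$ in $J'$ with $\gamma(a_1)=\gamma(a_3)=\frac{r-1}{r}$ but $\gamma(a_2)=1$ (density lets us interleave them in any prescribed order). Now I would argue that $\dim_H(\mu_{a_i})$ is a strictly increasing function of $\gamma(a_i)$ for fixed-ish parameter, or more carefully: since all three parameters lie in the short interval $J'$, $\log a$ varies little, whereas the jump in $\nu_a([c_2,\hc_2])$ forced by $\gamma$ passing from $\frac{r-1}{r}$ to $1$ is bounded below by a definite amount. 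Concretely, for $a\in\slowrec\cap\typical$ one has $\dim_H(\mu_a)=\frac{h_{\mu_a}(f_a)}{\log a}$ and by the variational inequality $\frac{h_{\mu_a}(f_a)}{\log 2}<\nu_a([c_2,\hc_2])=\gamma(a)$; combined with the Hölder continuity of $\mclh$ (hence of $\dim_H(\mu_\cdot)$) from Theorem~\ref{thm:interval} and the explicit formula \eqref{eq:pointwise holder} for the pointwise exponent, the value $\dim_H(\mu_a)$ cannot be locally monotone near any $a$ where $\gamma$ oscillates between $\frac{r-1}{r}$ and $1$ on every neighborhood. I would make this precise by noting that monotonicity of $a\mapsto\dim_H(\mu_a)$ on $J'$ would, together with $\dim_H(\mu_a)=\frac{2\log 2}{\log a}\mclh(a)$ and monotonicity/continuity of $\log a$, force $\mclh$ to be monotone on a slightly smaller interval, and then that $\beta(a)$ (where defined) would be $\le 1$ at points of increase — contradicting that \eqref{eq:pointwise holder} gives $\beta(a)=\frac{\log 2}{\log a}\gamma(a)>1$ at the dense set of $\slowrec\cap\typical$ parameters with $\gamma(a)$ close to $1$, while giving strictly smaller exponents at the dense set with $\gamma(a)=\frac{r-1}{r}$, so the increments of $\mclh$ scale inconsistently with any single-signed monotone behavior.

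The main obstacle is bookkeeping the conversion from statements about $\mclh$ (and its pointwise Hölder exponent $\beta$) to statements about $\dim_H(\mu_a)=\frac{2\log 2}{\log a}\mclh(a)$: since $\frac{2\log 2}{\log a}$ is itself a smooth strictly decreasing function of $a$, one must check that it does not conspire to cancel the oscillation of $\mclh$. This is harmless because $\frac{d}{da}\log a=\frac1a$ is bounded and the factor $\frac{2\log 2}{\log a}$ is $C^\infty$ with bounded derivatives on any compact subinterval of $(\sqrt2,2]$, so products with a nowhere-locally-monotone, nowhere-absolutely-continuous, Hölder function remain nowhere locally monotone; I would phrase the final contradiction so that it only uses (i) continuity of $\mclh$, (ii) the denseness statements in Theorem~\ref{thm:pointwise}(2) for the two values $\frac{r-1}{r}$ and $1$ of $\gamma$, and (iii) the corollary above that $\beta(a)>1$ on the full-measure set $\slowrec\cap\typical$, thereby showing $\mclh$ — and hence $\dim_H(\mu_\cdot)$ — cannot be monotone on $J$.
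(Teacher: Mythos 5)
Your proposal assembles the right raw ingredients (the dense sets of parameters with $\gamma=\frac{r-1}{r}$ and with $\gamma=1$, the formula $\beta(a)=\frac{\log 2}{\log a}\gamma(a)$, and the factorization $\dim_H(\mu_a)=\frac{2\log 2}{\log a}\,\mclh(a)$), but the step that is supposed to produce the contradiction is broken. The implication ``monotonicity would force $\beta(a)\le 1$ at points of increase'' is false: a strictly increasing function can have pointwise H\"older exponent $>1$ at a point (e.g.\ $x\mapsto x^3$ at $0$), and indeed $\mclh$ itself is strictly increasing on all of $[\sqrt{2},2]$ while $\beta>1$ Lebesgue-a.e. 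For the same reason your closing claim that $\mclh$ is ``nowhere-locally-monotone'' is wrong --- $\mclh$ is an increasing homeomorphism by \cite{MR18} --- so the mere oscillation of $\beta$ between values $<1$ and $>1$ contradicts nothing by itself. Separately, the attempt to compare the \emph{values} $\dim_H(\mu_{a_1}),\dim_H(\mu_{a_2}),\dim_H(\mu_{a_3})$ via a ``jump in $\nu_a([c_2,\hc_2])$'' confuses the discontinuous function $\gamma$ with the continuous function $a\mapsto\dim_H(\mu_a)$: knowing $\gamma(a_i)$ constrains the local H\"older exponent of $\mclh$ at $a_i$, not the value of $\dim_H(\mu_{a_i})$, and $\nu_a$ is the measure of maximal entropy, not $\mu_a$.

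What the corollary actually needs is a two-sided argument exploiting the smooth, strictly decreasing factor $g(a):=\frac{2\log 2}{\log a}$. Write $D=g\,\mclh$ and fix a subinterval $J$, which meets some window $(\bma_r,\bma_{r+1})$ in an open set. (i) At a parameter with $\beta(a)>1$ (the dense set with $\gamma(a)=1$, or a.e.\ $a$ by the first corollary) one has $\mclh(b)-\mclh(a)=o(|b-a|)$, hence $D(b)-D(a)=\mclh(a)\bigl(g(b)-g(a)\bigr)+o(|b-a|)<0$ for $b>a$ close to $a$, because $g'\le -\frac{1}{\log 2}$ and $\mclh\ge\frac14$; so $D$ is nondecreasing on no subinterval. (ii) At a parameter with $\beta(a)<1$ --- the dense set with $\gamma(a)=\frac{r-1}{r}$, for which $\beta(a)=\alpha(r,a)<1$ on the open window by the remark following Proposition~\ref{prop:window} --- monotonicity of $\mclh$ gives $\mclh(b)-\mclh(a)\ge |b-a|^{\beta(a)+\ve}\gg |b-a|$ for $b>a$ close to $a$, which dominates the Lipschitz decrease of $g$, so $D(b)>D(a)$; hence $D$ is nonincreasing on no subinterval. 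Your write-up contains neither (i) nor (ii) in a valid form; in particular it never gives a correct reason why $D$ cannot be monotone \emph{decreasing}, which is the nontrivial half given that $g$ is decreasing and $\mclh$ is increasing.
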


The paper is organized as follows. In \S~\ref{se:fair}  we discuss fair measure and fair entropy for unimodal maps and apply to tent maps. In \S~\ref{se:holder}, we focus on H\"{o}lder continuity of $\mclh$ on intervals: the first assertion in Theorem~\ref{thm:interval} is proved in \S~\ref{sse:interval proof}; as a by-product, H\"{o}lder continuity of individual distribution function of fair measure is proved in \S~\ref{sse:distribution holder}.  In \S~\ref{se:pointwise}, we mainly discuss pointwise H\"{o}lder exponents of $\mclh$: the first assertion in Theorem~\ref{thm:pointwise} is proved in \S~\ref{sse:pointwise proof}; the second assertion in Theorem~\ref{thm:pointwise} is proved in \S~\ref{sse:extremal} and the second assertion in Theorem~\ref{thm:interval} follows as a corollary.

\section{Fair measure and fair entropy of unimodal maps}\label{se:fair}

In \cite[\S~5]{MR18}, it is proved that for piecewise monotone interval map $f:I\to I$, if $f$ is surjective and topologically mixing, then it admits a unique fair measure. We restrict our discussion to unimodal case and elaborate more details on this result below.

\subsection{General discussion}\label{sse:general}
Let $I=[c_2,c_1]$ be a closed interval and let $f:I\to I$ be continuous. By saying that $f:I\to I$ is a {\bf mixing unimodal map}, we mean:
\begin{itemize}
  \item There exists $c$ in the interior of $I$, which is the unique critical (or turning) point of $f$,  such that $f$ is strictly increasing on $[c_2,c]$ and strictly decreasing on $[c,c_1]$.
  \item $f(c)=c_1$ and $f(c_1)=c_2$. In particular, $f$ is surjective on $I$.
  \item $f:I\to I$ is topologically mixing.
\end{itemize}

Given a mixing unimodal map $f:I\to I$, denote $c_n:=f^n(c)$ for $n\ge 0$ and denote $f^{-1}c_3=\{c_2,\hc_2\}$. An atomless\footnote{The atomless assumption is not included in the definition of fair measure in \cite{MR18}, but it turns out that following the definition in \cite{MR18},  fair measures we are discussing here are always atomless. Therefore we may merge the atomless assumption into definition for convenience.} Borel probability measure $\mu$ on $I$ is called a {\bf fair measure} of $f$, if the following holds for any Borel set $E\subset I$:
\[
\mu(E) = \frac{1}{2} \cdot \mu\Big(f\big(E\cap [c_2,c]\big) \Big) +   \frac{1}{2} \cdot \mu\Big(f\big(E\cap [c,\hc_2]\big) \Big) +  \mu\Big(f\big(E\cap [\hc_2,c_1]\big) \Big).
\]
In other words, $\mu$ is an atomless conformal measure of $f$ with respect to Jacobian $j_f$ in the following sense (as mentioned for tent maps at the beginning of \S~\ref{se:intro}):

\[
\text{$f$ is injective on $E$} \implies \mu(fE)=\int_E j_f\dif \mu\,, \quad j_f(x):= \#f^{-1}(fx)= \left\{ 
\begin{array}{ccl}
    2 &,&   x\in [c_2,\hc_2]\setminus\{c\} \\
    1 &,&   x\in \{c\}\cup (\hc_2, c_1]
  \end{array}\right..
\]
In \cite[\S~5]{MR18} it is proved that for any mixing unimodal map $f:I\to I$, it admits a unique atomless fair measure $\mu$ supported on $I$, and $\mu$ is automatically $f$-invariant. The measure-theoretic entropy $h_\mu(f)$ of the fair measure $\mu$ is called the {\bf fair entropy} of $f$, which is an invariant under topological conjugacy.  It is easy to see that
\[
h_\mu(f) = \int_I j_f \dif\mu=\log 2 \cdot \mu \big([c_2,\hc_2]\big) = 2\log 2\cdot \mu\big([c_2,c]\big).
\]

Let us restate the definition of fair measure in terms of its distribution. To this end, let\footnote{In \cite{MR18}, the same operator $\Phi$ in essence is acted on space of measures rather than (signed) distributions. }
\[
\Phi: \{F:[c_2,c_1]\to\mbbr\mid F(c_2)=0, F\text{ is continuous and of bounded variation}\}\circlearrowleft
\]
be the linear operator defined by:
\begin{equation}\label{eq:transfer}
  \Phi F(x)=
\left\{
\begin{array}{ccl}
\frac{1}{2}\big(F(fx)-F(c_3)\big)          &,&  c_2\le x \le c    \\
F(c_1)-\frac{1}{2}\big(F(fx)+F(c_3)\big)   &,&  c \le x \le \hc_2 \\
F(c_1)-F(f x)  &,&  \hc_2 \le x \le c_1 \\
\end{array}
\right..
\end{equation}
Then the distribution function $x\mapsto \mu \big([c_2, x]\big)$, $x\in [c_2,c_1]$ of the fair measure $\mu$ is nothing but the unique fixed point $F$ of
$\Phi$ that satisfies $F(c_1)=1$. From now on let $F$ denote this distribution.   Then the relation $F=\Phi F$ can be expressed as:
\begin{equation}\label{eq:distribution recurrence}
  F = \xi \cdot (F \circ f -1) + H\cdot 1_{[c_2,\hc_2)}\,,
\end{equation}
where\footnote{Actually we do not care how to evaluate $\xi$ at its discontinuities $c$ and $\hc_2$; the only restriction is to make \eqref{eq:distribution recurrence} valid. }
\[
\xi(x)=
\left\{
\begin{array}{ccl}
\frac{1}{2}         &,&  c_2\le x \le c    \\
-\frac{1}{2} &,&  c < x < \hc_2 \\
 -1 &,&  \hc_2 \le x \le c_1 \\
\end{array}
\right.,
\]
and
\[
H=\frac{1-F(c_3)}{2}=F(c)=\frac{h_\mu(f)}{2\log 2}.
\]
Denote
\[
  \xi|_m^n:=\prod_{k=m}^{n-1}\xi\circ f^k
 \ \ \text{and}\ \  \xi|_m^m:=1, ~~~~0\le m<n\,.
\]
Iterating \eqref{eq:distribution recurrence} we obtain that:
\begin{equation}\label{eq:fair distribution iteration}
  F=  \xi|_0^n \cdot F\circ f^n - \sum_{k=1}^{n} \xi|_0^k + H\cdot \sum_{k=0}^{n-1} \xi|_0^k\cdot1_{[c_2,\hc_2)} \circ f^k  \,.
\end{equation}
Evaluating \eqref{eq:fair distribution iteration} at $x=c$, we have:
\[
H=F(c) = \xi|_0^n(c) \cdot F(c_n) - \sum_{k=1}^{n} \xi|_0^k(c) + H\cdot \sum_{k=0}^{n-1} \xi|_0^k(c)\cdot1_{[c_2,\hc_2)}(c_k) \,.
\]
It can be rewritten as:
\begin{equation}\label{eq:fair entropy expansion}
  - \sum_{k=1}^{n-1} \xi|_1^k(c)\cdot1_{[c_2,\hc_2)}(c_k) \cdot H =  \xi|_1^n(c) \cdot F(c_n) - \sum_{k=1}^{n} \xi|_1^k(c)\,.
\end{equation}

\begin{rmk}
From $f:I\to I$ being topologically mixing it is easy to see that $f([\hc_2,c_1])=[c_2,c_3]\subset [c_2,\hc_2)$. It follows that $|\:\!\xi|_1^{2n+1}(c)\:\!|\le 2^{-n}$ for each $n\ge 1$ and $0\ne \sum_{n=1}^{\infty} \xi|_1^n(c)\cdot 1_{[c_2,\hc_2)}(c_{n})\in (-2,-1)$. Then letting $n\to\infty$ in \eqref{eq:fair entropy expansion}, we obtain an explicit expression of the fair entropy below:
\begin{equation}\label{eq:fair series}
      \frac{h_\mu(f)}{2\log 2} = H = \frac{\sum_{n=1}^{\infty} \xi|_1^n(c)} {\sum_{n=1}^{\infty} \xi|_1^n(c)\cdot 1_{[c_2,\hc_2)}(c_{n}) }.
\end{equation}
The form of \eqref{eq:fair series} might suggest one considering to use a weighted version of the Milnor-Thurston kneading theory in \cite{MT} to study the fair entropy. Such a theory was developed by Baladi and Ruelle \cite{BR94}, and by Rugh and Tan \cite{RT15}. Unfortunately, we cannot see how to use this theory to simplify proofs at this moment. We take \eqref{eq:fair entropy expansion} rather than \eqref{eq:fair series} as the starting point of our argument.
\end{rmk}

\subsection{Applying to tent maps}

In this subsection we apply the discussion in \S~\ref{sse:general} to tent maps $f_a:I_a\to I_a$, $a\in(\sqrt{2},2]$, which are mixing unimodal maps. Recall that $c_n=c_n(a)=f_a^n(c)$. In particular, $c_0=c=0$, $c_1=1$ and $c_2(a)=1-a=-\hc_2(a)$.  Denote

\[
\ve_a(x)= -\,\sgn\,x =
\left\{\begin{array}{ccc}
1        &,&  c_2(a) \le x <c    \\
-1 &,&  c  < x \le c_1
\end{array}
\right.,
\quad \xi_a=\frac{\ve_a}{ j_a} =
\left\{
\begin{array}{ccc}
\frac{1}{2}         &,&  c_2(a) \le x <c    \\
-\frac{1}{2}   &,&  c < x < \hc_2(a) \\
-1 &,&  \hc_2(a) < x < c_1 \\
\end{array}
\right..
\]
In particular, for $\xi_a$ we consider its domain as
\[
\dom (\xi_a) =I_a\setminus\{c,\hc_2(a)\} = [c_2(a),c) \sqcup (c,\hc_2(a))\sqcup (\hc_2(a),c_1].
\]
Note that on $\dom (\xi_a)$, $\xi_a$ coincides with $\xi$ introduced in \S~\ref{sse:general} for $f=f_a$.

Following \cite{CKY} and \cite{BM96}, for each $n\ge 0$, the map $a\mapsto c_n(a)$ is also denoted by $\varphi_n$, especially when we want to emphasize how $c_n(a)$ changes as $a$ varies. 

\begin{defn}
  Given $n\ge 3$, a connected component of
  \[
  \{ a\in (\sqrt{2},2): \varphi_k(a)\in \dom(\xi_a), 3\le k< n\} = \{ a\in (\sqrt{2},2): \varphi_k(a)\ne c \text{ or } \hc_2(a),  3\le k< n\}
  \]
  is called a {\bf lap} of $\varphi_n$ (w.r.t. $\xi_a$). In particular, $(\sqrt{2},2)$ is the only lap of $\varphi_3$.
\end{defn}

Now let us apply the analysis in \S~\ref{sse:general} to tent maps. Recall that  the unique fair measure of $f_a$ is denoted by $\mu_a$,  and let  $F_a$  denote the distribution of $\mu_a$ from now on. Substituting $f=f_a$, $\xi=\xi_a$ (on $\dom(\xi_a)$) and $F=F_a$ into \eqref{eq:fair entropy expansion}, we obtain the following.

\begin{lem}\label{lem:entropy difference}
  Let $J\subset (\sqrt{2},2)$ be a lap of $\varphi_{n}$ for some $n\ge 3$. Then
 \[
 A_{n,J}:=\sum_{k=1}^{n-1} \xi_a|_1^k(c)\cdot 1_{[c_2,\hc_2)}(c_k(a))\,, \quad B_{n,J}:=\xi_a|_1^n(c)=\prod_{k=1}^{n-1}\xi_a(c_k(a))
 \]
 are constant on $J$,  and
  \begin{equation}\label{eq:fair distribution differ}
 - A_{n,J}\cdot\Big(\mclh(a)-\mclh(a')\Big)=B_{n,J} \cdot\Big( F_a(c_n(a)) - F_{a'}(c_n(a')) \Big)\,,\ \  \forall~  a,a'\in J.
  \end{equation}
Moreover,
\begin{itemize}
  \item if $n=3$, then  $A_{n,J}=-1$, $B_{n,J}=-\frac{1}{2}$;
  \item if $n\ge 4$, then  $-2<A_{n,J}<-1$ and $|B_{n,J}|= 2^{-\Gamma_{n-1}(a)}\le \frac{1}{4}$ for any $a\in J$.
\end{itemize}
\end{lem}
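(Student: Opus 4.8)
The plan is to read everything off from the identity \eqref{eq:fair entropy expansion} specialized to $f=f_a$, after checking that the quantities appearing there do not vary along a lap. So fix a lap $J$ of $\varphi_n$. I first claim that for each $1\le k\le n-1$ the point $c_k(a)$ lies in a fixed one of the three intervals $[c_2(a),c)$, $(c,\hc_2(a))$, $(\hc_2(a),c_1]$ as $a$ ranges over $J$. For $k=1$ this is clear since $c_1=1\in(\hc_2(a),c_1]$ (as $a<2$), and for $k=2$ since $c_2(a)=1-a\in[c_2(a),c)$ and $1-a<a-1=\hc_2(a)$ (as $a>1$). For $3\le k\le n-1$, the maps $a\mapsto c_k(a)=\varphi_k(a)$ and $a\mapsto\hc_2(a)=a-1$ are continuous and, by the definition of a lap, $c_k(a)\notin\{c,\hc_2(a)\}$ throughout $J$; hence $a\mapsto\sgn(c_k(a)-c)$ and $a\mapsto\sgn(c_k(a)-\hc_2(a))$ are continuous, integer-valued and nonvanishing on the connected set $J$, so they are constant. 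Consequently each factor $\xi_a(c_k(a))\in\{\tfrac12,-\tfrac12,-1\}$ and each value $1_{[c_2,\hc_2)}(c_k(a))$ depends only on $k$, hence so does $\xi_a|_1^k(c)=\prod_{j=1}^{k-1}\xi_a(c_j(a))$ for $1\le k\le n$; in particular $A_{n,J}$, $B_{n,J}$ and $C_{n,J}:=\sum_{k=1}^{n}\xi_a|_1^k(c)$ are well-defined constants. Now \eqref{eq:fair entropy expansion} for $f=f_a$, using $H=\frac{h_{\mu_a}(f_a)}{2\log2}=\mclh(a)$, reads $-A_{n,J}\,\mclh(a)=B_{n,J}\,F_a(c_n(a))-C_{n,J}$ for every $a\in J$; subtracting this at two parameters $a,a'\in J$ cancels $C_{n,J}$ and yields \eqref{eq:fair distribution differ}.

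\emph{The case $n=3$ and the size of $B_{n,J}$.} If $n=3$ the only lap is $(\sqrt2,2)$, and one reads off $\xi_a(c_1)=\xi_a(1)=-1$, $\xi_a(c_2)=\xi_a(1-a)=\tfrac12$, $1_{[c_2,\hc_2)}(c_1)=0$, $1_{[c_2,\hc_2)}(c_2)=1$, so $A_{3,J}=-1$ and $B_{3,J}=\xi_a(c_1)\xi_a(c_2)=-\tfrac12$. For $n\ge4$, note $|\xi_a|\equiv\tfrac12$ on $[c_2,c)\cup(c,\hc_2)$ and $|\xi_a|\equiv1$ on $(\hc_2,c_1]$, and that $c_k(a)$ lies in the first set precisely when $c_k(a)<\hc_2(a)$ (using $c_k(a)\ge c_2(a)$ always); hence $|B_{n,J}|=\prod_{k=1}^{n-1}|\xi_a(c_k(a))|=2^{-\Gamma_{n-1}(a)}$. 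Finally $\Gamma_{n-1}(a)\ge2$, because $c_2(a)=1-a<\hc_2(a)$ and also $c_3(a)<\hc_2(a)$; the latter follows from $f_a([\hc_2,c_1])=[c_2,c_3]\subset[c_2,\hc_2)$, valid on $(\sqrt2,2)$ as recorded in the Remark. Hence $|B_{n,J}|\le\tfrac14$.

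\emph{The bound $-2<A_{n,J}<-1$, which is the main point.} For $n\ge4$, list the $k\in\{1,\dots,n-1\}$ with $c_k(a)<\hc_2(a)$ as $k_1<k_2<\cdots<k_M$; these are exactly the indices contributing to $A_{n,J}$, and by the previous paragraph $k_1=2$ and $k_2=3$ (so $M\ge2$), with $\xi_a|_1^{k_1}(c)=\xi_a(c_1)=-1$ and $\xi_a|_1^{k_2}(c)=\xi_a(c_1)\xi_a(c_2)=-\tfrac12$. The structural fact is that a non-contributing index $j$ (one with $c_j(a)>\hc_2(a)$) is always immediately followed by a contributing one, since $c_{j+1}(a)=f_a(c_j(a))\in f_a([\hc_2,c_1])=[c_2,c_3]\subset[c_2,\hc_2)$. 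Hence there is at most one non-contributing index strictly between $k_m$ and $k_{m+1}$, i.e. $k_{m+1}\in\{k_m+1,k_m+2\}$, and in either case $|\xi_a|_1^{k_{m+1}}(c)|=\tfrac12\,|\xi_a|_1^{k_m}(c)|$ (the factor from $c_{k_m}(a)$ has modulus $\tfrac12$, the factor from the at-most-one intermediate index has modulus $1$). Therefore $|\xi_a|_1^{k_m}(c)|\le2^{-(m-1)}$ for all $m$, so
\[
A_{n,J}=-\tfrac32+\rho,\qquad \rho:=\sum_{m=3}^{M}\xi_a|_1^{k_m}(c),\qquad |\rho|\le\sum_{m=3}^{M}2^{-(m-1)}<\tfrac12,
\]
and thus $A_{n,J}\in(-2,-1)$. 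This estimate is the only non-formal step; everything else is bookkeeping with \eqref{eq:fair entropy expansion} and the combinatorics of laps.
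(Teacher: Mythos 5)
Your proposal is correct and follows the same route as the paper: constancy of $A_{n,J}$, $B_{n,J}$ on a lap by continuity, and \eqref{eq:fair distribution differ} by subtracting \eqref{eq:fair entropy expansion} at $a$ and $a'$. The only place the paper is terse is the bound $-2<A_{n,J}<-1$, which it dismisses as ``direct calculation'' (relying implicitly on the Remark's observation that $|\xi|_1^{2n+1}(c)|\le 2^{-n}$); your contributing-index argument, with $|\xi_a|_1^{k_m}(c)|=2^{-(m-1)}$ and the halving at each step, is exactly the bookkeeping that observation encodes, so you are filling in the omitted computation rather than taking a different approach.
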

\begin{proof}
  Since $J$ is a lap of $\varphi_{n}$, when $1\le k<n$, $c_k(a)\notin\{c,\hc_2(a)\}$, so $\xi_a(c_k(a))$ and $1_{[c_2,\hc_2)}(c_k(a))$ are constant on $J$ by continuity. It follows that $A_{n,J}$ and $B_{n,J}$ are constant on $J$. Substituting $f=f_a$  and $f=f_{a'}$ into \eqref{eq:fair entropy expansion} respectively and taking their difference, \eqref{eq:fair distribution differ} follows. The statements on the values of $A_{n,J}$ and $B_{n,J}$ follow from direct calculation and \eqref{eq:return count}, the definition of $\Gamma_n$.

  \end{proof}

\section{H\"{o}lder continuity on intervals}\label{se:holder}

This section is devoted to the proof of the first assertion in Theorem~\ref{thm:interval}. We introduce $\bma_r$ in \S~\ref{sse:bma} and show that each individual distribution $F_a$ is H\"{o}lder continuous  (and uniformly in $a$) in \S~\ref{sse:distribution holder} for preparation. Then we complete the proof in \S~\ref{sse:interval proof}.

\subsection{Specifying $\bma_r$}\label{sse:bma}

Recall that $\varphi_n(a)=c_n(a)=f_a^n(c)$ for each $n\ge 0$. By definition, for each $n\ge 2$, $\varphi_n$ is piecewise monotone on $[\sqrt{2},2]$, and restricted to any monotone interval, it is a polynomial of degree $n-1$. Moreover, we have the following basic facts; see, for example, \cite[Lemma~5.1-5.3]{CKY}.

\begin{lem}\label{lem:varphi mono}
Given $n\ge 2$ and an interval $J\subset[\sqrt{2},2]$, $\varphi_n$ is monotone on $J$ iff for each $a$ in the interior of $J$, $c_k(a)\ne c$ for $1\le k<n$. If $\varphi_n$ is monotone on $J$, then we have (for $a\in\partial J$, $\varphi_n'(a)$ is understood as one-sided derivative):
\begin{equation}\label{eq:phase-parameter}
  \sgn \varphi_n'(a)=\sgn (f_a^{n-1})'(c_1), \quad  \forall~a\in J \,,
\end{equation}
and there exists an absolute constant $C>1$ such that
\begin{equation}\label{eq:varphi growth}
 C^{-1} a^n \le |\varphi_n'(a)|\le  C a^n , \quad \forall~a\in J\,.
\end{equation}
\end{lem}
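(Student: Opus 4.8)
The plan is to reduce all three assertions to a single analytic estimate on the parameter derivative of the critical orbit. Write $\varphi_0\equiv 0$, $\varphi_1\equiv 1$ and $\varphi_m(a)=f_a(\varphi_{m-1}(a))=1-a|\varphi_{m-1}(a)|$; an easy induction then shows that each $\varphi_m$ is continuous, piecewise polynomial (degree $m-1$ on each piece), and $C^1$ on $(\sqrt 2,2)$ away from the finitely many parameters where $c_k(a)=c$ for some $1\le k<m$. On any open interval $J$ with $c_k(a)\ne c$ for all $1\le k<n$, differentiating the recursion gives $\varphi_n'(a)=f_a'(c_{n-1}(a))\,\varphi_{n-1}'(a)-|c_{n-1}(a)|$; unrolling this down to $\varphi_2'\equiv -1$ and then substituting $|c_k|=-c_k f_a'(c_k)/a$ and using the chain rule $\prod_{j=k}^{n-1}f_a'(c_j)=(f_a^{n-1})'(c_1)/(f_a^{k-1})'(c_1)$ collapses the resulting sum to the identity
\[
\varphi_n'(a)=\frac{(f_a^{n-1})'(c_1)}{a}\,\Sigma_n(a),\qquad \Sigma_n(a):=\sum_{k=1}^{n-1}\frac{c_k(a)}{(f_a^{k-1})'(c_1)},
\]
where $\big|(f_a^{k-1})'(c_1)\big|=a^{k-1}$ and $(f_a^{n-1})'(c_1)$ has constant sign on $\inr J$ (since each $\sgn c_j$ does).

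Everything now follows from the \textbf{Key Estimate}: there are absolute constants $0<\kappa\le K$ with $\kappa\le\Sigma_n(a)\le K$ for all $n\ge 2$ and all $a$ with $c_k(a)\ne c$, $1\le k<n$. Granting it, on such a $J$ we get $\sgn\varphi_n'(a)=\sgn(f_a^{n-1})'(c_1)\cdot\sgn\Sigma_n(a)=\sgn(f_a^{n-1})'(c_1)$, which is constant on $\inr J$; hence $\varphi_n$ is strictly monotone on $J$ and \eqref{eq:phase-parameter} holds — this is the "if" half of the equivalence together with the sign formula. For the "only if" half I argue contrapositively: if $c_k(a_0)=c$ for an interior $a_0$, take $k$ minimal; then $\varphi_k$ is $C^1$ near $a_0$ and $\varphi_k'(a_0)\ne 0$ (apply the identity and the Key Estimate with $n=k$), so $a\mapsto c_{k+1}(a)=1-a|\varphi_k(a)|$ has a strict local maximum at $a_0$ with nonzero one-sided derivatives of opposite sign. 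This turning point of $\varphi_{k+1}$ persists for $\varphi_n$ — consistently with \eqref{eq:phase-parameter}, since the factor $\sgn c_k(a)$ in $\sgn(f_a^{n-1})'(c_1)$ flips at $a_0$ — so $\varphi_n$ is not monotone on $J$; the only thing needing care is the bookkeeping when several critical collisions occur at the same $a_0$, which is routine.

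It remains to prove \eqref{eq:varphi growth} and the Key Estimate. Since $|\varphi_n'(a)|=a^{n-2}|\Sigma_n(a)|$ with $a\in[\sqrt2,2]$, the Key Estimate immediately yields $\tfrac{\kappa}{4}a^n\le|\varphi_n'(a)|\le K a^n$, i.e.\ \eqref{eq:varphi growth} with $C=\max(K,4/\kappa)$. The upper bound $\Sigma_n\le K$ is elementary: $|\Sigma_n(a)|\le\sum_{k\ge 1}a^{-(k-1)}=\frac{a}{a-1}\le 2+\sqrt2$ (equivalently, $|\varphi_n'|\le a|\varphi_{n-1}'|+1$ summed geometrically from $|\varphi_2'|=1$). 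The lower bound $\Sigma_n\ge\kappa>0$ is the heart of the matter and the step I expect to be hardest: the crude triangle inequality fails, because for $a$ near $\sqrt2$ the tail $\sum_{k\ge 2}c_k/(f_a^{k-1})'(c_1)$ can exceed the leading term ($=1$) in absolute value, so one must exploit the sign cancellations dictated by the kneading sequence of $f_a$. One route is to show that $\Sigma_\infty(a):=\lim_n\Sigma_n(a)$ — a weighted kneading series in the spirit of \cite{BR94,RT15}, cf.\ the Remark in \S\ref{sse:general} — stays bounded away from $0$ on $[\sqrt2,2]$, then combine this with the uniform rate $|\Sigma_n-\Sigma_\infty|\le a^{-(n-2)}/(a-1)$ and dispose of small $n$ by hand. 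A more hands-on route is induction on $n$ tracking the first-return structure of the critical orbit to $[c_2,\hc_2)$ — the very mechanism producing the parameters $\bma_r$ and the exponents $\tfrac{r-1}{r}$ in Theorems~\ref{thm:interval} and~\ref{thm:pointwise} — arranged so that every ascending step of $\varphi_k$ is dominated by its neighbouring descending steps. Either way, this is exactly the content packaged in \cite[Lemma~5.1--5.3]{CKY}, which a self-contained treatment would cite or reproduce along these lines.
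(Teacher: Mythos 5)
Your reduction is correct and is the standard route: the recursion $\varphi_n'(a)=f_a'(c_{n-1})\varphi_{n-1}'(a)-|c_{n-1}|$, the substitution $|c_k|=-c_kf_a'(c_k)/a$, and the resulting identity $\varphi_n'(a)=a^{-1}(f_a^{n-1})'(c_1)\,\Sigma_n(a)$ all check out (the $k=1$ term of $\Sigma_n$ correctly absorbs $\varphi_2'\equiv-1$), and with the Key Estimate in hand the equivalence, the sign formula \eqref{eq:phase-parameter} and \eqref{eq:varphi growth} do follow as you say, modulo the (genuinely routine) bookkeeping in the ``only if'' direction. Note that the paper itself offers no proof of this lemma at all --- it simply cites \cite[Lemma~5.1--5.3]{CKY} --- so your proposal is in fact more explicit than the paper's treatment.

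That said, as a self-contained argument there is one genuine gap, and you have located it yourself: the uniform lower bound $\Sigma_n(a)\ge\kappa>0$. This is not a technicality one can wave at --- it is the entire content of the lemma. Every assertion you prove (monotonicity, the sign formula, the lower half of \eqref{eq:varphi growth}) uses $\Sigma_n>0$, and the naive estimates genuinely fail: even after noting that the $k=1$ and $k=2$ terms contribute $1+(a-1)/a>1$, the remaining tail is bounded only by $1/(a(a-1))$, which exceeds $1+(a-1)/a$ for $a$ near $\sqrt2$, so positivity really does require exploiting sign cancellation in the kneading data (e.g.\ grouping the terms according to returns of the critical orbit past $c$, or a bounded-distortion/transversality argument of Coven--Kan--Yorke or Brucks--Misiurewicz type). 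Your two sketched routes are plausible but neither is carried out, and the final sentence falls back on citing \cite{CKY} --- which is exactly what the paper does. So the proposal should be read as a correct reduction of the lemma to the known transversality estimate, not as a proof of it.
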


The following variation of \eqref{eq:varphi growth} is more convenient in application.

\begin{cor}
There exists $C>1$ such that that the following holds. Given $n\ge 2$ and an interval $J\subset[\sqrt{2},2]$, suppose that $\varphi_n$ is monotone on $J$. Then we have:

\begin{equation}\label{eq:distortion}
 C^{-1} a^n |J| \le |\varphi_n(J)| \le  C a^n|J| , \quad \forall~a\in J\,.
\end{equation}
\end{cor}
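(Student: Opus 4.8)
The plan is to deduce the corollary directly from \eqref{eq:varphi growth} by integrating the derivative bound across $J$. Since $\varphi_n$ is monotone on $J$, it is in particular injective there, so $|\varphi_n(J)| = \left|\int_J \varphi_n'(t)\,\dif t\right| = \int_J |\varphi_n'(t)|\,\dif t$, where the last equality uses that $\varphi_n'$ has constant sign on $J$ by \eqref{eq:phase-parameter} (or simply by monotonicity). Applying \eqref{eq:varphi growth} pointwise inside the integral gives
\[
C^{-1}\int_J t^n\,\dif t \;\le\; |\varphi_n(J)| \;\le\; C\int_J t^n\,\dif t,
\]
so the task reduces to comparing $\int_J t^n\,\dif t$ with $a^n |J|$ for an arbitrary fixed $a\in J$.

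The key point is that, because $J\subset[\sqrt{2},2]$, the ratio of $t^n$ for any two points $t,a\in J$ is controlled by $(2/\sqrt{2})^n = 2^{n/2}$, which is \emph{not} bounded independently of $n$ — so a naive estimate is not good enough, and one must use that $J$ is short. Here is where the restriction matters: one should observe that if $|J|$ were large this could fail, but in fact we only ever apply \eqref{eq:distortion} to laps or subintervals on which $\varphi_n$ is monotone, and such intervals have length $O(a^{-n})$ by \eqref{eq:varphi growth} together with the fact that $\varphi_n$ maps into the bounded set $I_a\subset[-1,1]$; hence $|\varphi_n(J)|\le 2$, giving $\int_J t^n\,\dif t \le 2C$, and similarly the image length cannot be too small. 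Concretely, for $t,a\in J$ with $|J|\le \delta$, we have $t \le a + \delta$ and $t\ge a-\delta$, so $(t/a)^n \le (1+\delta/a)^n$; to keep this bounded one needs $\delta \lesssim 1/n$. Since any monotone interval $J$ of $\varphi_n$ satisfies $|J| \le C a^{-n}|\varphi_n(J)| \le 2C a^{-n} \le 2C\, 2^{-n/2}$, indeed $n|J|\to 0$, and $(1+|J|/a)^n \le \exp(n|J|/\sqrt 2) \le \exp(\sqrt{2}\,C\,n\,2^{-n/2})$ is bounded by an absolute constant. Absorbing this constant into $C$ yields $C^{-1}a^n|J| \le \int_J t^n\,\dif t \le C a^n|J|$ for all $a\in J$, and combining with the displayed inequality proves \eqref{eq:distortion}.

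The main obstacle is precisely this non-uniform-in-$n$ distortion of $t\mapsto t^n$: one cannot simply bound $t^n/a^n$ by a constant for $t,a$ in a fixed interval, so the argument has to be bootstrapped using that the relevant intervals $J$ shrink at least geometrically in $n$ (which is itself a consequence of \eqref{eq:varphi growth} and boundedness of the phase space). An alternative, cleaner route that avoids any circularity is to prove \eqref{eq:distortion} only under the standing hypothesis that $\varphi_n$ is monotone on $J$ \emph{and} $|\varphi_n(J)|$ is bounded (which is automatic since $\varphi_n(J)\subset I_a$, of length at most $2$); then from the upper bound in \eqref{eq:varphi growth} one gets $|J| \le C a^{-n} \cdot 2$ first, feeds this back to control the distortion factor, and concludes. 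I would present it in this order: (i) reduce to the integral via monotonicity; (ii) note $|\varphi_n(J)|\le 2$ hence $|J|\le 2Ca^{-n}$; (iii) estimate the distortion factor $(1+|J|/a)^n$ by an absolute constant; (iv) conclude. Each step is a short computation, and no step other than the distortion estimate in (iii) requires any thought.
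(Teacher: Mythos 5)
Your proof is correct and follows essentially the same route as the paper: integrate the derivative estimate \eqref{eq:varphi growth} over $J$, use the trivial bound $|\varphi_n(J)|\le 2$ (image lies in $I_a$) to deduce that $|J|\lesssim 2^{-n/2}$, and feed this back to show the distortion factor $(t/a)^n$ is bounded by an absolute constant for $t,a\in J$. The paper phrases the last step via the mean value theorem rather than by comparing $\int_J t^n\,\dif t$ with $a^n|J|$ directly, but this is cosmetic. One small slip: in your ``cleaner route'' you say the bound $|J|\le 2Ca^{-n}$ comes from the \emph{upper} bound in \eqref{eq:varphi growth}; it actually comes from the \emph{lower} bound $|\varphi_n'(t)|\ge C^{-1}t^n$, since that is what forces $|\varphi_n(J)|\ge C^{-1}(\inf J)^n|J|$, which combined with $|\varphi_n(J)|\le 2$ gives the required shrinking of $|J|$. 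With that label corrected the argument is airtight.
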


\begin{proof}
 Denote  $J=[a_1,a_2]$. Then
  \[
 2>|\varphi_n(J)|=\int_J|\varphi_n'(t)|\dif t \ge \delta \int_{a_1}^{a_2} t^{n-1}\dif t =\frac{\delta}{n}(a_2^n-a_1^n),
  \]
where ``$\ge$" is due to  \eqref{eq:varphi growth} and $\delta>0$ is an absolute constant. The line above implies that $(a_2/a_1)^n$ is bounded from above by an absolute constant. Then \eqref{eq:distortion} follows from   \eqref{eq:varphi growth} and mean value theorem.
\end{proof}

The following simple observation is an immediate corollary of \eqref{eq:phase-parameter}.

\begin{prop}\label{prop:window}
There exists an increasing sequence $\sqrt{2}=\bma_2<\bma_3<\cdots $ approaching to $2$, where $\mathbbm{a}_r$ is uniquely determined by:
  \[
  c_2(\mathbbm{a}_r)<c_3(\mathbbm{a}_r) <\cdots < c_{r-1}(\mathbbm{a}_r) < c_r(\mathbbm{a}_r) < c < c_{r+1}(\mathbbm{a}_r) =\hc_2(\mathbbm{a}_r).
  \]
Moreover, if $a\in(\bma_r,2)$, then
\[
c_2(a)<c_3(a) <\cdots < c_{r-1}(a) < c_r(a) < c \quad\text{and}\quad c_r(a)<c_{r+1}(a)<\hc_2(a).
\]
\end{prop}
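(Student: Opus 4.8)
The plan is to prove Proposition~\ref{prop:window} by carefully tracking the position of the critical orbit as the parameter $a$ increases from $\sqrt{2}$ to $2$, and using the phase–parameter relation \eqref{eq:phase-parameter} to convert questions about the sign of $\varphi_n'(a)$ (monotonicity in the parameter) into questions about the sign of $(f_a^{n-1})'(c_1)$ (the orientation of iterates in phase space). The key structural observation is that as long as the critical orbit $c_1(a),\dots,c_{r}(a)$ lies entirely in the increasing branch $[c_2(a),c)$ — equivalently, $c_k(a)<c$ for $1\le k\le r$ — each $f_a$ acts as an orientation-preserving map along the orbit, so $(f_a^{k-1})'(c_1)>0$ and hence each $\varphi_k$ is monotone increasing on the relevant interval by \eqref{eq:phase-parameter}. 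This monotonicity lets me define $\bma_r$ unambiguously.

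First I would set up the induction on $r$. For $r=2$, $\bma_2=\sqrt2$ and one checks directly that $c_3(\sqrt2)=\hc_2(\sqrt2)$ (the endpoint case), which is the content of the boundary configuration. For the inductive step, suppose $\bma_r$ is defined so that at $a=\bma_r$ we have $c_2<c_3<\dots<c_r<c<c_{r+1}=\hc_2$. I would first argue that on an interval to the right of $\bma_r$ the orbit segment $c_1,\dots,c_r$ stays below $c$ and stays increasing: since these inequalities are strict at $\bma_r$ and the maps $\varphi_k$ are continuous, they persist for $a$ slightly larger; and since on this range each $\varphi_k$ ($2\le k\le r$) is monotone (no $c_j(a)=c$ for $j<k$ by the maintained inequalities) with $(f_a^{k-1})'(c_1)=(-a)^{0}\cdot a^{k-2}\cdots>0$ — here I use that the derivative of $f_a$ is $+a$ on $[c_2,c)$ and $-a$ on $(c,c_1]$, so along an orbit staying in $[c_2,c)$ after the first step the product of derivatives is positive — \eqref{eq:phase-parameter} gives $\varphi_k'>0$, i.e. each $c_k(a)$ is strictly increasing in $a$. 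In particular $c_{r+1}(a)=f_a(c_r(a))=1-a\,c_r(a)$ is a composition whose monotonicity in $a$ I can read off, and I need to show that as $a$ increases, the configuration eventually breaks, specifically that $c_{r+1}(a)$ first reaches $c$ and then, for a nearby larger parameter range, we get $c_r(a)<c_{r+1}(a)<\hc_2(a)$ with $c_{r+1}$ now also below $c$ at the right endpoint — wait, rather: I want the next threshold $\bma_{r+1}$ to be where $c_{r+2}=\hc_2$. So I must show that on $(\bma_r,\bma_{r+1})$ the extended segment $c_2<\dots<c_{r+1}<c$ holds, with $c_{r+1}<c_{r+2}<\hc_2$, and that $\varphi_{r+2}(\bma_{r+1})=\hc_2(\bma_{r+1})$ has a unique solution.

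The monotonicity and uniqueness of the threshold $\bma_{r+1}$ is the crux. Here I would use \eqref{eq:varphi growth}: on any interval where $\varphi_{r+1}$ is monotone, $|\varphi_{r+1}'|\ge C^{-1}a^{r+1}>0$, so $\varphi_{r+1}$ is strictly monotone there, hence the equation $\varphi_{r+2}(a)=\hc_2(a)$, i.e. $f_a(\varphi_{r+1}(a)) = a-1$, i.e. $1 - a\,\varphi_{r+1}(a)$ equals $a-1$ (since $c_{r+1}(a)<c=0$ on this range, $f_a(c_{r+1})=1+a\,c_{r+1}$... I need to be careful with signs: $f_a(x)=1-a|x|$, and for $x<0$ this is $1+ax$), reduces to a strictly monotone scalar equation in $a$, giving a unique root. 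One also checks the configuration inequalities $c_2<\dots<c_{r+1}<c$ and $c_{r+1}<c_{r+2}<\hc_2$ hold throughout $(\bma_r,\bma_{r+1})$ by continuity and the already-established monotonicity of each $c_k$ in $a$, together with the endpoint values. Finally, $\bma_r\to 2$ because at $a=2$ the critical orbit is $c_1=1\mapsto c_2=-1\mapsto c_3=-1\mapsto\cdots$ (eventually fixed at the endpoint) — more precisely the orbit of $2$ never has $c_k$ returning to $c=0$ in the pattern required, and $c_k(a)\to$ the orbit of $2$, so the escape threshold recedes to $2$; quantitatively $\bma_r$ is increasing and bounded by $2$, and if $\lim\bma_r=a_\infty<2$ then at $a_\infty$ the entire critical orbit would satisfy $c_k<c$ for all $k$, contradicting that $f_{a_\infty}$ is topologically mixing (its critical orbit is not confined to $[c_2,c)$).

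The main obstacle I anticipate is the bookkeeping of signs and the precise ordering claims — verifying that $c_k(a)<c_{k+1}(a)$ (not merely $c_k(a)<c$) throughout $(\bma_r,\bma_{r+1})$, since this requires comparing $c_k(a)$ with $f_a(c_k(a))=1+a\,c_k(a)$, i.e. showing $c_k(a)(1-a)<1$... actually $c_k<1+a c_k \iff c_k(1-a)<1$, which since $c_k<0$ and $1-a<0$ reads $c_k(1-a)>0$, and one needs this $<1$: since $|c_k|\le 1$ and $|1-a|\le 1$ it holds with room to spare — so this particular inequality is easy, but the full web of comparisons and their behavior at the two endpoints $\bma_r$ and $\bma_{r+1}$ needs care. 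A secondary subtlety is making the inductive hypothesis strong enough: I should carry along not just ``$\bma_r$ exists and is unique'' but the full list of configuration inequalities valid on the half-open interval $(\bma_r, \bma_{r+1}]$ and at $\bma_r$ itself, so the induction closes cleanly. None of these steps is deep; the proposition is essentially combinatorial bookkeeping built on \eqref{eq:phase-parameter} and \eqref{eq:varphi growth}, which is presumably why the paper calls it a ``simple observation.''
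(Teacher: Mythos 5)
Your overall strategy (induction on $r$, using the phase--parameter relation \eqref{eq:phase-parameter} to control how $c_k(a)$ moves as $a$ varies, and locating $\bma_{r+1}$ as the unique solution of $\varphi_{r+2}(a)=\hc_2(a)$) is exactly the paper's. But there is a concrete sign error at the heart of your argument. You claim that along an orbit segment $c_1,\dots,c_{r}$ with $c_2,\dots,c_r<c$ one has $(f_a^{k-1})'(c_1)=(-a)^0\cdot a^{k-2}\cdots>0$, hence each $\varphi_k$ is \emph{increasing}. This is false: the product $(f_a^{k-1})'(c_1)=\prod_{j=1}^{k-1}f_a'(c_j)$ includes the factor at $c_1=1$, which lies in the \emph{decreasing} branch $(c,c_1]$, so $f_a'(c_1)=-a$ and the product is $(-a)\cdot a^{k-2}<0$. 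Hence each $\varphi_k$ ($2\le k\le r+1$) is strictly \emph{decreasing} on the relevant interval. A direct check confirms this: $c_2(a)=1-a$ and $c_3(a)=1+a-a^2$ are both decreasing on $[\sqrt2,2]$. Your sign is also internally inconsistent with the endpoint values you would need: $\varphi_{r+1}(\bma_r)=\hc_2(\bma_r)>c$ while $\varphi_{r+1}(2)=-1$, which is impossible for an increasing function.

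This is not merely cosmetic, because the correct sign is what makes the rest of the argument close. With $\varphi_{r+1}$ decreasing and $\hc_2(a)=a-1$ increasing, one gets for free that $c_{r+1}(a)<\hc_2(a)$ for all $a>\bma_r$, that $c_{r+1}(a)$ crosses $c$ at a unique $b$, and that $\varphi_{r+2}(a)-\hc_2(a)$ is strictly decreasing on $[b,2]$ from $1-(b-1)>0$ to $-1-1<0$, giving existence and uniqueness of $\bma_{r+1}$ in one stroke. With your (incorrect) increasing sign, the equation $\varphi_{r+2}(a)=a-1$ pits two increasing functions against each other, and your appeal to ``a strictly monotone scalar equation'' does not deliver uniqueness (the lower bound $|\varphi_{r+2}'|\ge C^{-1}a^{r+2}$ from \eqref{eq:varphi growth} involves an unspecified absolute constant, so it does not by itself force $\varphi_{r+2}'-1$ to have constant sign). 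Your endpoint comparisons ``by continuity and the already-established monotonicity of each $c_k$ in $a$'' would likewise all come out backwards. Once the sign is corrected, your outline coincides with the paper's proof; the remaining loose ends you flag (the comparison $c_k<c_{k+1}$, which reduces to $|c_k|(a-1)\le(a-1)^2<1$, and the fact that $\bma_r\to2$) are indeed routine.
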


\begin{proof}
  All the statements for $r=2$  hold by definition. By induction, given $r\ge 3$, assume that all the statements hold when the index is strictly less than $r$. Then according to \eqref{eq:phase-parameter} in Lemma~\ref{lem:varphi mono} with $n=r$, $\varphi_{r}$ is strictly decreasing on $[\bma_{r-1},2]$. Then, noting that $\varphi_{r}(2)=-1$, there exists a unique $b\in (\bma_{r-1},2)$ such that $c_r(b)=c$, $c_r(a)>c$ for $a\in [\bma_{r-1},b)$ and $c_r(a)<c$ for $a\in (b,2]$. Applying \eqref{eq:phase-parameter} again with $n=r+1$, we obtain that $\varphi_{r+1}$ is strictly decreasing on $[b,2]$. Noting that $\varphi_{r+1}([b,2])=[-1,1]$, the existence and uniqueness of $\bma_r$ is obtained and the statement about $a\in(\bma_r,2)$ follows from monotonicity of $\varphi_{r+1}$ on $[\bma_r,2]$.
\end{proof}

\begin{rmk}
 The conclusion in Theorem~\ref{thm:interval} (as well as Proposition~\ref{prop:fair meas holder}) implies that $\alpha(r,\bma_r)\le 1$, i.e. $\bma_r^r\ge 2^{r-1}$. In fact, this inequality is strict except for $r=2$. To see this directly, note that $a=\bma_r$ satisfies that
  \[
1+a+\cdots+ a^{r-1}-a^r = a-1 \iff a^r=\frac{(a-1)^2+1}{2-a}.
\]
Denote $\delta:=2-\bma_r\in (0,1)$, then
  \[
  (2-\delta)^r=\bma_r^r=\frac{2}{\delta}-2+\delta.
  \]
By reduction to absurdity, it is easy to see that $r\delta<1$ for $r\ge 3$. It follows that
\[
\bma_r^r=2^r(1-\delta/2)^r>2^r(1-r\delta/2)>2^{r-1}.
\]
\end{rmk}

\subsection{H\"{o}lder continuity of fair distributions}\label{sse:distribution holder}

In this subsection, we aim at proving Proposition~\ref{prop:fair meas holder}, which is based on the following simple observation.
\begin{lem}\label{lem:conformal}
  Given $a\in[\bma_r,2]$ with $r\ge 2$, the following hold:
  \begin{enumerate}
    \item If $\hc_2(a)<x\le c_1$, then $f_a^k(x)<\hc_2(a)$ for $1\le k<r$.
    \item If $J$ is a subinterval of $I_a$ such that $f_a^r$ is injective on $J$, then $\mu_a(f_a^r J)\ge 2^{r-1}\mu_a(J)$.
  \end{enumerate}
\end{lem}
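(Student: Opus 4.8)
\textbf{Proof proposal for Lemma~\ref{lem:conformal}.}

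The plan is to prove part (1) by a direct combinatorial argument about the critical orbit, and then to deduce part (2) from part (1) together with the conformality (fairness) relation \eqref{eq:fair conformal}, iterated $r$ times.

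For part (1): I would first observe that by Proposition~\ref{prop:window}, since $a\in[\bma_r,2]$ we have the chain $c_2(a)<c_3(a)<\cdots<c_r(a)<c$ and $c_r(a)<c_{r+1}(a)<\hc_2(a)$ (this is the statement for $a\in(\bma_r,2)$, with the boundary case $a=\bma_r$ handled by the defining relation $c_{r+1}(\bma_r)=\hc_2(\bma_r)$, so in all cases $c_{r+1}(a)\le\hc_2(a)$). Now take $\hc_2(a)<x\le c_1$. Applying $f_a$ once, $f_a(x)=f_a(-x)$ where $-x\in[c_2(a),c)$; more usefully, since $f_a$ is decreasing on $[\hc_2(a),c_1]$ and $f_a(\hc_2(a))=c_3(a)$, $f_a(c_1)=c_2(a)$, we get $f_a(x)\in[c_2(a),c_3(a)]$. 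Then $f_a^k(x)=f_a^{k-1}(f_a x)$ lies in $f_a^{k-1}([c_2(a),c_3(a)])$. I would argue by monotonicity: on the interval $[c_2(a),c_3(a)]\subset[c_2(a),c]$ (using $c_3(a)<c$ when $r\ge3$; the case $r=2$ is vacuous since there is no $k$ with $1\le k<2$ other than... actually $k=1$, and $f_a(x)\le c_3(a)=\hc_2(a)$ fails equality only... let me note $r=2$ needs a quick separate check using $c_3(a)<\hc_2(a)$ for $a>\sqrt2$), the map $f_a$ is increasing and sends $[c_2(a),c_3(a)]$ to $[c_3(a),c_4(a)]$, and inductively $f_a^{k-1}([c_2(a),c_3(a)])=[c_{k+1}(a),c_{k+2}(a)]$ as long as all these iterates stay in $[c_2(a),c]$, which holds for $k+2\le r+1$, i.e. $k\le r-1$. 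Since $c_{k+2}(a)\le c_{r+1}(a)\le\hc_2(a)$, we conclude $f_a^k(x)\le c_{k+2}(a)<\hc_2(a)$ for $1\le k<r$ (the strictness coming from $x>\hc_2(a)$ being strict, hence $f_a x<c_3(a)$ strictly, hence $f_a^k x<c_{k+2}(a)\le\hc_2(a)$).

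For part (2): the idea is that on an interval $J$ on which $f_a^r$ is injective, the Jacobian $j_a$ of the fair (conformal) measure must equal $2$ at every point of $f_a^k(J)$ for $0\le k<r$, except possibly the last step. More precisely, injectivity of $f_a^r$ on $J$ forces each $f_a^k$ ($0\le k\le r$) to be injective on $J$, and $f_a^k(J)$ cannot contain $c$ in its interior for $0\le k<r$; so each $f_a^k(J)$ lies either in $[c_2(a),c]$ or in $[c,c_1]$. Applying \eqref{eq:fair conformal} to the injective map $f_a$ on the set $f_a^k(J)$ gives $\mu_a(f_a^{k+1}J)=\int_{f_a^k J}j_a\,d\mu_a$, which is $\ge 2\mu_a(f_a^k J)$ whenever $f_a^k(J)\subset[c_2(a),\hc_2(a)]$ (where $j_a\equiv2$), and is $\ge\mu_a(f_a^k J)$ always. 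The only steps where $f_a^k(J)$ might fail to lie in $[c_2(a),\hc_2(a)]$ — i.e. might poke into $(\hc_2(a),c_1]$ where $j_a=1$ — are controlled by part (1): if $f_a^{k_0}(J)$ meets $(\hc_2(a),c_1]$ for some $0\le k_0<r$, pick the largest such $k_0$; by part (1) applied to points of $f_a^{k_0}(J)$, the subsequent iterates $f_a^{k_0+1}(J),\dots,f_a^{k_0+r-1}(J)$ all lie strictly below $\hc_2(a)$, and in particular $f_a^{k}(J)\subset[c_2(a),\hc_2(a))$ for $k_0<k<r$ (since $r-k_0\le r$ these are covered, noting $k-k_0\le r-1$). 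Hence among the $r$ steps $f_a^k\to f_a^{k+1}$, $0\le k<r$, at most one step — the step $k=k_0$ — can contribute a factor $1$ instead of $2$, while the other $r-1$ steps each contribute a factor $\ge2$. Therefore $\mu_a(f_a^r J)\ge 2^{r-1}\mu_a(J)$. (If no $f_a^k(J)$ meets $(\hc_2(a),c_1]$ at all, we even get a factor $2^r$.)

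\textbf{Main obstacle.} The delicate point is the bookkeeping in part (2): making rigorous that \emph{at most one} of the $r$ iteration steps lands the image in the ``bad'' region $(\hc_2(a),c_1]$ where $j_a=1$. This is exactly where part (1) is used — it says that once the orbit of an interval enters $(\hc_2(a),c_1]$, it needs at least $r$ further steps before it can possibly return there — so within a window of length $r$ there is at most one visit. One must also be a little careful at the boundary parameter $a=\bma_r$, where $c_{r+1}(\bma_r)=\hc_2(\bma_r)$ makes some of the inequalities non-strict; but since we are bounding $\mu_a$ of intervals and $\mu_a$ is atomless, endpoints contribute nothing and the argument goes through. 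Everything else — injectivity of the $f_a^k$ on $J$ following from injectivity of $f_a^r$, and the fact that an interval not containing $c$ in its interior lies on one side of $c$ — is routine.
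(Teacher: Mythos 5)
Your argument for part (1) is correct and is essentially the content the paper reads off from Proposition~\ref{prop:window}: for $x\in(\hc_2(a),c_1]$ one has $f_a(x)\in[c_2(a),c_3(a))$, and iterating the increasing branch gives $f_a^k(x)\in[c_{k+1}(a),c_{k+2}(a))$ with $c_{k+2}(a)\le\hc_2(a)$ for $1\le k<r$.

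Your argument for part (2), however, has a genuine gap, and it is exactly at the point you flag as delicate. The claim that among the $r$ steps at most one index $k$ has $f_a^k(J)$ poking into $(\hc_2(a),c_1]$ is false. Part~(1) controls the \emph{orbit of a single point} after it has entered $(\hc_2,c_1]$, not the image of the whole interval $J$: the portion of $f_a^{k_0}(J)$ lying in $[c_2,\hc_2]$ is not covered by part~(1) and may well land in $(\hc_2,c_1]$ at a later time $k>k_0$. Concretely, take $r=2$, $a=3/2$ (so $a\in[\bma_2,2]$, $c_3(a)=1/4>c$, $\hc_2(a)=1/2$) and $J=[c,\hc_2(a)+\varepsilon]$ for small $\varepsilon>0$: then $f_a^2$ is injective on $J$, yet $J\cap(\hc_2,c_1]\ne\varnothing$ and $f_a(J)\cap(\hc_2,c_1]=(\hc_2,c_1]\ne\varnothing$, so \emph{both} steps have a factor strictly less than $2$; choosing the largest such $k_0$ (here $k_0=1$) simply ignores the defect at $k=0$. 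The example also shows that picking the smallest such index would not help, since part~(1) then says nothing about the complementary portion of $f_a^{k_0}(J)$.

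The fix is to run the bookkeeping pointwise rather than on the interval image. Iterating \eqref{eq:fair conformal} along an injective branch gives
\[
\mu_a(f_a^r J)=\int_J\ \prod_{k=0}^{r-1} j_a\big(f_a^k x\big)\dif\mu_a(x),
\]
and for $\mu_a$-a.e.\ $x\in J$ the finite orbit $\{x,f_a x,\dots,f_a^{r-1}x\}$ visits $(\hc_2(a),c_1]$ at most once: if $f_a^{k_1}(x)\in(\hc_2,c_1]$ then part~(1) forces $f_a^{k_1+j}(x)<\hc_2(a)$ for $1\le j<r$, so there is no second visit within the window. Since $j_a=1$ only on $\{c\}\cup(\hc_2,c_1]$ and $\mu_a$ is atomless, this gives $\prod_{k=0}^{r-1}j_a(f_a^kx)\ge 2^{r-1}$ for $\mu_a$-a.e.\ $x$, and integrating over $J$ yields $\mu_a(f_a^rJ)\ge 2^{r-1}\mu_a(J)$. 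This is the argument the paper has in mind when it says the second assertion ``follows from the first one together with the conformal property.''
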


\begin{proof}
  The first assertion follows from Proposition~\ref{prop:window} immediately. The second assertion follows from the first one together with the conformal property \eqref{eq:fair conformal} of $\mu_a$.
\end{proof}

To deduce Proposition~\ref{prop:fair meas holder} from Lemma~\ref{lem:conformal}, we need the following technical lemma as an intermediate step.

\begin{lem}\label{lem:one-step}
  Given $r\ge 2$, there exists $\delta>0$ such that the following holds for each $a\in[\bma_r,2]$. Let $J$  be a subinterval of $I_a$ with $|J|\le\delta$. Suppose that
  \begin{equation}\label{eq:split}
    J=J_1\cup J_2\,, \quad \partial J_1\cap \partial J_2=\{x\}\,,\quad \exists ~ 0\le k<r \ \text{ s.t. }\  f_a^k x=c,
  \end{equation}
where $J_1$, $J_2$ are intervals. Then $k$ is uniquely determined by \eqref{eq:split}, and there exist positive integers $s,t$ with the following properties:
 \begin{enumerate}
   \item $r+1\le t\le 3r+1$ and $rs-(r-1)t\ge 1$.
   \item $f_a^t$ is injective on $J_i$ and $\mu_a(f_a^t J_i)\ge 2^s \mu_a(J_i)$ for $i=1,2$.
 \end{enumerate}

\end{lem}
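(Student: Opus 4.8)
The plan is to start from the point $x\in J$ where $f_a^k x=c$ for the (unique, once $\delta$ is small enough) $k$ with $0\le k<r$, and then follow the forward orbit of $c$ under $f_a$ long enough to ``regain'' the expansion of $\mu_a$ guaranteed by Lemma~\ref{lem:conformal}. Concretely, after applying $f_a^{k+1}$ we land at $c_1$, and then one step later at $c_2(a)<c$; the iterates $c_2(a),c_3(a),\dots$ climb monotonically by Proposition~\ref{prop:window} until the first return into $[\hc_2(a),c_1]$, i.e. until the first $m$ with $c_m(a)>\hc_2(a)$. The first task is to bound this first-return time $m$: since $a\ge\bma_r$, Proposition~\ref{prop:window} gives $c_2(a)<\cdots<c_{r+1}(a)<\hc_2(a)$, so $m\ge r+2$ is not automatic, but the mixing/growth estimate \eqref{eq:varphi growth}, together with the fact that on a lap $\varphi_j$ has derivative comparable to $a^j$, forces $c_j(a)$ to leave $[c_2(a),\hc_2(a)]$ after boundedly many steps — roughly $m\le 2r+1$ or so, because $|c_j(a)-c_2(a)|$ cannot stay both positive and small for too long once $j$ exceeds a multiple of $r$. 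I would extract a clean uniform bound $m\le 2r+1$ (or a similar explicit constant) from the fact that $\hc_2(a)=a-1\ge \bma_r-1$ is bounded away from $c_2(a)=1-a$, combined with the monotone ascent.

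The second step is bookkeeping. Set $t:=k+1+m'$ where $m'$ is chosen so that $f_a^t$ maps a neighborhood of $x$ (hence each of $J_1,J_2$, after shrinking $\delta$) injectively, and so that along the way we have accumulated full preimage-branch crossings. Each time the critical orbit is in $[c_2(a),\hc_2(a)]\setminus\{c\}$ we have $j_a=2$, contributing a factor $2$ to the conformal expansion on the relevant branch; by Lemma~\ref{lem:conformal}(2), once we iterate a further block of length $r$ on which $f_a^r$ is injective we pick up a factor $2^{r-1}$. Iterating Lemma~\ref{lem:conformal}(2) $\lfloor (t-m)/r\rfloor$ times and absorbing the leftover steps gives $\mu_a(f_a^t J_i)\ge 2^s\mu_a(J_i)$ with $s$ essentially $(r-1)\lfloor t/r\rfloor +O(1)$; the point is to arrange $t$ and $s$ so that $t\le 3r+1$ while $rs-(r-1)t\ge 1$. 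Since $rs-(r-1)t\ge 1$ is equivalent to $s/t\ge (r-1)/r + 1/(rt)$, i.e. $s$ slightly exceeds $\frac{r-1}{r}t$, and since every $r$ consecutive steps of the orbit after the first return genuinely contribute at least $r-1$ to $s$ (that is exactly Lemma~\ref{lem:conformal}(2) applied to a length-$r$ injectivity block), this is a matter of choosing $t$ to be a convenient multiple of $r$ plus the bounded offset $k+1+m$, checking the offset stays within $[r+1,3r+1]$, and verifying the $+1$ slack by hand.

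Uniqueness of $k$ is the easiest part: if $\delta$ is smaller than the minimal gap $\min_{0\le i<j<r,\,a\in[\bma_r,2]}\dist(f_a^{-i}c, f_a^{-j}c)$ restricted to the relevant region (which is positive and uniform by compactness and the fact that the $\varphi_j$ are polynomials with no common zero structure), then $J$ of length $\le\delta$ can contain at most one point of $\bigcup_{0\le k<r} f_a^{-k}(c)$. I would also need to ensure, by further shrinking $\delta$, that $f_a^t$ is genuinely injective on each $J_i$ — this follows because $f_a^t|_{J_i}$ fails injectivity only if $J_i$ contains another point of $\bigcup_{0\le j<t}f_a^{-j}(c)$, and $t\le 3r+1$ is bounded, so the same compactness argument applies after replacing $r$ by $3r+1$ in the gap computation.

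The main obstacle I expect is the explicit first-return bound $m\le 2r+1$ (or whatever constant makes $t\le 3r+1$ work): one must rule out the critical orbit lingering inside $[c_2(a),\hc_2(a)]$ for more than a controlled number of steps, uniformly over $a\in[\bma_r,2]$, and the natural tool is the parameter-exclusion / growth estimate \eqref{eq:varphi growth} together with the monotone-ascent picture of Proposition~\ref{prop:window} — but translating ``$\varphi_j$ grows like $a^j$ on laps'' into ``$c_j(a)$ exits a fixed interval quickly'' requires a short but careful argument about how long $a\mapsto c_j(a)$ can keep $c_j(a)$ pinned near $c_2(a)=1-a$. Once that bound is in hand, the arithmetic inequality $rs-(r-1)t\ge1$ with $t\in[r+1,3r+1]$ is a routine consequence of the iterated application of Lemma~\ref{lem:conformal}.
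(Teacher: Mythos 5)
Your plan hinges on proving a uniform bound $m \le 2r+1$ for the ``first return time'' $m = \min\{j \ge 2 : c_j(a) > \hc_2(a)\}$, uniformly over $a \in [\bma_r, 2]$. This bound is false, and it fails exactly at the endpoint $a = \bma_r$. By Proposition~\ref{prop:window}, $c_{r+1}(\bma_r) = \hc_2(\bma_r)$, and since $f_a$ is even, $c_{r+2}(\bma_r) = f_{\bma_r}(\hc_2(\bma_r)) = c_3(\bma_r)$; the critical orbit is therefore eventually periodic, cycling through $c_3 < c_4 < \cdots < c_{r+1} = \hc_2$, and \emph{never} enters the open interval $\hI = (\hc_2, c_1]$ at all. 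So $m = \infty$ at $a = \bma_r$, and for $a$ slightly above $\bma_r$ the first return time grows without bound as $a \to \bma_r^+$. The growth estimate \eqref{eq:varphi growth} cannot rescue this: it controls how $c_j(a)$ moves as $a$ varies, not how long the fixed orbit of $c$ under a single $f_a$ can linger inside $[c_2(a), \hc_2(a)]$. Since the lemma is required to hold for each $a \in [\bma_r,2]$ with a uniform $\delta$, an argument built on a uniform first-return bound cannot close.

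A second, related gap is in the bookkeeping. Repeated application of Lemma~\ref{lem:conformal}(2) over injectivity blocks of length $r$ yields exactly $s = (r-1)m$, $t = rm$, hence $rs - (r-1)t = 0$, not $\ge 1$. The extra $+1$ of slack has to come from the specific structure imposed by \eqref{eq:split}: because $J$ contains a critical preimage, there is a \emph{known} time step $j = k+1$ at which the orbit visits $\hI$, and this lets one count a full window of $t$ steps in which at most one (or, in the delicate regime near $\bma_r$, at most two out of $t = 3r+1$) visits to $\hI$ occur. The paper's actual proof does not bound a return time at all. It fixes a window of at most $3r+1$ iterates of $J$ and splits into three cases: (i) $J \cap \hI = \varnothing$, giving $(s,t) = (r, r+1)$; (ii) $J \cap \hI \ne \varnothing$ (forcing $k = r-1$) with $c_{r+1}(a)$ not close to $\hc_2(a)$, giving $(s,t) = (2r-1, 2r+1)$; and (iii) the hard case $c_{r+1}(a)$ close to $\hc_2(a)$ (that is, $a$ near $\bma_r$), giving $(s,t) = (3r-2, 3r+1)$. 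The key observation in case (iii) — which is precisely the regime where your return time blows up — is that for each individual point $y$ in $K := f_a^{r-1}J$, at most one of $f_a^{r+1}y$ and $f_a^{2r}y$ can lie in $\hI$, so the two \emph{potential} extra visits to $\hI$ cost only one factor of $2$, not two. Your sketch has no substitute for this mechanism; I would rework the argument around a bounded-length window and a case split on the position of $J$ and of $c_{r+1}(a)$ relative to $\hc_2(a)$, rather than on return times.
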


\begin{proof}
In the proof we will introduce constants $\delta_i>0$ dependent only on $r$, $1\le i\le 5$, and show that the statements hold for $\delta=\min\{\delta_i:1\le i\le 5\}$.  By the definition of $\bma_r$, there exists $\ve=\ve(r)>0$ such that $|c_k(a)-c|\ge \ve$ when $1\le k \le r$ and $a\ge \mathbbm{a}_r$. As a result, there exists $\delta_1>0$ such that the following holds for each $a\ge \mathbbm{a}_r$. If $J$ is an interval  with $|J|\le \delta_1$, then at most one of $J,f_aJ,\cdots, f_a^rJ$ contains $c$. The uniqueness of $k$ in \eqref{eq:split} follows. Now let $J=J_1\cup J_2$ be as in \eqref{eq:split} and $|J|\le \delta_1$. Then $f_a^{r+1}$ is injective on $J_i$.
To proceed, we divide the situation into two cases. Denote $\hI:=(\hc_2(a),c_1]$ for short.

\begin{description}
  \item[Case 1]  $J\cap \hI=\varnothing$. Then $f_a^j J \cap \hI=\varnothing$ for $0\le j\le r$ unless $j=k+1$, provided that $|J|\le \delta_2$ for some $\delta_2>0$ only depends on $r$. The conclusion follows by choosing $(s,t)=(r,r+1)$.
  \item[Case 2]  $J\cap \hI\ne\varnothing$. Then $k=r-1$,  $f_a^j J \cap \hI=\varnothing$ for $1\le j\le r-1$  and $c_{r+1}(a)\in (c,\hc_2(a)]$, provided that $|J|\le \delta_3$ for some $\delta_3>0$ only depends on $r$. Note that $f_a^{r-1}$ is injective on $J$ and $\mu_a(K)\ge 2^{r-2}\mu_a(J)$, where $K:=f_a^{r-1} J$ for short. There are two subcases according as the relative location of $c_{r+1}(a)$ in $(c,\hc_2(a)]$.
      \begin{description}
        \item[Subcase 2-1] $c_{r+1}(a)$ is not close to $\hc_2(a)$. Then $c\notin f_a^j K$ for $1\le j \le r+1$, and $f_a^j K\cap \hI=\varnothing$ for $0\le j\le r+1$ unless $j=1$,  provided that $|J|\le \delta_4$ for some $\delta_4>0$ only depends on $r$. The conclusion follows by choosing $(s,t)=(2r-1,2r+1)$.
        \item[Subcase 2-2] $c_{r+1}(a)$ is  close to $\hc_2(a)$. Then $c\notin f_a^j K$ for $1\le j \le 2r+1$, and for each $x\in K$ the following holds, provided that $|J|\le \delta_5$ for some $\delta_5>0$ only depends on $r$.
            \begin{itemize}
              \item $f_a^j x\notin \hI$ for $0\le j\le 2r+1$ unless $j\in \{1,r+1,2r\}$.
              \item If $f_a^{r+1} x\in \hI$, then $f_a^{2r} x\notin \hI$.
            \end{itemize}
           The conclusion follows by choosing $(s,t)=(3r-2,3r+1)$.
      \end{description}

\end{description}

\end{proof}

Recall the notation of H\"{o}lder exponent $\alpha(r,a):=\frac{(r-1)\log 2}{r\log a}$ defined in \eqref{eq:holder exp}.
\begin{prop}\label{prop:fair meas holder}
  Given $r\ge 2$, there exists $C>0$ such that the following holds for any $\mathbbm{a}_r\le a\le 2$. For any interval $J\subset I_a$, $|\mu_a(J)|\le C |J|^\alpha$, where $\alpha=\alpha(r,a)\in [\frac{1}{2},1]$.
\end{prop}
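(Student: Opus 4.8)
The plan is to bootstrap from Lemma~\ref{lem:one-step} via an iteration scheme that tracks how $\mu_a$-mass grows under forward iteration. Fix $r\ge 2$ and let $\delta>0$ be as in Lemma~\ref{lem:one-step}. I would first reduce to the case $|J|\le\delta$: for $|J|>\delta$ the bound $|\mu_a(J)|\le 1\le \delta^{-1}|J|\le \delta^{-\alpha}|J|^\alpha$ is trivial (using $|J|\le 2$ and $\alpha\le 1$), so adjusting $C$ handles large intervals. So assume $|J|\le\delta$ from now on.

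\emph{The iteration.} Starting from $J=:J^{(0)}$, I build a sequence of intervals $J^{(0)}\supsetneq$ (a descendant) $J^{(1)}\supsetneq\cdots$ together with times $t_0,t_1,\dots$ and exponents $s_0,s_1,\dots$ as follows. Given $J^{(m)}$ with $|J^{(m)}|\le\delta$, there are two possibilities. If there is \emph{no} $0\le k<r$ with $c\in f_a^k(J^{(m)})$, i.e. $f_a^r$ is injective on $J^{(m)}$, then by Lemma~\ref{lem:conformal}(2) we get $\mu_a(f_a^r J^{(m)})\ge 2^{r-1}\mu_a(J^{(m)})$ with $f_a^r$ injective; set $t_m=r$, $s_m=r-1$ and pass to $J^{(m+1)}:=f_a^r J^{(m)}$ if still short. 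Otherwise some iterate of $J^{(m)}$ contains $c$; pick the least such $k$, write $x$ for the preimage of $c$, and split $J^{(m)}=J_1\cup J_2$ at $x$ as in \eqref{eq:split}. Now apply Lemma~\ref{lem:one-step} to get $t\in[r+1,3r+1]$, $s$ with $rs-(r-1)t\ge 1$, $f_a^t$ injective on each $J_i$, and $\mu_a(f_a^t J_i)\ge 2^s\mu_a(J_i)$. Since $\mu_a$ is atomless, $\mu_a(J^{(m)})=\mu_a(J_1)+\mu_a(J_2)$, so if $\mu_a(J^{(m)})>0$ then $\mu_a(J_i)\ge\frac12\mu_a(J^{(m)})$ for at least one $i$; take that $J_i$ as the object, set $t_m=t$, $s_m=s$, and continue with $J^{(m+1)}:=f_a^{t}J_i$ when it is short. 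At each step the interval $J^{(m)}$ maps forward by an expanding-ish polynomial map on which some power of $f_a$ of length $t_m\le 3r+1$ is injective; by \eqref{eq:distortion} (applied to the monotone branches composing this power) $|J^{(m+1)}|\ge C_1^{-1}a^{t_m}|J^{(m)}|\ge C_1^{-1}a^{r}\,|J^{(m)}|$ with $a\ge\sqrt2>1$ absolute, so lengths grow geometrically and the process must stop after finitely many steps, say $N=N(J)$ of them, when $|J^{(N)}|>\delta$. Chaining the mass inequalities and the length inequalities:
\begin{equation}\label{eq:chain}
\mu_a(J)\le 2^{-(s_0+\cdots+s_{N-1})}\cdot\tfrac12^{\#\{m:\text{split occurred}\}}\cdot\mu_a(J^{(N)})\le 2^{-\sum s_m},
\qquad |J|\le C_1^{N}a^{-\sum t_m}|J^{(N)}|\le 2C_1^{N}a^{-\sum t_m}.
\end{equation}

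\emph{Extracting the H\"older bound.} The arithmetic conditions $rs_m-(r-1)t_m\ge 1$ (which also holds, with equality, in the first ``no split'' case where $(s_m,t_m)=(r-1,r)$) give $\sum s_m\ge \frac{r-1}{r}\sum t_m+\frac{N}{r}$. Plugging into \eqref{eq:chain},
\[
\mu_a(J)\le 2^{-\frac{r-1}{r}\sum t_m}\cdot 2^{-N/r}
\le \Bigl(\tfrac{2C_1^{N}}{|J|}\Bigr)^{-\frac{(r-1)\log 2}{r\log a}}\cdot 2^{-N/r}
= |J|^{\alpha(r,a)}\cdot(2C_1^N)^{-\alpha(r,a)}\,2^{-N/r},
\]
where in the middle step I used $a^{-\sum t_m}\le 2C_1^N/|J|$ from \eqref{eq:chain}. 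Since $C_1$ is an absolute constant $>1$ and $\alpha(r,a)\ge\frac12$, the factor $(2C_1^N)^{-\alpha(r,a)}2^{-N/r}\le (2C_1^N)^{-1/2}2^{-N/r}$ is bounded by a constant $C=C(r)$ uniformly in $N\ge 0$ and in $a\in[\bma_r,2]$ (indeed $\to 0$ as $N\to\infty$), which yields $\mu_a(J)\le C|J|^{\alpha(r,a)}$ as desired. Finally $\alpha(r,a)=\frac{(r-1)\log2}{r\log a}\in[\frac12,1]$: it is $\le 1$ because $a\ge\bma_r$ and $\bma_r^r\ge 2^{r-1}$ (noted in the remark after Proposition~\ref{prop:window}), and $\ge\frac12$ because $a\le 2$ gives $\frac{(r-1)\log2}{r\log a}\ge\frac{r-1}{r}\ge\frac12$.

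\emph{Main obstacle.} The delicate point is not any single estimate but setting up the iteration so that (a) every branch that splits contributes the factor $2^{s}$ with the \emph{same} $s$ to \emph{both} halves (which Lemma~\ref{lem:one-step} guarantees, so choosing the heavier half loses only a universal $\frac12$), and (b) the bound $t_m\le 3r+1$ keeps the number of monotone branches of $f_a^{t_m}$ — and hence the distortion constant in \eqref{eq:distortion} — under uniform control, so that the accumulated constant $C_1^N$ is beaten by the exponential gain $2^{-N/r}$. One must also be slightly careful that when we split and then apply $f_a^t$, the image interval $f_a^t J_i$ need not have length $\le\delta$, at which point we simply stop; the inequality $|J^{(N)}|>\delta$ is then exactly what makes $\mu_a(J^{(N)})\le 1$ usable without loss. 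Verifying the case analysis of Lemma~\ref{lem:one-step} is where the genuine combinatorics of the critical orbit near $\bma_r$ enters, but that is already done.
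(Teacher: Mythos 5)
Your overall strategy --- reduce to short intervals, invoke Lemma~\ref{lem:one-step} together with Lemma~\ref{lem:conformal}(2), and convert the arithmetic condition $rs-(r-1)t\ge 1$ into the exponent $\alpha(r,a)$ --- is exactly the engine of the paper's proof, and your large-interval reduction and the verification that $\alpha(r,a)\in[\frac12,1]$ are fine. But the bookkeeping of the iteration has two genuine flaws. First, a sign error that is fatal: choosing the half with $\mu_a(J_i)\ge\frac12\mu_a(J^{(m)})$ gives $\mu_a(J^{(m)})\le 2\cdot 2^{-s_m}\mu_a(J^{(m+1)})$, so the accumulated factor is $2^{+\#\{\text{splits}\}}$, not $\bigl(\frac12\bigr)^{\#\{\text{splits}\}}$ as in your displayed chain. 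Since each split step only earns $2^{-1/r}$ from $rs_m-(r-1)t_m\ge1$ (and the non-split steps earn nothing: $r(r-1)-(r-1)r=0$, so your ``$+\frac{N}{r}$'' should count split steps only), the net effect of each split is a \emph{loss} of $2^{1-1/r}>1$, which nothing in your argument compensates when splits are frequent. Second, the termination claim fails: the measure-heavier half $J_i$ need not be the longer half, so $|J^{(m+1)}|=a^{t_m}|J_i|$ can be smaller than $|J^{(m)}|$ (indeed $|J_i|$ can be an arbitrarily small fraction of $|J^{(m)}|$), and ``lengths grow geometrically'' is unjustified. (A minor further point: \eqref{eq:distortion} concerns the parameter maps $\varphi_n$, not $f_a^t$; for tent maps you simply have $|f_a^tJ_i|=a^t|J_i|$ exactly, so no distortion constant is needed there.)

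The paper avoids both problems by not following a single orbit of intervals. It runs an induction on scales $\delta_n=\delta\lambda^{-n}$ and, at a split, bounds $\mu_a(J)=\mu_a(J_1)+\mu_a(J_2)$ by applying the (coarser-scale) inductive hypothesis to \emph{both} images $f_a^tJ_i$. The cost of splitting is then only the subadditivity defect $\frac{|J_1|^\alpha+|J_2|^\alpha}{(|J_1|+|J_2|)^\alpha}\le 2^{1-\alpha}$, and since $a^{\alpha}=2^{(r-1)/r}$ one has $2^{1-\alpha}<2\cdot a^{-\alpha}=2^{1/r}$, which is exactly what the gain $2^{-1/r}$ from $rs-(r-1)t\ge1$ pays for. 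If you want to keep your framework, you would have to replace ``follow the heavier half'' by a branching (tree-like) recursion of precisely this kind; as written, the single-chain argument does not close.
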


\begin{proof}
 In the remark following Proposition~\ref{prop:window}, we have shown that $\alpha(r,a)\in [\frac{1}{2},1]$. Fix  $\delta\in (0,1)$ stated in Lemma~\ref{lem:one-step} and we will show the proposition holds for  $C:=\delta^{-1} >1$. Let us argue by induction on length of $J$. To begin with, let $\lambda:=\min_{r\ge 2} 2^{r}(1-2^{-1/r})>1$ and note that $1<\lambda\le \mathbbm{a}_r$. Let $\delta_n:=\delta\cdot \lambda^{-n}$ for $n\ge 0$. According to the choice of $C$, the conclusion holds when $|J|\ge \delta_0$. By induction, suppose that the conclusion holds when $|J|\ge \delta_{n}$ for some $n\ge 0$. Now let $\delta_{n+1}\le|J|<\delta_{n}$. If $f_a^r$ is injective on $J$, then $\mu_a(f_a^rJ)\ge 2^{(r-1)}\mu_a(J)$ and $|f_a^rJ|= a^r|J|>\delta_n$, and hence
\[
\mu_a(J)\le 2^{-(r-1)}\mu_a (f_a^rJ) \le 2^{-(r-1)}\cdot  C \cdot |f_a^rJ|^\alpha  = C \cdot |J|^\alpha.
\]
The induction is completed in this situation. Otherwise, \eqref{eq:split} holds and we are in the position to apply Lemma~\ref{lem:one-step}. Follow the notations in the statement of Lemma~\ref{lem:one-step}, we have:
\[
\mu(J_i) \le 2^{-s} \cdot \mu(f_a^t J_i) \le  2^{-s}\cdot C \cdot\big(\max\{\delta_{n}, |f_a^t J_i|\}\big)^\alpha,\quad i=1,2.
\]
Noting that $|f_a^t J_i|=a^t |J_i|$ and
\[
2^{-s}\cdot a^{t\alpha}=2^{\big(-rs+(r-1)t\big)\big/ r}\le  2^{-1/r},
\]
the estimate above can be written as:
\[
\mu(J_i) \le C\cdot 2^{-1/r} \cdot \big( \max\{ a^{-t}\delta_n, |J_i|\}\big)^\alpha,\quad i=1,2.
\]
We may assume $|J_1|\ge |J_2|$. Then $|J_1|\ge \frac{|J|}{2}\ge a^{-2}\delta_{n+1}>a^{-t}\delta_n$. To complete the induction, it suffices to verify that
\[
   K:=\frac{\sum_{i=1}^2 \big( \max\{  a^{-t}\delta_n, |J_i|\}\big)^\alpha}{|J|^\alpha} = \frac{|J_1|^\alpha + \max\{ a^{-t} \delta_n , |J_2|\}\big)^\alpha}{(|J_1|+|J_2|)^\alpha}< 2^{1/r}.
\]
There are two cases.
\begin{itemize}
  \item If $|J_2|\ge a^{-t}\delta_n$, then  $K= \frac{|J_1|^\alpha +|J_2|^\alpha}{(|J_1|+|J_2|)^\alpha}\le 2^{1-\alpha}<\frac{2}{a^\alpha}=2^{1/r}$.
  \item If $|J_2|< a^{-t}\delta_n$, then $K=\frac{|J_1|^\alpha +(a^{-t}\delta_n)^\alpha}{|J|^\alpha} < 1 +\big(a^{-r-1}\lambda\big)^\alpha  \le 1+2^{-r+1/r}\lambda \le 2^{1/r}$.
\end{itemize}
The induction is completed.

\end{proof}

Let us end this subsection with the following simple fact that might be of independent interest, although we will not use it in this paper.

\begin{cor}
  The function $G$ defined below is continuous:
  \[
  G: \dom(G)=\big\{(a,x) : \sqrt{2}\le a\le 2, x\in I_a \big\}\to [0,1]\,, \quad G(a,x)= F_a(x) \,.
  \]
\end{cor}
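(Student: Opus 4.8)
The plan is to establish continuity of $G(a,x) = F_a(x)$ on $\dom(G)$ by combining the uniform H\"older control on each fiber, provided by Proposition~\ref{prop:fair meas holder}, with continuity of the map $a\mapsto F_a$ in a suitable sense. First I would reduce the two-variable continuity to a one-variable statement: since for each fixed $a$ the map $x\mapsto F_a(x) = \mu_a([c_2(a),x])$ is uniformly $\alpha(r,a)$-H\"older with a constant $C$ that depends only on $r$ (hence locally uniform in $a$), it suffices to prove that $a\mapsto F_a(x(a))$ varies continuously along any continuous selection $x(a)\in I_a$; equivalently, fixing a reference point, it is enough to show $a\mapsto F_a$ is continuous in the uniform norm on any compact parameter subinterval $[\bma_r, b]$. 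Indeed, if $(a_j,x_j)\to (a_0,x_0)$ with all $a_j\ge \bma_r$, then $|F_{a_j}(x_j) - F_{a_0}(x_0)| \le \|F_{a_j}\circ(\text{affine reparametrization}) - F_{a_0}\|_\infty + C|x_j - x_0|^{\alpha}$, where the affine reparametrization matches the intervals $I_{a_j}$ and $I_{a_0}$; the second term vanishes by the H\"older bound and the first by uniform convergence.

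The core step is therefore uniform continuity of $a\mapsto F_a$. Here I would exploit Lemma~\ref{lem:entropy difference}: on a lap $J$ of $\varphi_n$, equation~\eqref{eq:fair distribution differ} pins down $F_a(c_n(a)) - F_{a'}(c_n(a'))$ in terms of $\mclh(a)-\mclh(a')$, which is already known to be continuous (indeed H\"older, by the first assertion of Theorem~\ref{thm:interval}), times the bounded factor $A_{n,J}/B_{n,J}$. The strategy is to run the iterated identity \eqref{eq:fair distribution iteration} for both parameters $a,a'$ and subtract: since $\xi_a|_0^n(c)\to 0$ uniformly (the remark after \eqref{eq:fair entropy expansion} gives $|\xi_a|_1^{2m+1}(c)|\le 2^{-m}$), the tail term $\xi_a|_0^n\cdot F_a\circ f_a^n$ is uniformly small, and the finitely many remaining terms are continuous in $a$ on each lap because $c_k(a)$ and the indicator $1_{[c_2,\hc_2)}(c_k(a))$ are. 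This controls $F_a$ at the finitely many points $c_k(a)$, $k\le n$; to extend to arbitrary $x$ I would again invoke the fiberwise H\"older modulus together with the fact that, by topological mixing, the points $\{c_k(a)\}_{k\le n}$ become $\epsilon$-dense in $I_a$ for $n$ large, uniformly in $a$ on the compact parameter interval (this density follows from the uniform expansion \eqref{eq:varphi growth} / \eqref{eq:distortion} of $\varphi_k$).

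The main obstacle I anticipate is handling parameters $a'$ that lie on the boundary between two laps of $\varphi_n$, or more precisely dealing with the non-uniformity in $n$ needed to achieve $\epsilon$-density: as $a'$ approaches a parameter where some $c_k(a')$ hits $c$ or $\hc_2(a')$, the combinatorics (which lap we are in) changes, and the relevant $n$ for which $\{c_k\}_{k\le n}$ is $\epsilon$-dense may blow up. The remedy is to fix $\epsilon>0$ first, choose $n=n(\epsilon)$ uniformly via \eqref{eq:distortion} so that $\{c_k(a)\}_{1\le k\le n}$ is $\epsilon$-dense for all $a\in[\bma_r,b]$ simultaneously, then use that the finitely many functions $a\mapsto c_k(a)$, $k\le n$, are equicontinuous on $[\bma_r,b]$, so that for $a'$ close enough to $a$ the two orbits stay $O(\epsilon)$-close pointwise and the difference of the finite sums in \eqref{eq:fair distribution iteration} is $O(\epsilon)$ by the fiberwise H\"older bound applied to $F_{a'}$ — avoiding any need to track lap boundaries. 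Combining the three error contributions (tail $\lesssim 2^{-n/2}$, finite-sum difference $\lesssim \epsilon^\alpha$, fiberwise H\"older slack $\lesssim \epsilon^\alpha$) and letting $\epsilon\to 0$ gives uniform continuity of $a\mapsto F_a$, and hence continuity of $G$.
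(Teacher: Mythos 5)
Your argument has a fatal gap at its load-bearing step: the claim that $\{c_k(a)\}_{1\le k\le n}$ becomes $\epsilon$-dense in $I_a$ for $n$ large, uniformly in $a$. Topological mixing makes \emph{backward} orbits (preimage sets) dense, not the forward critical orbit. The forward critical orbit is in fact a finite set for $a=2$ (it is $\{0,1,-1\}$) and for every parameter in the dense set $\mscp$ of Lemma~\ref{lem:periodic cirt dense}, so the density fails pointwise on a dense set of parameters, let alone uniformly on $[\bma_r,b]$. The cited justification via \eqref{eq:varphi growth}/\eqref{eq:distortion} is a non sequitur: those estimates control the parameter derivative of $\varphi_k$, i.e.\ how $c_k(a)$ moves as $a$ varies, and say nothing about how the points $c_1(a),\dots,c_n(a)$ are distributed inside $I_a$ for fixed $a$. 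This is precisely where the paper's proof diverges from yours: it shows that the set $\mclc_a$ of continuity points is fully invariant ($f_a^{-1}\mclc_a=\mclc_a$, via the functional equation \eqref{eq:transfer}) and contains $c$, hence contains the \emph{backward} orbit $\bigcup_n f_a^{-n}(c)$, which is dense by the locally-eventually-onto property; it then closes the argument by squeezing with the monotonicity of $x\mapsto F_{a_0}(x)$ between two nearby continuity points, rather than by proving uniform convergence of $F_{a'}\to F_{a}$.

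There is a second, independent problem in your ``remedy'' paragraph: the cross-fiber comparison is circular. Proposition~\ref{prop:fair meas holder} is a \emph{within-fiber} modulus of continuity; it bounds $|F_{a'}(x)-F_{a'}(y)|$ but gives no information about $|F_a(x)-F_{a'}(x)|$ at a common point $x$. The only cross-fiber identities available are at $x=c$ (where $F_a(c)=\mclh(a)$) and the lap identity \eqref{eq:fair distribution differ}, which compares $F_a(c_n(a))$ with $F_{a'}(c_n(a'))$ only when $a,a'$ lie in a common lap of $\varphi_n$ and carries the factor $A_{n,J}/B_{n,J}$ of size up to $2^{\Gamma_{n-1}+1}$ — so the lap combinatorics cannot be discarded, and one-sided arguments are still needed when $a$ itself is a lap endpoint. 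Writing $|F_a(c_k(a))-F_{a'}(c_k(a'))|\le|F_a(c_k(a))-F_{a'}(c_k(a))|+|F_{a'}(c_k(a))-F_{a'}(c_k(a'))|$, your equicontinuity-plus-H\"older argument only handles the second term; the first term is exactly the uniform convergence you set out to prove. If you want to keep a structure close to yours, the fix is the paper's: establish joint continuity at the single point $(a,c)$ (fiberwise H\"older plus continuity of $\mclh$), propagate it through the fixed-point relation $\Phi_aF_a=F_a$ to the dense fully invariant set $\mclc_a$, and finish with monotonicity in $x$.
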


\begin{proof}
 From the relation $\Phi_aF_a=F_a$, where $\Phi_a$ is defined by \eqref{eq:transfer} for $f=f_a$, it can be easily seen that $G$ is continuous at $(a,x)$ iff it is continuous at $(a,f_ax)$. It follows that for each $a$, the set $\mclc_a:=\{x\in I_a: G\text{ is continuous at } (a,x) \}$ satisfies that  $f_a^{-1}\mclc_a=\mclc_a$. On the other hand, $c\in \mclc_a$ because of Proposition~\ref{prop:fair meas holder} and continuity of $\mclh$. Therefore, $\mclc_a$ is dense in $I_a$ and contains the critical orbit.

 To complete the proof, given $(a_0,x_0)\in \dom(G)$, let us show that $G$ is continuous at $(a_0,x_0)$, and we may assume that $c_2(a_0)<x_0<c_1$. Given $\ve>0$, let $x_1<x_0<x_2$ be such that $x_1,x_2\in\mclc_{a_0}$ and $|G(a_0,x_1)-G(a_0,x_2)|\le \ve$. Then there exists a closed neighborhood $J$ of $a_0$ in $[\sqrt{2},2]$, such that $|G(a,x_i)-G(a_0,x_i)|\le \ve$ for $a\in J$ and $i=1,2$. Since $G(a,x)=F_a(x)$ is increasing in $x$,
 \[
 \max_{(a,x),(a',x')\in J\times [x_1,x_2]} |G(a,x)-G(a',x')| = \max_{a_1,a_2\in J} |G(a_1,x_1)-G(a_2,x_2)|\le 3\ve,
 \]
which completes the proof.
\end{proof}

\subsection{Lower bound of H\"{o}lder exponents on intervals}\label{sse:interval proof} 


Now we are ready to prove the first assertion in Theorem~\ref{thm:interval}. Given $r\ge 2$,  $b\in(\bma_r,2]$ and  $\bma_r\le a_1<a_2\le b$, to estimate $|\mclh(a_2)-\mclh(a_1)|$, let
\[
n=\max\{m\ge 3: (a_1,a_2) \text{ is contained in a lap of }\varphi_m\}.
\]
By definition, $(a_1,a_2)\subset J$ for some lap $J$ of $\varphi_{n}$, and there exists $a\in (a_1,a_2)$ such that $\tc:=c_{n}(a)\in \{c,\hc_2(a)\}$. It suffices to estimate  $|\mclh(a)-\mclh(a_i)|$. By continuity of $\mclh$, we may assume that $[a_1,a_2]\subset J$. Then by \eqref{eq:fair distribution differ},
\[
- A_{n,J} \cdot\Big( \mclh(a)-\mclh(a_i)\Big)= B_{n,J} \cdot \Big( \big[ F_{a}(\tc)- F_{a_i}(\tc)\big] + \big[ F_{a_i}(c_n(a))-F_{a_i}(c_n(a_i))\big] \Big).
\]
It can be rewritten as:
\[
 - \big(A_{n,J}+\eta B_{n,J} \big)\cdot\Big(\mclh(a)-\mclh(a_i)\Big)  =B_{n,J} \cdot  \Big( \big[ F_{a_i}(c_n(a))-F_{a_i}(c_n(a_i))\big] + \ve \Big),
\]
where $\eta$ and $\ve$ are defined as follows.
\begin{itemize}
  \item If $\tc=c$, then $\eta=1$ and $\ve=0$.
  \item If $\tc=\hc_2(a)$, then $\eta=2$ and $\ve=F_{a_i}(\hc_2(a_i))-F_{a_i}(\hc_2(a))$.
\end{itemize}
By Lemma~\ref{lem:entropy difference}, $|A_{n,J}+\eta B_{n,J}|\ge \frac{1}{2}$ always holds. By Proposition~\ref{prop:fair meas holder} and the definition of $\ve$, there exists a constant $C_1>1$ only dependent on $r$, such that
\[
|\ve|\le C_1\cdot|a-a_i|^{\alpha(r,a_i)}.
\]
It follows that:
\[
 \frac{1}{2}\,|\mclh(a)-\mclh(a_i)| \le |B_{n,J}|\cdot\Big(|F_{a_i}(\varphi_n(a))-F_{a_i}(\varphi_n(a_i))|+ C_1|a-a_i|^{\alpha(r,a_i)} \Big).
\]
Since $J$ is a lap of $\varphi_n$, $\Gamma_{n-1}|_J$ is constant. Then we have:

\[
|B_{n,J}|=2^{-\Gamma_{n-1}|_J}\le  2^{1-(1-1/r)n},
\]
where the ``$=$" is due to Lemma~\ref{lem:entropy difference} and the ``$\le$" follows from $J\subset[\bma_r,2]$ and the first assertion in Lemma~\ref{lem:conformal}.  On the other hand, since $a_i\ge \bma_r$, by Proposition~\ref{prop:fair meas holder}, there exists $C_2>0$ only dependent on $r$, such that
\[
|F_{a_i}(\varphi_n(a))-F_{a_i}(\varphi_n(a_i))|\le C_2\cdot |\varphi_n(a)-\varphi_n(a_i)|^{\alpha(r,a_i)}\,.
\]
Since $a,a_i\in J$ and $\varphi_{n}$ is monotone on $J$, by \eqref{eq:distortion}, there exists an absolute constant $C_3>0$ such that
\[
|\varphi_n(a)-\varphi_n(a_i)|^{\alpha(r,a_i)}  \le C_3 \cdot a_i^{n \cdot\alpha(r,a_i)}\cdot |a-a_i|^{\alpha(r,a_i)}.
\]
Combining all the estimates above together with the relations $a_i^{\alpha(r,a_i)}=2^{1-1/r}$ and $\alpha(r,a_i)\ge \alpha(r,b)$, we obtain that
\[
|\mclh(a)-\mclh(a_i)| \le 4(C_1+C_2C_3)\cdot |a-a_i|^{\alpha(r,b)}.
 \]
The proof of the first assertion in Theorem~\ref{thm:interval} is completed.

\section{Pointwise H\"{o}lder exponents}\label{se:pointwise}

In this section we mainly deal with pointwise H\"{o}lder exponents of $\mclh$ and prove Theorem~\ref{thm:pointwise}; the second assertion in Theorem~\ref{thm:interval} follows as a direct corollary. In \S~\ref{sse:slow rec} we introduce the parameter set $\slowrec$ and show that it is of full measure. In \S~\ref{sse:pointwise proof} we prove the first assertion in  Theorem~\ref{thm:pointwise}. In  \S~\ref{sse:extremal} we prove the second assertion in Theorem~\ref{thm:pointwise} and the second assertion in Theorem~\ref{thm:interval}.

\subsection{Parameter exclusion}\label{sse:slow rec}

\begin{defn}\label{def:slow rec}
 Given $a\in (\sqrt{2},2)$ and $n\ge 3$, denote the lap of $\varphi_n$ containing $a$ by $(a-r_n^{(1)}(a), a+r_n^{(2)}(a))$ if it is well-defined, and denote $r_n^{(1)}(a)=r_n^{(2)}(a)=0$ otherwise. Given $\theta\in (0,1)$, for $i=1,2$, denote
  \[
\msca_i(\theta)=\bigcup_{N=3}^\infty \bigcap_{n=N}^\infty \big\{a\in (\sqrt{2},2): r_n^{(i)}(a)\ge (\theta a^{-1})^n\big\}.
  \]
Moreover, denote
\[
\slowrec=\bigcap_{i=1}^2 \bigcap_{0<\theta<1} \msca_i(\theta).
\]
\end{defn}
By definition, for $i=1,2$,
\[
\bigcup_{n=3}^\infty\{a\in (\sqrt{2},2) : r_n^{(i)}(a)=0\} = \bigcup_{n=3}^\infty\{a\in (\sqrt{2},2) : c_n(a)=c ~\text{or}~ \hc_2(a) \}
\]
is a countable set; $\msca_i(\theta)$ is a Borel set decreasing in $\theta$, so that $\slowrec$ is also Borel.

\begin{prop}\label{prop:slow rec}
  For each $\theta\in (0,1)$, $\msca_i(\theta)$ is of full Lebesgue measure in $[\sqrt{2},2]$, $i=1,2$. As a consequence, $\slowrec$
is of full Lebesgue measure in $[\sqrt{2},2]$.
\end{prop}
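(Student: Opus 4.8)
The plan is to prove that, for each $i\in\{1,2\}$ and each $\theta\in(0,1)$, the set $[\sqrt2,2]\setminus\msca_i(\theta)$ is Lebesgue null. Since $\msca_i(\theta)$ decreases in $\theta$, this gives $\slowrec=\bigcap_{i=1}^{2}\bigcap_{m\ge1}\msca_i(1-\tfrac1m)$, a countable intersection of sets of full measure, whence the second assertion. By Definition~\ref{def:slow rec}, $[\sqrt2,2]\setminus\msca_i(\theta)$ is the $\limsup$--set $\{a:r_n^{(i)}(a)<(\theta a^{-1})^{n}\ \text{for infinitely many }n\}$, so by the Borel--Cantelli lemma it suffices to show
\[
\sum_{n\ge3}\bigl|\{a\in(\sqrt2,2):r_n^{(i)}(a)<(\theta a^{-1})^{n}\}\bigr|<\infty .
\]

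To bound the $n$-th term I would sum over the laps $L$ of $\varphi_n$ (with respect to $\xi_a$). On such a lap $c_k(a)\ne c$ for $1\le k<n$, so by Lemma~\ref{lem:varphi mono} the map $\varphi_n$ is monotone on $L$ with $C^{-1}a^{n}\le|\varphi_n'|\le Ca^{n}$. Writing $L=(a^-,a^+)$, the inequality $r_n^{(i)}(a)<(\theta a^{-1})^{n}$ forces $a$ into a one-sided neighbourhood of $a^-$ or of $a^+$ of radius at most $(\theta(\inf_{L}a)^{-1})^{n}$, so
\[
\bigl|\{a:r_n^{(i)}(a)<(\theta a^{-1})^{n}\}\bigr|\ \le\ \theta^{n}\sum_{L}(\inf_{L}a)^{-n},
\]
the sum running over all laps $L$ of $\varphi_n$.

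The whole argument thus reduces to the uniform bound $\sum_{L}(\inf_{L}a)^{-n}\le C$ with $C$ independent of $n$. By the distortion estimate \eqref{eq:distortion} one has $(\inf_{L}a)^{-n}\le C|L|/|\varphi_n(L)|$, and as the laps are disjoint with $\sum_{L}|L|\le 2-\sqrt2$ it would be enough to know that $|\varphi_n(L)|$ is bounded below uniformly; more precisely, it suffices to know that the number of laps of $\varphi_n$ meeting any interval $[a_1,a_2]\subset[\sqrt2,2]$ is at most $C\bigl(a_2^{\,n}(a_2-a_1)+1\bigr)$, i.e.\ that the density of cut points of $\varphi_n$ is comparable to the local expansion rate $a^{n}$ of $\varphi_n$. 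Granting this, partition $[\sqrt2,2]$ into $n$ intervals of equal length: on each, $\sup a/\inf a$ raised to the $n$-th power is bounded, every lap meeting it has $(\inf_{L}a)^{-n}\le C(\sup a)^{-n}$, and there are at most $C\bigl((\sup a)^{n}\cdot\mathrm{length}+1\bigr)$ of them, so the block contributes $\le C(\mathrm{length}+2^{-n/2})$; summing over the $n$ blocks gives $\sum_{L}(\inf_{L}a)^{-n}\le C\bigl((2-\sqrt2)+n\,2^{-n/2}\bigr)$, which is bounded in $n$.

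The single nontrivial ingredient is the lap-counting estimate just invoked, and this is precisely where it matters that the laps are taken with respect to $\xi_a$: including the extra cuts at the parameters with $c_k(a)=\hc_2(a)$ is what keeps the image $\varphi_n(L)$ of each lap from degenerating, hence the laps from becoming too short relative to $(\inf_{L}a)^{-n}$. I expect the proof of this estimate — by induction on $n$, controlling how a lap of $\varphi_n$ subdivides into laps of $\varphi_{n+1}$ at the at most two points where $\varphi_n(a)\in\{c,\hc_2(a)\}$ (at most two, since $\varphi_n$ and $\varphi_n-\mathrm{id}$ are monotone on the lap), with the help of the expansion estimate \eqref{eq:varphi growth} — to be the main obstacle. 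Once it is in hand, the displayed estimates yield $\sum_{n\ge3}C\theta^{n}<\infty$ for every $\theta\in(0,1)$; Borel--Cantelli shows $[\sqrt2,2]\setminus\msca_i(\theta)$ is null, and the countable intersection completes the proof.
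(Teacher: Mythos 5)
Your overall architecture --- Borel--Cantelli applied to the limsup set, the observation that $r_n^{(i)}(a)<(\theta a^{-1})^{n}$ confines $a$ to a one-sided neighbourhood of an endpoint of its lap, and a lap count to bound the measure of the $n$-th bad set --- is exactly the paper's. The gap is that everything hinges on the lap-counting estimate $\mcll_n([a_1,a_2])\le C\bigl(a_2^{\,n}(a_2-a_1)+1\bigr)$, which you explicitly defer as ``the main obstacle'' and for which your sketched induction would not work: a lap of $\varphi_n$ splits into laps of $\varphi_{n+1}$ only at the at most two parameters where $\varphi_n(a)\in\{c,\hc_2(a)\}$, so the induction yields no more than $\mcll_{n+1}\le 3\,\mcll_n$, i.e.\ $3^{n}$ growth, which is useless here because $\theta^{n}\,3^{n}(\inf a)^{-n}$ need not be summable. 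The whole substance of the proposition is the claim that the cut points accumulate at rate $a^{n}$ and not faster, and that cannot be extracted lap-by-lap in parameter space.

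The paper closes this by transferring to phase space: with $M_n(a)$ the number of maximal monotone intervals of $f_a^n$ on $I_a$, one uses two classical kneading-theory facts --- that $M_n(\cdot)$ is monotone in $a$, and more precisely that $M_n(a_2)-M_n(a_1)$ dominates the number of parameters $a\in(a_1,a_2)$ with $f_a^m(c)=c$ for some $m<n$, and that $\tfrac1n\log M_n(a)\to h_{top}(f_a)=\log a$. This yields $\limsup_n\tfrac1n\log\mclm_n(J)\le\log\sup J$ (Lemma~\ref{lem:mono growth}); since each monotone piece contains at most $n+O(1)$ laps (the equation $\varphi_k(a)=\hc_2(a)$ has at most one solution per monotone piece once $|\varphi_k'|>1$), one gets $\tfrac1n\log\mcll_n(J)\to\log\sup J$ (Corollary~\ref{cor:lap growth}). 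Note also that you do not actually need your uniform constant: the factor $\theta^{n}$ provides exponential slack, so it suffices to partition $[\sqrt2,2]$ into finitely many intervals $J_k=[a_{k-1},a_k]$ with $a_{k-1}>\theta a_k$ (a partition depending on $\theta$, not on $n$) and to combine the soft rate $\mcll_n(J_k)=e^{(\log a_k+o(1))n}$ with $a_k\cdot\theta a_{k-1}^{-1}<1$. If you supply (or cite) the phase-space lap-number argument, your proof goes through; as written, the key quantitative input is assumed rather than proved.
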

The proof of Proposition~\ref{prop:slow rec} is based on Lemma~\ref{lem:mono growth} below. Lemma~\ref{lem:mono growth} might be well known and Proposition~\ref{prop:slow rec} should be obvious to experts. However, we fail to find any explicit reference on either of them, so we provide a self-contained proof here for the reader's convenience.

Before proving Proposition~\ref{prop:slow rec}, let us introduce some notations for preparation. For an interval $I\subset \mbbr$, denote its closure by $\cl I$ and its interior by $\inr I$. Let $\phi:J\to\mbbr$ be a function defined on an interval $J$.  $I\subset J$ is called a {\bf maximal monotone interval} of $\phi$ on $J$, if
\begin{itemize}
  \item $I$ is an open interval and $\phi$ is monotone on $I$;
  \item for any open interval $I'$ with $\cl I\subset I'\subset J$, $\phi$ is not monotone on $I'$.
\end{itemize}

Given $n\ge 3$ and an interval $J\subset [\sqrt{2},2]$, define $\mclm_n(J)$ and $\mcll_n(J)$ as follows.
\[
\mclm_n(J) := \# \{ I\subset J : \text{$I$ is a maximal monotone interval of $\varphi_n$ on $J$} \}.
\]
\[
  \mcll_n(J):=\# \{ L \subset J: \text{$L$ is a lap of $\varphi_n$} \}.
\]
Note that by definition, we always have $\mclm_n(J)\le \mcll_n(J)+2$.
\begin{lem}\label{lem:mono growth}
 Let $J\subset [\sqrt{2},2]$ be an interval. Then we have:
\begin{equation}\label{eq:mono growth}
  \lim_{n\to\infty}\frac{1}{n}\log  \mclm_n(J) =\log \sup J.
\end{equation}
\end{lem}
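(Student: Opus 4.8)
The plan is to prove the two inequalities $\liminf_n\tfrac1n\log\mclm_n(J)\ge\log b$ and $\limsup_n\tfrac1n\log\mclm_n(J)\le\log b$ separately, where $b:=\sup J$ (we may of course assume $J$ non-degenerate). Two elementary observations will do most of the bookkeeping. First, $\mclm_n$ is monotone under inclusion of intervals: a monotone interval inside $K'\subseteq K$ is a monotone interval inside $K$ and hence lies in a maximal one, distinct maximal monotone intervals of $\varphi_n$ on $K'$ lying in distinct ones on $K$. Second, and this is where \eqref{eq:phase-parameter}--\eqref{eq:varphi growth} enter: if $a<a'$ in $[\sqrt2,2]$ satisfy $\sgn c_j(a)=\sgn c_j(a')\ne 0$ for all $1\le j<n$, then by induction on $n$ the sign of $c_j$ is constant and nonzero on all of $[a,a']$ for each $j<n$---using that $\varphi_{n-1}$ is strictly monotone on the maximal monotone interval containing $[a,a']$, so $c_{n-1}$, having the same nonzero sign at the two endpoints, keeps that sign throughout---and therefore $[a,a']$ meets none of the finitely many parameters in $\bigcup_{3\le k<n}\varphi_k^{-1}(c)$ and so lies in a single maximal monotone interval of $\varphi_n$. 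It follows that $\mclm_n(K)$ equals the number of distinct ``sign words'' $\theta(a):=(\sgn c_1(a),\dots,\sgn c_{n-1}(a))\in\{+,-\}^{\,n-1}$ realized by $a\in K$ avoiding those exceptional parameters; and the maximal monotone intervals themselves are, up to a coalescing at the (finitely many) parameters where $\sgn(f_a^{n-1})'(c_1)$ fails to change sign, the connected components of $\inr K$ cut at $\bigcup_{3\le k<n}\varphi_k^{-1}(c)$.

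For the lower bound I would fix a small $\ve>0$ with $[b-\ve,b-\tfrac{\ve}{2}]\subseteq J$ and let $I_1,\dots,I_m$ be the maximal monotone intervals of $\varphi_n$ on this subinterval, which are pairwise disjoint and cover it up to finitely many points, so that $\sum_j|I_j|=\tfrac{\ve}{2}$. Picking $a_j\in I_j$ and using that $\varphi_n$ is monotone on $I_j$ with $\varphi_n(I_j)\subseteq[-1,1]$, \eqref{eq:distortion} gives $|I_j|\le C a_j^{-n}|\varphi_n(I_j)|\le 2C(b-\ve)^{-n}$, whence $m\ge\tfrac{\ve}{4C}(b-\ve)^n$. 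By monotonicity $\mclm_n(J)\ge m$, so $\liminf_n\tfrac1n\log\mclm_n(J)\ge\log(b-\ve)$, and letting $\ve\downarrow 0$ gives the desired lower bound.

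For the upper bound, by monotonicity it suffices to treat $\mclm_n([\sqrt2,b])$. By the observations above this equals the number of sign words $\theta(a)$, $a\in[\sqrt2,b]$; since $c_1(a)=1$, $\theta(a)$ is the length-$(n-1)$ truncation of the kneading sequence of $f_a$. Here I would invoke the standard monotonicity of the kneading invariant along the tent family (as in \cite{MT}, and implicit in \cite{CKY}): for $a\le b$ the kneading sequence of $f_a$ is dominated by that of $f_b$, so $\theta(a)$ is admissible for $f_b$, i.e. occurs as a length-$(n-1)$ itinerary of some point of $I_b$. The number of such itineraries is exactly the lap number $\ell_{n-1}(f_b)$ of $f_b^{\,n-1}$, so $\mclm_n([\sqrt2,b])\le \ell_{n-1}(f_b)$; and by the classical identity $\lim_m\tfrac1m\log\ell_m(f_b)=h_{top}(f_b)=\log b$ this yields $\limsup_n\tfrac1n\log\mclm_n(J)\le\log b$. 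Combining the two halves gives $\lim_n\tfrac1n\log\mclm_n(J)=\log\sup J$.

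The main obstacle is the upper bound. The distortion estimate \eqref{eq:distortion} on its own gives only the crude bound $\mclm_n(J)\le C\,2^n$, so pinning down the rate as $\log\sup J$ rather than $\log 2$ forces one to import, in some form, the fact that the number of critical-orbit combinatorics realizable by tent maps of slope $\le b$ grows like $b^n$---equivalently, the monotonicity of the kneading invariant together with the equality of lap-number growth rate and topological entropy, $h_{top}(f_b)=\log b$. Everything else (the localized distortion argument for the lower bound, and the reduction of $\mclm_n$ to counting sign words via \eqref{eq:phase-parameter}) is elementary.
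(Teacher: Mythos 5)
Your proof is correct. The lower bound is essentially the paper's argument: localize near $\sup J$, use \eqref{eq:distortion} to bound the length of each monotone piece by a constant times $(\sup J-\ve)^{-n}$, and divide. The upper bound, however, takes a genuinely different route. The paper introduces the set $\mscc_n(J)=\{a\in\inr J:\exists\,3\le m<n,\ f_a^m(c)=c\}$ of cut parameters, observes $\mclm_n(J)=\#\mscc_n(J)+1$, and then uses the monotonicity of the lap number $M_n(a)$ of $f_a^n$ in the parameter $a$ (strict increase across each point of $\mscc_n$) to get $\mclm_n(J)-1\le M_n(\sup J)-M_n(\inf J)$; the growth rate $\lim_n\frac1n\log M_n(b)=\log b$ then finishes. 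You instead identify $\mclm_n(J)$ with the number of critical sign words $\theta(a)$ of length $n-1$, use monotonicity of the kneading invariant to argue that each such word is an admissible itinerary for $f_b$ (since $\sigma^k\nu_a\preceq\nu_a\preceq\nu_b$), and thus bound the count by the lap number of $f_b^{\,n-1}$, invoking the same lap-number/entropy identity. Both are standard consequences of Milnor--Thurston theory; the paper's bookkeeping via $\#\mscc_n$ stays entirely in parameter space and is a bit more self-contained, while yours passes through the symbolic encoding. Your intuition at the end (that the crude bound $\mclm_n(J)\lesssim 2^n$ is insufficient and one must import the realizability-of-combinatorics fact) matches exactly why the paper needs the second fact about $M_n$.

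One small correction: your parenthetical ``up to a coalescing at the \dots parameters where $\sgn(f_a^{n-1})'(c_1)$ fails to change sign'' is unnecessary and slightly misleading. By Lemma~\ref{lem:varphi mono}, $\varphi_n$ fails to be monotone on any interval whose interior contains a parameter with $c_k(a)=c$ for some $k<n$; since $\varphi_n$ is polynomial on each monotone piece, adjacent pieces cannot coalesce, so the maximal monotone intervals are exactly the complementary components of $\bigcup_{3\le k<n}\varphi_k^{-1}(c)$ in $\inr J$, with no exceptions. This is in fact precisely the identity $\mclm_n(J)=\#\mscc_n(J)+1$ used in the paper.
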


\begin{proof}
Let us show the ``$\ge$" part first and denote $b=\sup J$. Given $\sqrt{2}\le a<b$, by \eqref{eq:distortion}, there exists an absolute constant $C>1$ such that the following holds: if $I$ is a subinterval of $[a,b]$ and $\varphi_n$ is monotone on $I$, then  $|I|\le C a^{-n}$. It follows that $\mclm_n([a,b])\ge C^{-1}(b-a)a^n$ and hence
  \[
  \liminf_{n\to\infty}\frac{1}{n}\log  \mclm_n(J)\ge \lim_{a\to b} \liminf_{n\to\infty} \frac{1}{n}\log  \mclm_n([a,b])=\log b.
  \]

For the other direction, denote
\[
\mscc_n(J)=\{a\in \inr J: \exists~3\le m<n  \text{ s.t. }  f_a^m(c)=c \},\quad \text{ so that }~ \mclm_n(J) = \# \mscc_n(J) +1.
\]
Given $a\in [\sqrt{2},2]$, denote
\[
M_n(a):=\#\{ I\subset I_a :\text{ $I$ is a maximal monotone interval of $f_a^n$ on $I_a$}\},\quad n\ge 1.
\]
We will make use of two well known facts of $M_n(a)$; see, for example, \cite{MT}. The first one is:
\begin{equation}\label{eq:top entropy}
\lim_{n\to\infty}\frac{1}{n}\log M_n(a) =h_{top}(f_a)=\log a.
\end{equation}
The second is that $M_n(a)$ is increasing in $a$. More precisely, given $(a_1,a_2)\subset [\sqrt{2},2]$, we have:

\begin{itemize}
  \item if $\mscc_n((a_1,a_2))=\varnothing$, then $a\mapsto M_n(a)$ is constant on $(a_1,a_2)$;
  \item if $\mscc_n((a_1,a_2))\ne\varnothing$, then $M_n(a_1)<M_n(a_2)$.
\end{itemize}
It follows that
\[
M_n(\sup J)-M_n(\inf J) \ge \# \mscc_n(J)  = \mclm_n(J)-1.
\]
Combing the line above with \eqref{eq:top entropy}, we have:
\[
 \limsup_{n\to\infty}\frac{1}{n}\log\mclm_n(J) \le \log \sup J.
\]
The proof is completed.
\end{proof}

\begin{cor}\label{cor:lap growth}
    Let  $J\subset [\sqrt{2},2]$ be an interval. Then we have:

\begin{equation}\label{eq:lap growth}
  \lim_{n\to\infty}\frac{1}{n}\log  \mcll_n(J) =\log \sup J.
\end{equation}
\end{cor}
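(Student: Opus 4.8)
The plan is to deduce Corollary~\ref{cor:lap growth} directly from Lemma~\ref{lem:mono growth}, using only the elementary comparison $\mclm_n(J)\le \mcll_n(J)+2$ for the lower bound and a matching trivial inequality in the opposite direction. Since a lap of $\varphi_n$ is by definition a connected component of $\{a\in(\sqrt2,2):\varphi_k(a)\neq c\text{ or }\hc_2(a),\ 3\le k<n\}$, every lap is in particular an interval on which no $\varphi_k$ (for $3\le k<n$) vanishes, hence on which each $\varphi_k$ is monotone; but monotonicity of all the $\varphi_k$ does not by itself make $\varphi_n$ monotone, so a single lap may be cut into several maximal monotone intervals of $\varphi_n$. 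Thus $\mcll_n(J)\le \mclm_n(J)$, and combined with $\mclm_n(J)\le \mcll_n(J)+2$ we get $\mclm_n(J)-2\le \mcll_n(J)\le \mclm_n(J)$.

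From the two-sided bound $\mclm_n(J)-2\le \mcll_n(J)\le \mclm_n(J)$ the conclusion is immediate: taking logarithms, dividing by $n$, and letting $n\to\infty$, both $\frac1n\log\bigl(\mclm_n(J)-2\bigr)$ and $\frac1n\log\mclm_n(J)$ tend to $\log\sup J$ by \eqref{eq:mono growth} (here one uses that $\mclm_n(J)\to\infty$, which follows from the ``$\ge$'' half of Lemma~\ref{lem:mono growth} since $\sup J>1$, so that $\log(\mclm_n(J)-2)=\log\mclm_n(J)+o(1)$). By the squeeze theorem $\frac1n\log\mcll_n(J)\to\log\sup J$, which is \eqref{eq:lap growth}.

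The only point requiring a moment's care — and the one I would spell out explicitly — is the inequality $\mcll_n(J)\le \mclm_n(J)$, i.e. that distinct laps of $\varphi_n$ contained in $J$ land in distinct maximal monotone intervals of $\varphi_n$ on $J$. Two distinct laps are separated by a parameter $a_0$ with $\varphi_k(a_0)\in\{c,\hc_2(a_0)\}$ for some $3\le k<n$. If $\varphi_k(a_0)=c$, then by Lemma~\ref{lem:varphi mono} $\varphi_{k+1}$ (and hence $\varphi_n$, by the same lemma applied at stage $n$) fails to be monotone across $a_0$ — more simply, $a_0\in\mscc_n(J)$ in the notation of the proof of Lemma~\ref{lem:mono growth}, so $a_0$ separates two maximal monotone intervals of $\varphi_n$. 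If instead $\varphi_k(a_0)=\hc_2(a_0)$, then $\varphi_{k+1}(a_0)=f_{a_0}(\hc_2(a_0))=c_3(a_0)=\varphi_3(a_0)$, and one checks that this forces some earlier $\varphi_j(a_0)=c$ with $3\le j<k$ (since the preimage branch point $\hc_2$ is itself the image of $c$ under the other branch), putting us back in the first case; alternatively one simply notes that the boundary points of laps form a subset of $\mscc_n(J)\cup\{$finitely many extra points$\}$, which already gives $\mcll_n(J)\le \#\mscc_n(J)+O(1)=\mclm_n(J)+O(1)$, and the $O(1)$ is harmless after dividing by $n$. Either way no genuine obstacle arises; the corollary is a soft consequence of the lemma.
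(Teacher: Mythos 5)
Your proof contains a genuine gap: the inequality $\mcll_n(J)\le \mclm_n(J)$ is false, and the reasoning you give for it reverses the actual relationship between laps and maximal monotone intervals. A lap is a connected component of $\{a:\varphi_k(a)\ne c\text{ and }\varphi_k(a)\ne\hc_2(a),\ 3\le k<n\}$. On the interior of a lap one therefore has $\varphi_k(a)\ne c$ for \emph{all} $1\le k<n$ (the cases $k=1,2$ being trivial since $\varphi_1\equiv 1$ and $\varphi_2(a)=1-a\ne 0$), and by Lemma~\ref{lem:varphi mono} this is exactly the condition for $\varphi_n$ to be monotone there. So every lap lies inside a single maximal monotone interval of $\varphi_n$ — it is never ``cut into several'' of them. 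The cutting goes the other way: a maximal monotone interval $I$ is subdivided into laps at the points $a\in I$ where $\varphi_k(a)=\hc_2(a)$ for some $3\le k<n$, and there can be on the order of $n$ such points in $I$ (roughly one per value of $k$ once the derivatives $|\varphi_k'|$ exceed $1$). Consequently $\mcll_n(J)$ can exceed $\mclm_n(J)$ by a factor growing with $n$, and your two-sided bound $\mclm_n(J)-2\le\mcll_n(J)\le\mclm_n(J)$ fails on the right.

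Your fallback claims are also unsound. The assertion that $\varphi_k(a_0)=\hc_2(a_0)$ ``forces some earlier $\varphi_j(a_0)=c$'' does not hold: the equation $\varphi_k(a)=a-1$ is a separate algebraic condition on the parameter and need not be accompanied by any $\varphi_j(a)=c$, so lap boundaries of this type are genuinely new cut points not already in $\mscc_n(J)$. And the count of these extra points is \emph{not} $O(1)$: there can be up to one such point for each $3\le k<n$ inside each monotone interval, giving $O(n)\cdot\mclm_n(J)$ extra points in total, not $O(1)$. The paper's proof handles precisely this: it shows that for each $k$ beyond a fixed threshold (where $|\varphi_k'|>1$, so $\varphi_k(a)-(a-1)$ is strictly monotone on a monotone interval of $\varphi_n$) there is at most one solution to $\varphi_k(a)=\hc_2(a)$ per monotone interval, yielding $\mcll_n(J)\le (n+C)\mclm_n(J)$. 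Combined with $\mcll_n(J)\ge\mclm_n(J)-2$, the polynomial factor $n+C$ disappears after taking $\frac1n\log(\cdot)$ and the squeeze argument you envisioned then goes through. The corollary is indeed soft once you have the correct upper bound, but that bound requires the counting argument in the paper; it is not an immediate consequence of the definitions.
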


\begin{proof}
  Let us begin with a simple observation. Let $I\subset [\sqrt{2},2]$ be an open interval on which  $\varphi_n$ is monotone, and let $L$ be a lap of $\varphi_n$. If $a\in I$ is an end point of $L$, then $\varphi_{k}(a)=\hc_2(a)=a-1$ for some $3\le k<n$. Let $m\ge 1$ be such that if $k\ge m$, then $|\varphi_k'|>1$ on each monotone interval of $\varphi_k$. Then $\varphi_{k}(a)=\hc_2(a)$ has at most one solution in $I$ for $k\ge m$. It follows that $\mcll_n(I)\le n+C$ for some constant $C>0$ independent of $I$ and $n$. As a result, for any interval $J\subset [\sqrt{2},2]$,
  \[
  \mcll_n(J)\le (n+C) \mclm_n(J),\quad \forall~n\ge 3.
  \]
On the other hand,  $\mcll_n(J)\ge \mclm_n(J)-2$ always holds. Then the conclusion follows from \eqref{eq:mono growth} in Lemma~\ref{lem:mono growth}.

\end{proof}

\begin{proof}[Proof of Proposition~\ref{prop:slow rec}]
 Fix an arbitrary $\theta\in (0,1)$. Then there exist $\sqrt{2}=a_0<a_1<\cdots<a_m=2$ with $a_{k-1}>\theta a_{k}$ for $1\le k\le m$, so
 $[\sqrt{2},2]=\cup_{k=1}^m J_k$, where $J_k=[a_{k-1},a_k]$. It suffices to show that (we use $|\cdot|$ to denote Lebesgue measure on $\mbbr$ below)
 \[
| J_k\setminus  \msca_i(\theta) | =0,\quad 1\le k\le m,~i=1,2.
 \]
By definition,
\[
J_k\setminus  \msca_i(\theta) =\bigcap_{N=3}^\infty \bigcup_{n=N}^\infty \{a\in J_k: r_n^{(i)}(a)< (\theta a^{-1})^n\big\} \subset \bigcap_{N=3}^\infty \bigcup_{n=N}^\infty \{a\in J_k: r_n^{(i)}(a)< (\theta a_{k-1}^{-1})^n\big\}.
\]
By definition, $J_k$ can be covered by at most $\mcll_n(J_k)+2$ laps of $\varphi_n$ together with a finite set. Therefore,
\[
\big|\big\{a\in J_k: r_n^{(i)}(a)< (\theta  a_{k-1}^{-1})^n\big\}\big| \le (\mcll_n(J_k)+2) \cdot (\theta a_{k-1}^{-1})^n.
\]
Combing the line above with \eqref{eq:lap growth} in Corollary~\ref{cor:lap growth} and noting that $\sup J_k \cdot (\theta a_{k-1}^{-1})<1$, we have:
\[
\sum_{n=3}^\infty \big|\big\{a\in J_k: r_n^{(i)}(a)< (\theta a^{-1})^n\big\}\big| <+\infty.
\]
Then $|J_k\setminus  \msca_i(\theta)|=0$ follows from Borel-Cantelli lemma.
\end{proof}

\subsection{Pointwise exponents at parameters in $\slowrec$}\label{sse:pointwise proof}

This subsection is devoted to the proof of the first assertion in  Theorem~\ref{thm:pointwise}. It suffices to prove the following.

\begin{prop}\label{prop:pointwise}
  Given $a\in \slowrec$, the following two equalities hold:
  \[
    \limsup_{b\to a}\frac{\log|\mclh(b)-\mclh(a)|}{\log|b-a|}= \frac{\log 2}{\log a}\cdot \limsup_{n\to\infty} \frac{1}{n}\Gamma_n(a),
  \]
  \[
    \liminf_{b\to a}\frac{\log|\mclh(b)-\mclh(a)|}{\log|b-a|}= \frac{\log 2}{\log a}\cdot \liminf_{n\to\infty} \frac{1}{n}\Gamma_n(a).
  \]
\end{prop}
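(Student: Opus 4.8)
The proposition compares the pointwise H\"older behaviour of $\mclh$ at $a\in\slowrec$ with the asymptotic return count $\Gamma_n(a)$, and the natural vehicle is the exact relation \eqref{eq:fair distribution differ} from Lemma~\ref{lem:entropy difference} together with the uniform H\"older bound for the distributions $F_a$ in Proposition~\ref{prop:fair meas holder}. The strategy is to fix $b$ close to $a$, locate the largest $n$ for which $(a,b)$ (or $(b,a)$) sits inside a single lap $J$ of $\varphi_n$, and then read off $|\mclh(b)-\mclh(a)|$ from \eqref{eq:fair distribution differ}: the left side is $\sim A_{n,J}\cdot(\mclh(b)-\mclh(a))$ with $|A_{n,J}|$ between $1$ and $2$, while $|B_{n,J}|=2^{-\Gamma_{n-1}(a)}$ by Lemma~\ref{lem:entropy difference}. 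So up to a bounded factor, $|\mclh(b)-\mclh(a)|$ is comparable to $2^{-\Gamma_{n-1}(a)}\cdot|F_a(c_n(a))-F_b(c_n(b))|$, and the last factor must be controlled from both sides.

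\textbf{Key steps, in order.} First I would set up the correspondence between scales: for $b$ near $a$, let $n=n(b)$ be maximal with $(a,b)$ contained in a lap $J$ of $\varphi_n$. Since $a\in\slowrec$, the lap of $\varphi_n$ containing $a$ has radius $r_n^{(i)}(a)\ge(\theta a^{-1})^n$ for every $\theta<1$ and all large $n$; combined with the distortion estimate \eqref{eq:distortion} (which gives $|\varphi_n(J)|\asymp a^n|J|$), this pins down $|b-a|$ between roughly $(\theta a^{-1})^n$ and $a^{-n}$ up to the polynomially-growing number of laps, so that $\frac{1}{n}\log|b-a|^{-1}\to\log a$ along the relevant sequences. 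Second, I would get the two-sided estimate on $|\mclh(b)-\mclh(a)|$. The upper bound is essentially the computation already carried out in \S\ref{sse:interval proof}: $|F_a(\varphi_n(a))-F_b(\varphi_n(b))|$ is bounded using Proposition~\ref{prop:fair meas holder} and \eqref{eq:distortion}, and $|B_{n,J}|\le 2^{1-(1-1/r)n}$ suffices for the $\limsup\le$ and $\liminf\le$ directions once we know $\Gamma_{n-1}(a)/n$ has the claimed $\limsup$/$\liminf$. For the lower bound I need that $|F_a(c_n(a))-F_b(c_n(b))|$ cannot be too small: here one exploits that at the boundary of the lap $\tc:=c_n$ equals $c$ or $\hc_2$, so $\varphi_n(J)$ has a definite proportion of its length on one side of $\tc$, and $F_a$ has no atoms and (again by Proposition~\ref{prop:fair meas holder} or rather a matching lower bound from Lemma~\ref{lem:conformal}, part (2)) assigns mass at least a fixed power of the length to such an interval; pushing forward by $f_a^{-n}$ via conformality, $|\mclh(b)-\mclh(a)|\ge$ const $\cdot\,2^{-\Gamma_{n-1}(a)}\cdot|\varphi_n(J)|^{\alpha(r,a)}$. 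Third, I would take $\log$, divide by $n$, and let $b\to a$: the factor $2^{-\Gamma_{n-1}(a)}$ contributes $-\frac{\log 2}{n}\Gamma_{n-1}(a)\to -\log 2\cdot(\text{lim sup or lim inf of }\Gamma_n(a)/n)$, the distortion factor $|\varphi_n(J)|^{\alpha}$ and $|b-a|^{-1}$ both contribute powers of $a^{n}$ that combine (using $\alpha(r,a)=\frac{(r-1)\log 2}{r\log a}$ only to confirm positivity/finiteness, but in fact they cancel in the ratio up to $o(n)$), leaving exactly $\frac{\log 2}{\log a}\cdot\limsup\frac{1}{n}\Gamma_n(a)$ and likewise with $\liminf$. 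A small separate point: one must check the map $b\mapsto n(b)$ is ``cofinal enough'', i.e. every large $n$ is realized and consecutive values of $|b-a|$ for $n$ and $n+1$ differ by a sub-exponential factor, so that passing between the discrete variable $n$ and the continuous variable $b$ does not disturb the $\limsup$/$\liminf$.

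\textbf{Main obstacle.} The delicate point is the lower bound $|F_a(c_n(a))-F_b(c_n(b))|\gtrsim|\varphi_n(J)|^{\alpha}$. The upper bound side essentially duplicates \S\ref{sse:interval proof}, but for the lower bound I genuinely need that the fair measure of an interval is not much \emph{smaller} than a power of its length, and that the two distributions $F_a$, $F_b$ do not conspire to nearly agree at the two endpoints $c_n(a)$, $c_n(b)$. The clean way is: by maximality of $n$, one of $a,b$ — say we arrange $b$ on the appropriate side — has $c_n$ landing exactly at $c$ or $\hc_2$, while the interval between $c_n(a)$ and $c_n(b)$ is one of the two halves of $\varphi_n(J)$ and hence has length $\gtrsim|\varphi_n(J)|$; then apply part (2) of Lemma~\ref{lem:conformal} backwards (an interval of length $\ell$ near the critical value pulls back under $f_a^{r}$ to gain a factor $2^{r-1}$ in $\mu_a$-measure, iterate) to get $\mu_a$ of that interval $\gtrsim \ell^{\alpha(r,a)}$, uniformly for $a\in[\bma_r,2]$. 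One also needs to handle the correction term $\ve=F_{a_i}(\hc_2(a_i))-F_{a_i}(\hc_2(a))$ appearing when $\tc=\hc_2$, exactly as in \S\ref{sse:interval proof}, showing it is a lower-order perturbation since $|\hc_2(a_i)-\hc_2(a)|=|a_i-a|$ is much smaller than $|\varphi_n(J)|$. Once this two-sided comparison $|\mclh(b)-\mclh(a)|\asymp 2^{-\Gamma_{n-1}(a)}|\varphi_n(J)|^{\alpha}$ (with multiplicative constants depending only on $r$) is in hand, the rest is the bookkeeping of logarithms described above, and the identity $a^{\alpha(r,a)}=2^{1-1/r}$ only enters to confirm that the power-of-$a$ terms are harmless.
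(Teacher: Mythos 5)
Your architecture matches the paper's: work with \eqref{eq:fair distribution differ} at lap endpoints, use $|B_{n,J}|=2^{-\Gamma_{n-1}(a)}$ from Lemma~\ref{lem:entropy difference}, use the slow-recurrence condition to make $|\varphi_n(a)-\varphi_n(b)|$ subexponentially small, and control $|F_a(\varphi_n(a))-F_a(\varphi_n(b))|$ from above via Proposition~\ref{prop:fair meas holder} and from below via a polynomial lower bound on the fair measure of intervals. The gap is in that last lower bound. You claim $\mu_a(J)\gtrsim |J|^{\alpha(r,a)}$ by ``applying Lemma~\ref{lem:conformal}(2) backwards and iterating''. This estimate is false and the proposed derivation does not produce it. It is false because, combined with Proposition~\ref{prop:fair meas holder}, it would make $\mu_a$ Ahlfors regular of dimension $\alpha(r,a)$, hence $\dim_H(\mu_a)=\alpha(r,a)$; but $\dim_H(\mu_a)=h_{\mu_a}(f_a)/\log a=2\log 2\cdot\mclh(a)/\log a$, and forcing this to equal $\frac{(r-1)\log 2}{r\log a}$ would make $\mclh\equiv\frac{r-1}{2r}$ on $(\bma_r,\bma_{r+1})$, contradicting strict monotonicity of $\mclh$. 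The derivation fails because reversing the conformal inequality gives $\mu_a(J)\ge 2^{-r}\mu_a(f_a^rJ)$ only while $f_a^r$ remains injective on the iterated images of $J$; once the orbit of $J$ covers $c$ you must split, and the loss is no longer $2^{-r}$ per $r$ steps --- exactly the issue that degrades the exponent. The paper's Lemma~\ref{lem:inverse holder} proves the much weaker bound $\mu_a(J)\ge C|J|^4$ by an induction that explicitly handles the critical passage, and the key realization is that \emph{any} fixed polynomial exponent suffices: slow recurrence gives $|\varphi_{t_n-1}(a)-\varphi_{t_n-1}(b_n)|\ge c\,\theta^{t_n}$ for every $\theta<1$, so its fourth power is still subexponential in $t_n$ and contributes nothing to the exponent (this is \eqref{eq:tempered}). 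Your lower bound should be replaced by a proof of such a crude estimate; the exponent $\alpha(r,a)$ has no role in it.

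A second, smaller issue: for a generic $b$ the parameter where $\varphi_n$ hits $c$ or $\hc_2$ lies strictly between $a$ and $b$, so neither $c_n(a)$ nor $c_n(b)$ is the special point and the two distributions $F_a$ and $F_b$ cannot be disentangled directly from \eqref{eq:fair distribution differ}. The paper resolves this by establishing the asymptotics only along the sequence $b_n^{(i)}$ of lap endpoints of $a$, for which $\log\Delta_{n+1}^{(i)}/\log\Delta_n^{(i)}\to 1$, and then invoking the monotonicity of $\mclh$ (the Dobbs--Mihalache observation stated just before the Claim) to recover the full $\limsup$ and $\liminf$ over all $b\to a$. Your ``cofinal enough'' remark points in this direction, but without monotonicity of $\mclh$ the passage from a subexponentially spaced sequence of $b$'s to all $b$ does not preserve $\limsup$ and $\liminf$; this reduction needs to be stated and used explicitly.
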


\begin{rmk}
  From the proof it is easily seen that the assumptions $a\in \cap_{0<\theta<1}\msca_1(\theta)$  and $a\in \cap_{0<\theta<1}\msca_2(\theta)$ correspond to handling $b\to a^-$ and $b\to a^+$ respectively.
\end{rmk}
To prove Proposition~\ref{prop:pointwise}, we adopt the basic strategy in \cite{DM19} to take advantage of monotonicity of $\mclh$, which begins with a simple observation as follows. Let $h:I\to\mbbr$ be a monotone function on an interval $I$.  Let $\Delta_n\searrow 0$ be such that $\lim\limits_{n\to\infty }\frac{\log\Delta_{n+1}}{\log\Delta_n}=1$. Then for $i=1,2$,
\[
\limsup_{\Delta\to 0^+}\frac{\log|h(a+(-1)^i\Delta)-h(a)|}{\log \Delta} = \limsup_{n\to\infty}\frac{\log|h(a+(-1)^i\Delta_n)-h(a)|}{\log \Delta_n},
\]
and
\[
\liminf_{\Delta\to 0^+}\frac{\log|h(a+(-1)^i\Delta)-h(a)|}{\log \Delta} = \liminf_{n\to\infty}\frac{\log|h(a+(-1)^i\Delta_n)-h(a)|}{\log \Delta_n}.
\]
Applying the fact above to $h=\mclh$,  Proposition~\ref{prop:pointwise} is reduced to the following statement.

\begin{clm}
Given $a\in\slowrec$,  there exist two sequences $\Delta_n^{(i)}\searrow 0$, $i=1,2$, satisfying
\[
   \lim_{n\to\infty}\frac{\log\Delta_{n+1}^{(i)}}{\log\Delta_n^{(i)}}=1,
\]
such that for $b_n^{(i)}:=a+(-1)^i \Delta_n^{(i)}$, we have:
  \[
   \limsup_{n\to\infty} \frac{\log |\mclh(b_n^{(i)})-\mclh(a)|}{\log\Delta_n^{(i)}} = \frac{\log 2}{\log a}\cdot \limsup_{n\to\infty} \frac{1}{n}\Gamma_n(a),
  \]
  \[
   \liminf_{n\to\infty} \frac{\log |\mclh(b_n^{(i)})-\mclh(a)|}{\log\Delta_n^{(i)}} = \frac{\log 2}{\log a}\cdot \liminf_{n\to\infty} \frac{1}{n}\Gamma_n(a).
  \]
\end{clm}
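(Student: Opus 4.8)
The plan is to build the sequences $\Delta_n^{(i)}$ directly from the lap structure of the maps $\varphi_n$, using Lemma~\ref{lem:entropy difference} to convert differences of $\mclh$ into differences of distribution functions, and then the uniform Hölder control of Proposition~\ref{prop:fair meas holder} together with the distortion estimate \eqref{eq:distortion} to pin down the size of those differences. Concretely: for each large $n$ let $J_n^{(i)}$ denote the side of the lap of $\varphi_n$ at $a$ reaching distance $r_n^{(i)}(a)$ from $a$ (using the notation of Definition~\ref{def:slow rec}), and set $\Delta_n^{(i)}:=r_n^{(i)}(a)$ and $b_n^{(i)}:=a+(-1)^i\Delta_n^{(i)}$. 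The point $b_n^{(i)}$ is then an endpoint of a lap, at which $\varphi_n$ hits $c$ or $\hc_2$. First I would apply \eqref{eq:fair distribution differ} on the lap containing $[a,b_n^{(i)}]$: writing $n'$ for the lap index of that lap (so $n'\ge n$ but $n'/n\to 1$ will follow), we get
\[
-A_{n',J}\bigl(\mclh(b_n^{(i)})-\mclh(a)\bigr)=B_{n',J}\bigl(F_a(c_{n'}(a))-F_{b_n^{(i)}}(c_{n'}(b_n^{(i)}))\bigr),
\]
and by Lemma~\ref{lem:entropy difference} the coefficients satisfy $|A_{n',J}|\asymp 1$ and $|B_{n',J}|=2^{-\Gamma_{n'-1}(a)}$.

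Next I would estimate the right-hand side. The term $F_a(c_{n'}(a))-F_{b_n^{(i)}}(c_{n'}(b_n^{(i)}))$ is, up to a bounded factor and the same kind of endpoint correction $\ve$ already handled in \S\ref{sse:interval proof}, comparable to $|c_{n'}(a)-c_{n'}(b_n^{(i)})|^{\alpha}$ for a suitable fixed Hölder exponent $\alpha\in[\tfrac12,1]$; since $\varphi_{n'}$ is monotone on the lap and $|b_n^{(i)}-a|=\Delta_n^{(i)}$, by \eqref{eq:distortion} this is comparable to $(a^{n'}\Delta_n^{(i)})^{\alpha}$. Now the defining property of $\slowrec$: because $a\in\bigcap_{\theta}\msca_i(\theta)$, for every $\theta<1$ eventually $\Delta_n^{(i)}=r_n^{(i)}(a)\ge(\theta a^{-1})^n$, while trivially $\Delta_n^{(i)}\le 2 a^{-n}$ from \eqref{eq:distortion} (a monotone lap has length $\lesssim a^{-n}$); hence $\tfrac1n\log\Delta_n^{(i)}\to-\log a$, which forces both $n'/n\to1$ and $a^{n'}\Delta_n^{(i)}$ to be subexponential in $n$, i.e. $\tfrac1n\log(a^{n'}\Delta_n^{(i)})\to 0$. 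Consequently $\tfrac1n\log|F_a(c_{n'}(a))-F_{b_n^{(i)}}(c_{n'}(b_n^{(i)}))|\to 0$ as well (one must check the $\ve$-correction is also subexponential, which follows from Proposition~\ref{prop:fair meas holder} applied to an interval of length $\le\Delta_n^{(i)}$). Therefore
\[
\frac{\log|\mclh(b_n^{(i)})-\mclh(a)|}{\log\Delta_n^{(i)}}
=\frac{-\Gamma_{n'-1}(a)\log 2+o(n)}{-n\log a+o(n)}
\longrightarrow\ \frac{\log 2}{\log a}\cdot\Bigl(\text{limit of }\tfrac1n\Gamma_{n}(a)\Bigr),
\]
and since $\Gamma$ is nondecreasing and $n'-1\ge n-1$ with $n'/n\to1$, replacing $\Gamma_{n'-1}$ by $\Gamma_n$ changes nothing in the $\limsup$ or $\liminf$; this gives both displayed equalities in the Claim, and $\tfrac1n\log\Delta_{n+1}^{(i)}/\tfrac1n\log\Delta_n^{(i)}\to1$ is immediate from $\tfrac1n\log\Delta_n^{(i)}\to-\log a\ne 0$.

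The main obstacle I anticipate is the bookkeeping at the lap endpoint — that is, controlling the passage from the abstract index $n$ (for which $r_n^{(i)}(a)$ is defined) to the actual lap $J$ and its index $n'$ appearing in \eqref{eq:fair distribution differ}, and in particular making sure that the endpoint-type correction term $\ve=F_{b_n^{(i)}}(\hc_2(b_n^{(i)}))-F_{b_n^{(i)}}(\hc_2(a))$ does not dominate. This is exactly the subtlety already negotiated in \S\ref{sse:interval proof}, and the same combination of Proposition~\ref{prop:fair meas holder} (to bound $|\ve|\le C|b_n^{(i)}-a|^{\alpha}=C(\Delta_n^{(i)})^{\alpha}$) and the lower bound $\Delta_n^{(i)}\ge(\theta a^{-1})^n$ shows $|\ve|$ is subexponential, hence harmless. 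A secondary technical point is confirming that $F_a(c_{n'}(a))-F_a(c_{n'}(b_n^{(i)}))$ is not only $\lesssim$ but also $\gtrsim$ a non-subexponential quantity when needed for the lower bound on $|\mclh(b_n^{(i)})-\mclh(a)|$ — but in fact only the upper estimate and the exponential lower bound $\Delta_n^{(i)}\gtrsim(\theta a^{-1})^n$ are needed, because $|F_a(c_{n'}(a))-F_{b_n^{(i)}}(c_{n'}(b_n^{(i)}))|$ enters only inside an $o(n)$ term in the exponent; the dominant behavior is entirely governed by $|B_{n',J}|=2^{-\Gamma_{n'-1}(a)}$ against $|b_n^{(i)}-a|=\Delta_n^{(i)}$, which is the whole reason the parameter exclusion set $\slowrec$ was designed the way it was.
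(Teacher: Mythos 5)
Your overall plan mirrors the paper's: build $\Delta_n^{(i)}$ from the lap radii $r_n^{(i)}(a)$, convert $\mclh(b_n^{(i)})-\mclh(a)$ into a distribution-function difference via Lemma~\ref{lem:entropy difference}/\eqref{eq:fair distribution differ}, bound the endpoint correction $\ve$ via Proposition~\ref{prop:fair meas holder}, and use the defining property of $\slowrec$ to pin $\tfrac1n\log\Delta_n^{(i)}\to-\log a$. So far so good (and your choice $\Delta_n^{(i)}=r_n^{(i)}(a)$ for all $n$, versus the paper's passage to the jump subsequence, is an inessential variant).

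However, there is a genuine gap, and it is exactly the point you dismiss as a ``secondary technical point.'' You claim that ``only the upper estimate \ldots\ is needed'' for the term $D_n:=F_a(c_{n'}(a))-F_a(c_{n'}(b_n^{(i)}))$, arguing that it only enters inside an $o(n)$ exponent. That conclusion is what must be \emph{proved}, and a one-sided (Hölder upper) bound on $D_n$ cannot prove it. The identity from \eqref{eq:fair distribution differ} gives $|\mclh(b_n^{(i)})-\mclh(a)|\asymp |B|\cdot|D_n+\ve|$, and $\log|\mclh(b_n^{(i)})-\mclh(a)|/\log\Delta_n^{(i)}$ picks up a term $\log|D_n+\ve|/\log\Delta_n^{(i)}$ that vanishes only if $\tfrac1n\log|D_n+\ve|\to 0$. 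The Hölder upper bound of Proposition~\ref{prop:fair meas holder} (and the trivial $|D_n|<1$) only gives $\limsup_n\tfrac1n\log|D_n|\le 0$. For the matching $\liminf_n\tfrac1n\log|D_n|\ge 0$ you need a \emph{lower} bound on $|D_n|$, and this is precisely why the paper introduces the elementary inverse-Hölder estimate (Lemma~\ref{lem:inverse holder}): $\mu_a(J)\ge C|J|^4$, which yields $|D_n|\ge C_1|\varphi_{n'}(a)-\varphi_{n'}(b_n^{(i)})|^4$ and then, via \eqref{eq:distortion} and $a\in\slowrec$, the required subexponential lower bound. Without such a lower bound, nothing forbids $|D_n|$ from decaying superexponentially (say like $e^{-n^2}$), in which case $|\mclh(b_n^{(i)})-\mclh(a)|\ll|B|$, the ratio $\log|\mclh(b_n^{(i)})-\mclh(a)|/\log\Delta_n^{(i)}$ would blow up, and the claimed equalities would fail. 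One also needs this subexponential lower bound on $|D_n|$ to guarantee that the exponentially small correction $\ve$ (of order $\le C\Delta_n^{1/2}$) cannot cancel $D_n$. In short: the lower bound is not optional; $\slowrec$ alone does not supply it, and you must invoke something like Lemma~\ref{lem:inverse holder}.

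A smaller slip: the auxiliary lap index you call $n'$ satisfies $n'\le n$, not $n'\ge n$ (the boundary point $b_n^{(i)}$ is first created at some level $n'\le n$, and the lap of $\varphi_{n'-1}$ containing $a$ strictly contains $b_n^{(i)}$ so that \eqref{eq:fair distribution differ} can be applied to the pair $a,b_n^{(i)}$). This is a bookkeeping error rather than a conceptual one, since $n'/n\to 1$ still holds by the argument you outline, but the inequality direction should be corrected.
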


 We need the following simple fact in the proof of the claim.

\begin{lem}\label{lem:inverse holder}
  There exists $C>0$ such that for any $a\in [\sqrt{2},2]$ and any interval $J\subset I_a$\:\!, $\mu_a(J)\ge C|J|^4$ holds\footnote{The exponent ``$4$" here is far from being optimal and it can be improved by using a similar argument to the proof of Proposition~\ref{prop:fair meas holder}. }.
\end{lem}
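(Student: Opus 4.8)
The plan is to establish a uniform lower bound on the fair measure of an interval by combining three ingredients: (i) the lower bound on expansion of $\mu_a$ under iteration coming from the conformal property, (ii) the fact that $f_a$ expands lengths by a definite factor, and (iii) a ``finite time to spread out'' estimate that turns any small interval into one of controlled length after boundedly many iterates relative to its size. First I would observe that $\mu_a$ has no atoms and is fully supported on $I_a$ (as recorded in \S~\ref{se:fair}), so there is a uniform $\delta_0>0$ with $\mu_a(L)\ge \delta_0$ for every interval $L\subset I_a$ with $|L|\ge \tfrac12$; this holds uniformly in $a$ by the continuity of the map $(a,x)\mapsto F_a(x)$ established in the corollary following Proposition~\ref{prop:fair meas holder}. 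This handles the ``large $J$'' case.

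For the induction step, suppose $J\subset I_a$ is an interval with $|J|<\tfrac12$. If $f_a$ is injective on $J$ — equivalently $c\notin\inr J$ — then $|f_aJ|=a|J|\ge\sqrt2\,|J|$, and since $f_a$ is injective on $J$ the conformal relation \eqref{eq:fair conformal} gives $\mu_a(f_aJ)=\int_J j_a\,\dif\mu_a\ge \mu_a(J)$; iterating, after $m\sim \tfrac{\log(1/|J|)}{\log\sqrt2}$ steps on which injectivity persists we reach an interval of length $\ge\tfrac12$. The obstruction is of course that injectivity need not persist: $c$ may fall into some forward image. So the second case is: $c\in\inr(f_a^k J)$ for some $k\ge 0$. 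Here I would split $J$ at the unique preimage point $x\in J$ with $f_a^k x=c$ into $J=J_1\cup J_2$; then $f_a^{k+1}$ is injective on each $J_i$, and one of the two, say $J_1$, has $|J_1|\ge |J|/2$. We then restart the argument on $J_1$. The key point making this terminate is that each time we are forced to cut, we lose at most a factor $2$ in length but we have already gained a factor $a^k\ge 1$ in length from the $k$ iterates before hitting $c$; and since $f_a$ has a uniform lower bound on $|f_a'|=a\ge\sqrt2$, the net effect over any block of $N$ iterates is that length grows by at least $(\sqrt2)^N$ while measure shrinks by at most the number of cuts times $1$, which is controlled.

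To make the bookkeeping clean I would instead argue directly by a single induction on a geometric scale, mirroring the proof of Proposition~\ref{prop:fair meas holder}: set $\delta_n=\tfrac12\,2^{-n}$ and show by induction on $n$ that $\mu_a(J)\ge C|J|^{4}$ whenever $|J|\ge\delta_n$, with $C$ chosen at the base case $n=0$ via $\delta_0$ above. For the step, given $\delta_{n+1}\le |J|<\delta_n$: if $f_a$ is injective on $J$, use $\mu_a(J)\le\mu_a(f_aJ)$ and $|f_aJ|=a|J|\ge\sqrt2|J|\ge\delta_n$, so $\mu_a(J)\le$ wait — we need the other direction; rather $\mu_a(J)\ge$ is what we want, and injectivity gives $\mu_a(f_a J)\ge\mu_a(J)$, i.e. $\mu_a(J)\le\mu_a(f_aJ)$, the wrong way. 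So in the injective case one must instead push \emph{forward} and use that $f_a^{-1}$ is two-to-one: $\mu_a(J)=\mu_a\big(f_a(J)\big)-\mu_a(\text{other preimage piece})$ is not directly helpful either. The correct move: since $\mu_a$ is $f_a$-invariant, $\mu_a(J)\ge \mu_a\big(f_a^{-1}J\cap K\big)$ for any branch $K$, and $f_a^{-1}$ restricted to a branch contracts length by exactly $1/a\le 1/\sqrt2$; so pulling back rather than pushing forward, any $J$ of length $\ge\delta_{n+1}$ has a preimage interval of length $\ge\delta_{n+1}/a\ge \delta_{n+2}/\!\sqrt2$... this still drifts the wrong way. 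The cleanest route, which I would adopt, is: fix the smallest $m$ with $(\sqrt2)^m|J|\ge\tfrac12$; among $J,f_aJ,\dots,f_a^{m-1}J$ at most... — here I reuse Lemma~\ref{lem:one-step}'s mechanism. Using $|J|<\delta$ and the argument of Lemma~\ref{lem:one-step}, there is a bounded $t=t(r)$ and an interval $J'\subset J$ with $|J'|\ge|J|/2$ on which $f_a^t$ is injective and $|f_a^tJ'|=a^t|J'|$; iterating this $\sim \log(1/|J|)$ times multiplies the length by a definite factor $>1$ at each stage while dividing the measure by at most $2^{s}$ with $s\le 3r+1$, and since $a^t\ge (\sqrt2)^{r+1}$ one checks $2^{-s}(a^t)^{4}\ge 1$ for the exponent $4$, closing the induction exactly as in Proposition~\ref{prop:fair meas holder}. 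The main obstacle is precisely this sign/direction issue — the conformal property expands measure under $f_a$, so to bound $\mu_a(J)$ \emph{below} one must iterate forward and compare $\mu_a(J)$ to $2^{-s}\mu_a(f_a^t J)$ with $f_a^t J$ of macroscopic size, which forces the same case analysis (interval hitting $c$, interval meeting $\hat\math{I}$) as Lemma~\ref{lem:one-step}; the generous exponent $4$ is chosen exactly so that the resulting inequality $4\log a^t\ge s\log 2$ holds with room to spare, which is why the footnote remarks it is far from optimal.
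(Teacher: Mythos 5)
Your overall plan --- induct on scale and push an interval forward until it becomes macroscopic --- is the right one, and it is also what the paper does. But the proposal contains a genuine gap that you yourself flag and never actually resolve: the complaint ``injectivity gives $\mu_a(f_aJ)\ge\mu_a(J)$, the wrong way'' shows you only read half of the conformal relation. Since $j_a\le 2$ everywhere, \eqref{eq:fair conformal} gives \emph{both} $\mu_a(f_aJ)\ge\mu_a(J)$ and $\mu_a(f_aJ)\le 2\mu_a(J)$ whenever $f_a$ is injective on $J$, and it is the second inequality, $\mu_a(J)\ge\tfrac12\mu_a(f_aJ)$ (more generally $\mu_a(J)\ge 2^{-n}\mu_a(f_a^nJ)$ for any $J$ and $n$), that gives the lower bound you want. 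The exponent $4$ is chosen precisely so that one application of $f_a$ away from $c$, which multiplies $|J|^4$ by $a^4\ge 4$ while dividing $\mu_a(J)$ by at most $2$, strictly increases the ratio $\mu_a(J)/|J|^4$; this is the entire mechanism, and it makes the detour through inverse branches unnecessary.

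The patch you then reach for, reusing Lemma~\ref{lem:one-step}, fails on two counts. First, that lemma asserts $\mu_a(f_a^tJ_i)\ge 2^s\mu_a(J_i)$, i.e.\ $\mu_a(J_i)\le 2^{-s}\mu_a(f_a^tJ_i)$ --- an upper bound on $\mu_a(J_i)$, the wrong direction again; the ``dividing the measure by at most $2^s$'' step is unsupported, and the honest bound from $j_a\le 2$ is $\mu_a(J_i)\ge 2^{-t}\mu_a(f_a^tJ_i)$. Second, even with $2^{-t}$ in place of $2^{-s}$, the arithmetic does not close: in Case~1 of Lemma~\ref{lem:one-step} with $r=2$ you have $(s,t)=(2,3)$, and keeping only the larger piece ($|J'|\ge|J|/2$) gives $2^{-t}a^{4t}\cdot 2^{-4}\ge 2^{-3}\cdot 2^{6}\cdot 2^{-4}=2^{-1}<1$ at $a=\sqrt2$, so the ratio $\mu_a/|J|^4$ may drop. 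The paper avoids all of this with a short direct case analysis: if $c\notin J$, apply $f_a$ once and use $\mu_a(J)\ge\tfrac12\mu_a(f_aJ)$, $|f_aJ|=a|J|$; if $c\in J$ and both $f_aJ\subset[\hc_2,c_1]$ and $f_a^2J\subset[c_2,c]$, apply $f_a^3$ --- the middle step has $j_a\equiv 1$, so $\mu_a(f_a^3J)\le 4\mu_a(J)$, while $|f_a^3J|\ge\tfrac{a^3}{2}|J|\ge\sqrt2\,|J|$ --- and otherwise $J$ is macroscopic and one is done. You should rebuild the step along those lines; the base-case observation via the joint continuity of $(a,x)\mapsto F_a(x)$ is fine and non-circular, but everything after it needs the $j_a\le 2$ direction made explicit.
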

\begin{proof}
Let $J$ be a subinterval of $I_a$. Then the following are evident.
  \begin{itemize}
    \item If $c\notin J$, then $|f_aJ|=a|J|\ge \sqrt{2} |J|$ and $|\mu_a(f_aJ)|\le 2\mu_a(J)$, so that $\frac{\mu_a(J)}{|J|^4}\ge 2\cdot\frac{\mu_a(f_aJ)}{|f_aJ|^4}$.
    \item If $c\in J$, $f_a(J)\subset [\hc_2(a), c_1]$ and  $f_a^2(J)\subset [c_2(a), c]$, then $\mu_a(f_a^3J)= 4 \mu_a (J)$ and $|f_a^3J|\ge \frac{a^3}{2}|J|\ge \sqrt{2}|J|$, so that $\frac{\mu_a(J)}{|J|^4}\ge \frac{\mu_a(f_aJ)}{|f_aJ|^4}$.
    \item If  either $f_a(J)\supset [\hc_2(a), c_1]$ or  $f_a^2(J)\supset [c_2(a), c]$, then  $\frac{\mu_a(J)}{|J|^4}\ge C$ for some absolute constant $C>0$.
  \end{itemize}
By induction on the length of $J$, the conclusion follows easily from the facts above.
\end{proof}

\begin{proof}[Proof of claim]

Fix $a\in\slowrec$ below. For $r_t^{(i)}=r_t^{(i)}(a)$ introduced in Definition~\ref{def:slow rec}, denote
\[
a_t^{(i)}:=a+(-1)^i r_t^{(i)},\quad J_t^{(1)}:=(a-r_t^{(1)},a)=(a_t^{(1)},a), \quad J_t^{(2)}:=(a,a+r_t^{(2)})=(a,a_t^{(2)}).
\]
Given $t\ge 3$, let
\[
s_i(t)=\min\{s>t: J_t^{(i)} \text{ is not contained in a lap of } \varphi_s \}.
\]
By definition,
\[
\varphi_{s_i(t)-1}\big(a_{s_i(t)}^{(i)}\big)=c  \ \text{ or }\ \hc_2(a_{s_i(t)}^{(i)}).
\]
From $a\in \msca_i(\theta)$ we know that $r_{t}^{(i)}\ge (\theta a^{-1})^t$ holds for large $t$; by \eqref{eq:distortion} we have $r_{s_i(t)}^{(i)}\le C a^{-s_i(t)}$ for any $t\ge 3$, where $C>0$ is an absolute constant. Combing these facts with $r_{t}^{(i)}\ge r_{s_i(t)}^{(i)}$ yields that
\[
\frac{\log a}{\log a-\log\theta} \cdot \limsup_{t\to\infty}\frac{s_i(t)}{t} \le \limsup_{t\to\infty}\frac{\log r_{s_i(t)}^{(i)}}{\log r_{t}^{(i)}} \le  1.
\]
Letting $\theta\to 1^-$, we obtain that
\begin{equation}\label{eq:ratio time}
  \lim_{t\to\infty}\frac{s_i(t)}{t}= \lim_{t\to\infty}\frac{\log r_{s_i(t)}^{(i)}}{\log r_{t}^{(i)}}=1,\quad i=1,2.
\end{equation}

Let $t_0^{(i)}:=3$ and define natural numbers $t_n^{(i)}\nearrow \infty$ inductively:  once $t_n^{(i)}$ is defined for some $n\ge 0$, let  $t_{n+1}^{(i)}:=s_i(t_n^{(i)})$. Now let $\Delta_n^{(i)}:=r_{t_n^{(i)}}$ for $n\ge 0$. By definition and \eqref{eq:ratio time},  $\lim\limits_{n\to\infty}\frac{\log\Delta_{n+1}^{(i)}}{\log\Delta_n^{(i)}}=1$ holds.

To complete the proof,  let $L_i$ denote the lap of $\varphi_{t_n^{(i)}-1}$ containing $a$. Then for $b_n^{(i)}=a+(-1)^i \Delta_n^{(i)}$,
\[
\tc_i :=c_{t_n^{(i)}-1}(b_n^{(i)}) = c ~~\text{or}~~ \hc_2( b_n^{(i)}).
\]
By \eqref{eq:fair distribution differ}, for $A_i:=A_{t_n^{(i)}-1,L_i}$ and $B_i:=B_{t_n^{(i)}-1,L_i}$, we have:

\[
 - A_i\cdot\Big(\mclh(a)-\mclh(b_n^{(i)})\Big)=B_i \cdot\Big( \big[F_a(c_{t_n^{(i)}-1}(a)) - F_{a}(c_{t_n^{(i)}-1}(b_n^{(i)}) )\big]  + \big[ F_{a}(\tc_i) - F_{b_n^{(i)}}(\tc_i)\big] \Big).
\]

Repeating we did in \S~\ref{sse:interval proof}, the line above can be rewritten as:
\[
- (A_i+\eta_i B_i) \Big(\mclh(a)-\mclh(b_n^{(i)})\Big) = B_i \cdot \Big(  \big[F_a(c_{t_n^{(i)}-1}(a)) - F_{a}(c_{t_n^{(i)}-1}(b_n^{(i)}) )\big] + \ve_i \Big),
\]
where $\eta_i$ and $\ve_i$ are defined as follows.
\begin{itemize}
  \item If $\tc_i=c$, then $\eta_i=1$ and $\ve_i=0$.
  \item If $\tc_i=\hc_2(b_n^{(i)})$, then $\eta_i=2$ and $\ve_i=F_{a}(\hc_2(b_n^{(i)}))-F_{a}(\hc_2(a))$.
\end{itemize}
According to Lemma~\ref{lem:entropy difference}, we have:
\[
\frac{1}{2}< |A_i+\eta_i B_i|< 3  \quad\text{and}\quad \log |B_i|=-\,\Gamma_{t_n^{(i)}-1}(a)\cdot \log 2.
\]
By Proposition~\ref{prop:fair meas holder} and the definition of $\ve_i$, there exists a constant $C>1$ only dependent on $r$, such that
\[
|\ve_i|\le C\cdot|b_n^{(i)}-a|^{\alpha(r,a)}\le C\cdot\big(\Delta_n^{(i)}\big)^{\frac{1}{2}}.
\]
Also note that
\[
  \lim_{n\to\infty}\frac{\log \Delta_n^{(i)}}{t_n^{(i)}} =-\log a.
\]
Combining all the four lines of displayed equations above, to complete the proof, it remains to show that
\begin{equation}\label{eq:tempered}
  \lim_{n\to\infty }\frac{\log |F_a(\varphi_{t_n^{(i)}-1}(a)) - F_a(\varphi_{t_n^{(i)}-1}(b_n^{(i)}))| }{ t_n^{(i)}} =0.
\end{equation}
To verify \eqref{eq:tempered}, firstly, note that
  \[
  C_1\cdot|\varphi_{t_n^{(i)}-1}(a) - \varphi_{t_n^{(i)}-1}(b_n^{(i)})|^4 \le |F_a(\varphi_{t_n^{(i)}-1}(a)) - F_{a}(\varphi_{t_n^{(i)}-1}(b_n^{(i)}))|<1,
  \]
where the ``$\le$" is due to Lemma~\ref{lem:inverse holder}  and $C_1>0$ is an absolute constant. Secondly, by \eqref{eq:distortion}, there exists another absolute constant $C_2>1$ such that
  \[
  C_2^{-1} a^{t_n^{(i)}} \Delta_n^{(i)} \le |\varphi_{t_n^{(i)}-1}(a) - \varphi_{t_n^{(i)}-1}(b_n^{(i)})| \le C_2 a^{t_n^{(i)}} \Delta_n^{(i)}.
  \]
Then   \eqref{eq:tempered} follows and the proof is completed.
\end{proof}

\subsection{Extreme values of $\gamma$}\label{sse:extremal}

This subsection is devoted to the proof of the second assertion in  Theorem~\ref{thm:pointwise}, as well as the second assertion in  Theorem~\ref{thm:interval}.

By Proposition~\ref{prop:window}, the following lemma is self-evident.

\begin{lem}\label{lem:periodic orbit}
Given $r\ge 2$ and $a\in (\mathbbm{a}_r,\mathbbm{a}_{r+1})$,
\[
q_r(a):=\frac{1+a+\cdots+a^{r-1}}{1+a^r}\in (\hc_2(a),c_1)
\]
is an orientation reversing fixed point of $f_a^r$. In other words, $q=q_r(a)$ is determined by
    \[
    f_a(q)<f_a^2(q)<\cdots < f_a^{r-1}(q)< c <\hc_2(a) <f_a^r(q)=q< c_1.
    \]

\end{lem}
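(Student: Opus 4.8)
The plan is to verify directly that $q_r(a)$ is a fixed point of $f_a^r$ and then use the orbit structure from Proposition~\ref{prop:window} to pin down its itinerary and orientation. First I would compute $f_a^k$ applied to a candidate point. Since $f_a(x) = 1 - a|x|$, on the branch where the iterates stay positive we have $f_a(x) = 1 - ax$, so $f_a^k(x) = 1 - a + a^2 - \cdots + (-1)^{k-1}a^{k-1} - (-a)^k x$ as long as no intermediate iterate changes the sign convention. Setting $f_a^r(q) = q$ under the hypothesis that $q > 0$ and the return is orientation reversing (i.e. we land back via the decreasing branch) gives a linear equation. The natural guess is that the orbit $q, f_a(q), \dots, f_a^{r-1}(q)$ stays to the left of $c=0$ after the first step in the pattern $f_a(q) < f_a^2(q) < \cdots < f_a^{r-1}(q) < c$, mirroring the critical orbit pattern $c_2(a) < c_3(a) < \cdots < c_r(a) < c$ established in Proposition~\ref{prop:window}; this forces $f_a$ to act as $x \mapsto 1 - a|x|$ with the turning behavior only at the last step.

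Concretely, I would set $q = q_r(a) = \frac{1 + a + \cdots + a^{r-1}}{1 + a^r}$ and check by a short induction that $f_a^k(q) = q - (1 + a + \cdots + a^{k-1})$ for $0 \le k \le r$ — wait, more carefully: since $q \in (\hc_2(a), c_1) \subset (0,1)$, we get $f_a(q) = 1 - aq$; one then verifies $f_a(q) < 0$ (equivalently $q > 1/a$, which follows from $q > \hc_2(a) = a-1 \ge 1/a$ on the relevant parameter range, or by direct computation), so $f_a^2(q) = 1 + a f_a(q) = 1 + a - a^2 q$, and inductively $f_a^{j+1}(q) = 1 + a f_a^j(q)$ as long as $f_a^j(q) < 0$. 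Unwinding, $f_a^j(q) = (1 + a + \cdots + a^{j-1}) - a^j q$ for $1 \le j \le r-1$, and these are negative precisely because $q = \frac{1 + \cdots + a^{r-1}}{1 + a^r} > \frac{1 + \cdots + a^{j-1}}{a^j}$, which is exactly the inequality $q_r(a) > \hc_2(a)$-type bound that one reads off from the algebra. Then $f_a^r(q) = 1 + a f_a^{r-1}(q) = (1 + a + \cdots + a^{r-1}) - a^r q = q(1 + a^r) - a^r q \cdot$ — here substituting the definition of $q$ gives $f_a^r(q) = q$, and since $f_a^{r-1}(q) < 0 < c$ the last map $f_a$ is applied on its decreasing branch, making the return orientation reversing.

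The remaining point is to check the location statement $q \in (\hc_2(a), c_1)$ and the full chain of inequalities $f_a(q) < f_a^2(q) < \cdots < f_a^{r-1}(q) < c < \hc_2(a) < f_a^r(q) = q < c_1$. The inequalities $f_a^1(q) < f_a^2(q) < \cdots$ follow from the closed formula $f_a^j(q) = (1 + \cdots + a^{j-1}) - a^j q$: the difference $f_a^{j+1}(q) - f_a^j(q) = a^j - a^j(a-1)q \cdot$ — rather, $f_a^{j+1}(q) - f_a^j(q) = a^j(1 - (a-1)q)$, wait let me recompute: it equals $a^j - a^j(a - 1)q$; hmm, I would instead note $f_a^{j+1}(q) - f_a^j(q) = a^j\bigl(1 - (a-1)q\bigr)$ after factoring, which is positive iff $q < \frac{1}{a-1}$, and this holds on $(\bma_r, \bma_{r+1})$ by a direct estimate. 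That $q > \hc_2(a) = a - 1$ and $q < c_1 = 1$ are elementary inequalities in $a$ valid on the interval $(\bma_r, \bma_{r+1})$, which I would dispatch using the defining relation of $\bma_r$ (namely $1 + a + \cdots + a^{r-1} - a^r = a - 1$ at $a = \bma_r$) together with monotonicity in $a$. I do not expect a genuine obstacle here: the lemma asserts it is \emph{self-evident} from Proposition~\ref{prop:window}, and indeed the only real content is matching the sign pattern of the $q$-orbit to that of the critical orbit, after which the fixed-point identity is pure algebra. The mild care needed is ensuring all $r-1$ intermediate iterates land strictly left of $c$ (so no spurious turning occurs), which is where Proposition~\ref{prop:window}'s ordering $c_2(a) < \cdots < c_r(a) < c < c_{r+1}(a) < \hc_2(a)$ is invoked as a template.
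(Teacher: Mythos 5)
Your computational strategy -- compute $f_a^j(q)=(1+a+\cdots+a^{j-1})-a^jq$ on the hypothesized itinerary, read off the fixed-point identity $q_r(1+a^r)=1+\cdots+a^{r-1}$, and observe that the last step is applied on the decreasing branch -- is exactly the detailed version of what the paper leaves implicit when it declares the lemma self-evident from Proposition~\ref{prop:window}. So the route is the same, but there are two slips in your justification of the inequality chain that need correcting.

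First, the claim $\hc_2(a)=a-1\ge 1/a$ is simply false for $a<\tfrac{1+\sqrt5}{2}\approx1.618$, which covers a nontrivial portion of $(\bma_2,\bma_3)$ since $\bma_2=\sqrt2\approx1.414$. The intended inequality $q_r(a)>1/a$ should be checked directly; writing $S_j:=1+a+\cdots+a^{j-1}$, it reduces to $aS_r>1+a^r$, i.e.\ $a+a^2+\cdots+a^{r-1}>1$, which is immediate from $a>1$ and $r\ge2$. Second, you describe the negativity of the intermediate iterates as coming from ``exactly the $q_r(a)>\hc_2(a)$-type bound,'' but this is mislabeled and obscures where each half of the hypothesis enters. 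The bounds $S_j/a^j$ are increasing in $j$, so the binding constraint is $j=r-1$, namely $q_r>S_{r-1}/a^{r-1}$, and a short calculation shows that this is equivalent to $a^{r-1}>S_{r-1}$, which is in turn equivalent both to $q_r<1$ and to $c_r(a)=S_{r-1}-a^{r-1}<c=0$. In other words, the entire left half of the inequality chain (the monotone increase of the iterates, their negativity, and $q_r<c_1$) is governed by $c_r(a)<c$, which Proposition~\ref{prop:window} provides for $a>\bma_r$; it has nothing to do with the bound $q_r>\hc_2(a)$. The latter, $q_r(a)>\hc_2(a)=a-1$, is the only place the upper restriction $a<\bma_{r+1}$ is used -- indeed equality $q_r(\bma_{r+1})=\hc_2(\bma_{r+1})$ holds at the right endpoint, as one can see by unwinding the defining relation of $\bma_{r+1}$ -- and this is precisely the verification you defer to ``elementary inequalities\ldots{} together with monotonicity in $a$.'' That deferral is where the real content sits, so the proof as written has a genuine, if small, gap: you never actually establish $q_r(a)\in(\hc_2(a),c_1)$, which is the only assertion that is not pure algebra once the itinerary is set up, and you attribute one of the two needed inequalities to the wrong hypothesis.
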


On the other hand, for $r=1$ and $a\in (\sqrt{2},2)$, denote
\[
  q_1(a):=\frac{1}{1+a}\in (c,\hc_2(a))\,,
 \]
i.e. $q_1(a)$ is the orientation-reversing fixed point of $f_a$.

The following two elementary lemmas are well known. See, for example, \cite[Lemma~5.5]{CKY} and \cite[Lemma~7.3]{CKY}  respectively.
\begin{lem}\label{lem:periodic cirt dense}
  The following set
\[
 \mscp:=\{a\in (\sqrt{2},2): \exists~n\ge 1 \text{ s.t. } c_n(a)=c\}
\]
is dense in $(\sqrt{2},2)$.
\end{lem}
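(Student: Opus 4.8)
The plan is to exploit the exponential expansion of the parameter maps $\varphi_n$ on their monotone pieces, together with the parameter–phase correspondence already recorded in Lemma~\ref{lem:varphi mono}. Fix an arbitrary nonempty open interval $J\subset(\sqrt2,2)$; it suffices to produce some $a\in J$ and some $n\ge1$ with $c_n(a)=c$, since $J$ is arbitrary.

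First I would argue that $\varphi_n$ cannot be monotone on $J$ for every $n$. Indeed, if $\varphi_n$ were monotone on $J$, then since $\varphi_n(J)=\{c_n(a):a\in J\}\subset I_a\subset[-1,1]$ has length at most $2$, integrating the lower bound in \eqref{eq:varphi growth} would give $2\ge|\varphi_n(J)|=\int_J|\varphi_n'|\,\ge\,C^{-1}(\sqrt2)^{\,n}|J|$, which fails once $n$ is large. (Equivalently, one may simply invoke Lemma~\ref{lem:mono growth}: since $\sup J>1$, the number $\mclm_n(J)$ of maximal monotone intervals of $\varphi_n$ on $J$ tends to infinity, so $\varphi_n$ is non-monotone on $J$ for all large $n$.)

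Next I would take $n$ to be the least integer for which $\varphi_n$ is not monotone on $J$; note $n\ge3$. By Lemma~\ref{lem:varphi mono}, non-monotonicity of $\varphi_n$ on $J$ means precisely that there exist $a$ in the interior of $J$ and an index $1\le k<n$ with $c_k(a)=c$. Such an $a$ lies in $\mscp\cap J$, and since $J$ was an arbitrary nonempty open subinterval of $(\sqrt2,2)$, this shows $\mscp$ is dense.

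There is no serious obstacle here; the statement is a soft consequence of the fact that $|\varphi_n'|$ grows like $a^n$ on monotone laps. The only points requiring a moment's care are the containment $\varphi_n(J)\subset[-1,1]$, which holds because $c_n(a)=f_a^n(c)\in I_a=[1-a,1]$ and $a\le2$, and the bookkeeping in Lemma~\ref{lem:varphi mono} that identifies loss of monotonicity with the critical value hitting $c$; if one prefers the route through Lemma~\ref{lem:mono growth}, the relation $\mclm_n(J)=\#\mscc_n(J)+1$ recorded in its proof makes the conclusion even more direct, since $\mclm_n(J)\ge2$ then immediately furnishes $a\in J$ with $c_m(a)=c$ for some $3\le m<n$.
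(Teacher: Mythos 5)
Your proof is correct. Note that the paper does not actually prove this lemma at all --- it is quoted as well known with a pointer to \cite[Lemma~5.5]{CKY} --- so there is no internal proof to compare against; your argument is the standard one and is fully self-contained given what the paper does record. The two ingredients you use are exactly right: the lower bound in \eqref{eq:varphi growth} (equivalently \eqref{eq:distortion}) forces $\varphi_n$ to lose monotonicity on any fixed nondegenerate interval $J$ as soon as $C^{-1}(\sqrt2)^{\,n}|J|>2$, since $\varphi_n(J)\subset[-1,1]$; and the equivalence in Lemma~\ref{lem:varphi mono} converts non-monotonicity of $\varphi_n$ on $J$ into the existence of $a\in\inr J$ with $c_k(a)=c$ for some $1\le k<n$, which is precisely membership in $\mscp$. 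The only cosmetic remark is that taking $n$ minimal is unnecessary --- any $n$ for which $\varphi_n$ is non-monotone on $J$ already produces the desired parameter --- and your parenthetical detour through Lemma~\ref{lem:mono growth} is logically sound but rests on the same distortion estimate, so it buys nothing extra.
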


\begin{lem}\label{lem:prefix dense}
 The following set
\[
    \mscq_1:=\{a\in(\sqrt{2},2): \exists~n\ge 1 \text{ s.t. } c_n(a)=q_1(a)\}
\]
is dense in $(\sqrt{2},2)$.
\end{lem}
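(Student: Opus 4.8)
The plan is to combine the already-established density of $\mscp$ (Lemma~\ref{lem:periodic cirt dense}) with an intermediate value argument. Since $\overline{\mscp}=[\sqrt2,2]$, it suffices to show that every $\tilde a\in\mscp$ lies in $\overline{\mscq_1}$. Fix such an $\tilde a$ and let $N$ be the minimal period of its critical orbit, so $c_N(\tilde a)=c$, $c_j(\tilde a)\neq c$ for $1\le j<N$, and hence $c_{kN}(\tilde a)=c$ for every $k\ge1$. For $k\ge2$ the parameter $\tilde a$ is then a break point of $\varphi_{kN}$, so it is an endpoint of a maximal monotone interval $W_k=(\tilde a,\tilde a+\eta_k)$ of $\varphi_{kN}$ (one side suffices); by \eqref{eq:distortion} one has $\eta_k\le Ca^{-kN}\to0$, while $\varphi_{kN}$ maps $\overline{W_k}$ monotonically onto an interval $V_k$ having $\varphi_{kN}(\tilde a)=c$ as one endpoint.

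The decisive point is the ``large image'' estimate: $|V_k|\ge\kappa$ for some $\kappa>0$ independent of $k$. I would derive this from the piecewise linearity of $f_{\tilde a}$ together with the repelling nature of the periodic critical orbit: near $\tilde a$ the map $\varphi_{kN}$ is, to leading order, a one-sided ``tent'' $a\mapsto c\pm\mu_k|a-\tilde a|$ whose slope $\mu_k$ is comparable to $\tilde a^{(k-1)N}$, so its monotone branch out of $\tilde a$ has length comparable to $a^{-kN}$ and image of length bounded below. Granting this, fix $M_0$, independent of $k$, large enough that $f_a^{M_0}(U)=I_a$ whenever $U\subseteq I_a$ is an interval with $|U|\ge\kappa/2$ and $a$ lies in a fixed neighbourhood of $\tilde a$; this is legitimate since $f_{\tilde a}$, being a tent map of slope $\ge\sqrt2$, is locally eventually onto with a covering time controlled by the length of $U$, and $a$ stays bounded away from $\sqrt2$. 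Then for each large $k$ we have $f_{\tilde a}^{M_0}(V_k)=I_{\tilde a}$, while $\varphi_{kN+M_0}(\overline{W_k})=\{f_a^{M_0}(c_{kN}(a)):a\in\overline{W_k}\}$ differs from $f_{\tilde a}^{M_0}(V_k)$ only through replacing $f_a^{M_0}$ by $f_{\tilde a}^{M_0}$ with $|a-\tilde a|\le\eta_k$; by uniform continuity of $(a,x)\mapsto f_a^{M_0}(x)$ and $\eta_k\to0$ it is therefore Hausdorff-close to $I_{\tilde a}$. Being a continuous image of an interval it is itself an interval, so for all large $k$ it contains a fixed-size neighbourhood of the fixed point value $q_1(\tilde a)$ in its interior.

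Finally, $q_1(a)=1/(1+a)$ depends smoothly on $a$ and $q_1(\tilde a)\in(c,\hc_2(\tilde a))$ by Lemma~\ref{lem:periodic orbit}, so $q_1$ varies by $o(1)$ over $\overline{W_k}$ as $k\to\infty$; hence $\varphi_{kN+M_0}$ takes on $\overline{W_k}$ values lying below $q_1(\tilde a)$ and above $q_1(\tilde a)$ by a definite amount, and the continuous function $a\mapsto\varphi_{kN+M_0}(a)-q_1(a)$ changes sign on $\overline{W_k}$. An intermediate value gives $a^\ast\in\overline{W_k}=[\tilde a,\tilde a+\eta_k]$ with $c_{kN+M_0}(a^\ast)=\varphi_{kN+M_0}(a^\ast)=q_1(a^\ast)$, i.e. $a^\ast\in\mscq_1$; since $\eta_k\to0$, this shows $\tilde a\in\overline{\mscq_1}$, and the proof is complete. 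The only step that is not routine is the large-image estimate for $|V_k|$; it is exactly the kind of parameter-transversality input recorded in \cite{CKY}, and one could alternatively bypass the whole argument via the Milnor--Thurston correspondence between tent parameters and admissible kneading sequences, the eventually-constant sequences $w\,1^{\infty}$ — which are precisely the kneading sequences of parameters in $\mscq_1$ — being dense among the admissible ones.
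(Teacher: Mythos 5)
The paper does not actually prove this lemma: it is stated as well known with a pointer to \cite[Lemma~7.3]{CKY}, so there is no internal argument to compare against, and your proposal should be judged as a self-contained proof. Its architecture --- reduce to periodic-critical parameters $\tilde a\in\mscp$ via Lemma~\ref{lem:periodic cirt dense}, pass to the one-sided maximal monotone branch $W_k$ of $\varphi_{kN}$ at $\tilde a$, show its image $V_k$ has length bounded below, push forward a bounded number $M_0$ of further steps to cover a definite neighbourhood of $q_1$, and conclude by the intermediate value theorem --- is exactly the standard route, and the endgame (uniform covering time $M_0$ on a compact subset of $(\sqrt2,2]$, comparison of $\varphi_{kN+M_0}(\overline{W_k})$ with $f_{\tilde a}^{M_0}(V_k)$, sign change of $a\mapsto\varphi_{kN+M_0}(a)-q_1(a)$) is sound.

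The one genuine gap is the step you yourself flag: the uniform lower bound $|V_k|\ge\kappa$. The justification you offer is circular: you derive it from ``the monotone branch out of $\tilde a$ has length comparable to $a^{-kN}$,'' but by \eqref{eq:distortion} a lower bound $|W_k|\gtrsim a^{-kN}$ is \emph{equivalent} to the lower bound on $|V_k|$ that you are trying to establish; a priori the branch could be cut much earlier by an intermediate passage of some $\varphi_m(W_k)$, $m<kN$, through $c$. (Also, the slope of $\varphi_{kN}$ on a monotone interval is comparable to $a^{kN}$ by \eqref{eq:varphi growth}, not to $\tilde a^{(k-1)N}$.) The estimate is nonetheless true and can be proved in a few lines from exactly the periodicity you invoke. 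Set $\epsilon_0:=\min_{1\le j<N}|c_j(\tilde a)-c|>0$, and let $b_k$ be the right endpoint of $W_k$; by maximality there is $m<kN$ with $c_m(b_k)=c$. Since $\varphi_m$ is strictly monotone, hence injective, on $\overline{W_k}$ while $\varphi_{lN}(\tilde a)=c$ for every $l\ge1$, one cannot have $N\mid m$; therefore $|\varphi_m(\overline{W_k})|=|c_m(\tilde a)-c|\ge\epsilon_0$, and two applications of \eqref{eq:distortion} (first at time $m$ to get $|W_k|\ge C^{-1}a^{-m}\epsilon_0\ge C^{-1}a^{-kN}\epsilon_0$, then at time $kN$) yield $|V_k|\ge C^{-2}\epsilon_0=:\kappa$. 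With this inserted your proof is complete; the kneading-theoretic alternative you mention at the end is also viable but, as written, is a pointer rather than a proof.
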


\begin{cor}\label{cor:preperiodic dense}
   Given $r\ge 2$, the following set
  \[
  \mscq_r:=\big\{a\in (\mathbbm{a}_r,\mathbbm{a}_{r+1}) :\exists~n\ge 1 \text{ s.t. } c_n(a)=q_r(a) \big\}
  \]
is dense in $(\mathbbm{a}_r,\mathbbm{a}_{r+1})$.
\end{cor}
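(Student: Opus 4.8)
The plan is to reduce Corollary~\ref{cor:preperiodic dense} to one application of the intermediate value theorem, and then to realise the required parameter configuration by the expansion argument underlying the density of $\mscq_1$ (\cite[Lemma~7.3]{CKY}), now carrying the repelling periodic point $q_r(a)$ provided by Lemma~\ref{lem:periodic orbit} in place of the fixed point $q_1(a)$. Fix $r\ge 2$ and a nonempty open interval $W\subset(\mathbbm{a}_r,\mathbbm{a}_{r+1})$. Since Lemma~\ref{lem:periodic orbit} gives $c_2(a)<c<\hc_2(a)<q_r(a)<c_1$ for every $a\in W$, it suffices to exhibit $n\ge 1$ and a closed subinterval $[\alpha,\beta]\subset W$ on which $\varphi_n$ is monotone and such that $\varphi_n(\alpha)-q_r(\alpha)$ and $\varphi_n(\beta)-q_r(\beta)$ have opposite signs; for instance $\varphi_n(\alpha)=c_1$ together with $\varphi_n(\beta)\le c$ will do. Indeed $a\mapsto\varphi_n(a)-q_r(a)$ is then continuous on $[\alpha,\beta]$ and changes sign, so it vanishes at some $a^{\ast}\in(\alpha,\beta)\subset W$, giving $c_n(a^{\ast})=q_r(a^{\ast})$ and hence $a^{\ast}\in\mscq_r\cap W$. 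Note that $\varphi_n(\alpha)=c_1$ amounts to $c_{n-1}(\alpha)=c$, so $\alpha$ must be a parameter whose critical orbit hits $c$; such parameters are dense by Lemma~\ref{lem:periodic cirt dense}, which is the entry point for the construction.

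To build the configuration I would start from $a_0\in W$ with $c_m(a_0)=c$ for the least such $m$; by Lemma~\ref{lem:varphi mono}, $\varphi_m$ is monotone near $a_0$ with $\varphi_m(a_0)=c$ and $\varphi_m'(a_0)\ne 0$. Taking the index $N$ in the residue class $1\bmod m$ forces $\varphi_N(a_0)=c_1$, and $a_0$ is an endpoint of a maximal monotone interval $[a_0,\beta_N]$ of $\varphi_N$. A distortion estimate from \eqref{eq:distortion} bounds $\beta_N-a_0$ below by a fixed multiple of $a^{-N}$: writing $k_N<N$ for the index with $c_{k_N}(\beta_N)=c$, the map $\varphi_{k_N}$ is monotone on $[a_0,\beta_N]$ and carries it onto an interval one endpoint of which is $c$ and the other a point of the superstable orbit of $f_{a_0}$ at definite distance from $c$ (a harmless degenerate case aside), so this image has length bounded below, and \eqref{eq:distortion} converts that into $\beta_N-a_0\ge\kappa\,a^{-N}$. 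Hence $\varphi_N([a_0,\beta_N])=[\varphi_N(\beta_N),c_1]$ has length bounded below uniformly in $N$. If this length already exceeds $\max_{a\in\cl W}\bigl(c_1-q_r(a)\bigr)$ we are done by the first paragraph; otherwise I would cut down to a subinterval on which $\varphi_N$ still has image a fixed short interval abutting $c_1$, then apply further iterates of $f_a$ and invoke \eqref{eq:distortion} repeatedly to see that the postcritical image interval grows by a factor close to $a$ at each step while $\varphi_{N+j}$ remains monotone on a suitably shrunk parameter subinterval (shadowing the superstable orbit of $f_{a_0}$ and resetting whenever the image reaches $c_1$), until after finitely many steps its image runs across the graph of $a\mapsto q_r(a)$; at that stage the intermediate value argument above applies.

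The one delicate point is exactly this last step --- converting the soft statement that a short postcritical interval expands and eventually sweeps past $q_r(a)$ into an honest monotone lap of some $\varphi_n$ inside $W$ with controlled endpoint values. This is the standard pullback bookkeeping of \cite{CKY}; alternatively it can be phrased through the fact that $f_a$ is locally eventually onto, uniformly for $a$ in a compact subinterval of $(\sqrt 2,2)$. Everything else --- the reduction to the intermediate value theorem and the inputs from Lemmas~\ref{lem:periodic orbit}, \ref{lem:periodic cirt dense} and \ref{lem:varphi mono} --- is routine, and the whole argument runs parallel to \cite[Lemma~7.3]{CKY} with $q_1(a)$ replaced by $q_r(a)$ and the parameter window restricted to $(\mathbbm{a}_r,\mathbbm{a}_{r+1})$.
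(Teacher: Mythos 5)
Your plan correctly reduces the statement to an intermediate value theorem argument, and your first two building blocks (a parameter $a_0$ with superstable critical orbit, from Lemma~\ref{lem:periodic cirt dense}, and the distortion bound \eqref{eq:distortion}) are sound. But from there you attempt to re\emph{prove}, essentially from scratch, the analogue of Lemma~\ref{lem:prefix dense} with $q_r$ in place of $q_1$, by expanding a short postcritical image interval until it ``sweeps past'' the graph of $a\mapsto q_r(a)$. The step you yourself flag as delicate --- converting that heuristic into an honest monotone lap of some $\varphi_n$ inside $W$ with controlled endpoint values, ``resetting whenever the image reaches $c_1$'' --- is exactly the content that is missing, and it is not a triviality: it is the whole pullback bookkeeping of \cite{CKY}. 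As written, the ``otherwise'' branch (which you cannot rule out, since your lower bound on $|\varphi_N([a_0,\beta_N])|$ depends on how close the superstable orbit of $f_{a_0}$ comes to $c$, something you do not control) is a genuine gap.

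The paper avoids all of this with a much shorter argument that you nearly set up but did not see through. It uses Lemma~\ref{lem:prefix dense} as a \emph{black box}, with $q_1$ rather than $q_r$: pick $a_2\in J$ with $a_2>a_1$ and $\varphi_{n_2}(a_2)=q_1(a_2)$. Once the critical orbit lands on the fixed point $q_1(a_2)$ it stays there, so $\varphi_n(a_2)=q_1(a_2)$ for every $n\ge n_2$; and since $q_1(a_2)<\hc_2(a_2)<q_r(a_2)$ (Lemma~\ref{lem:periodic orbit}), this already gives $\varphi_n(a_2)-q_r(a_2)<0$ for free. Similarly, if $\varphi_{n_1}(a_1)=c$ then $\varphi_{kn_1+1}(a_1)=c_1>q_r(a_1)$ for every $k\ge 1$. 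Choosing $n=kn_1+1\ge n_2$ makes both constraints hold simultaneously, and the intermediate value theorem applied to the continuous function $a\mapsto\varphi_n(a)-q_r(a)$ on $[a_1,a_2]\subset J$ yields $a^\ast\in\mscq_r\cap J$. No monotonicity of $\varphi_n$ on $[a_1,a_2]$ is required, and no expansion or distortion estimate is needed. The key inequality $q_1<\hc_2<q_r$, which lets the already-known density of $\mscq_1$ do the work, is the insight your proposal misses.
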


\begin{proof}
Let $J\subset (\mathbbm{a}_r,\mathbbm{a}_{r+1})$ be an open interval. It suffices to find $a\in J$ and $n\ge 1$ such that  $c_n(a)=q_r(a)$. By Lemma~\ref{lem:periodic cirt dense}, there exist $n_1\ge 1$ and $a_1\in J$ such that $\varphi_{n_1}(a_1)=c$. By Lemma~\ref{lem:prefix dense}, there exist
$n_2\ge 1$ and $a_2\in J$ such that $\varphi_{n_2}(a_2)=q_1(a_2)$ and $a_2>a_1$. Let $k\ge 1$ be such that $n:=kn_1+1\ge n_2$. Then $\varphi_n(a_1)=c_1$ and $\varphi_n(a_2)=q_1(a_2)<\hc_2(a_2)$, and hence the continuous function $a\mapsto \varphi_n(a)-q_r(a)$ has opposite sign at two end points of $[a_1,a_2]$. The conclusion follows from intermediate value theorem.
 \end{proof}

The second assertion in  Theorem~\ref{thm:pointwise} follows immediately from Lemma~\ref{lem:prefix dense} and Corollary~\ref{cor:preperiodic dense} and the obvious facts below.

\begin{itemize}
  \item For $a\in(\sqrt{2},2)$, if $c_m(a)\notin\{ c, \hc_2(a)\}$ is a periodic point of $f_a$ for some $m$, then the two sequences $r_n^{(i)}(a)\cdot a^n$, $i=1,2$, introduced in Definition~\ref{def:slow rec} are bounded from below. In particular, $\cup_{r=1}^\infty \mscq_r\subset \slowrec$.
  \item Given $r\ge 2$ and  $a\in(\bma_{r},2]$, by Proposition~\ref{prop:window},  $\liminf\limits_{n\to\infty}\frac{1}{n}\Gamma_n(a)\ge\frac{r-1}{r}$.
  \item If $a\in \mscq_1$, then $\gamma(a)=1$.
  \item For each $r\ge 2$, if $a\in\mscq_r$, then $\gamma(a)=\frac{r-1}{r}$.
\end{itemize}
As a corollary, given $r\ge 2$ and $a\in\mscq_r$, $\beta(a)=\frac{(r-1)\log 2}{r\log a}=\alpha(r,a)$. Since $\mscq_r$ is dense in $(\mathbbm{a}_r,\mathbbm{a}_{r+1})$, the second assertion in Theorem~\ref{thm:interval} follows.

\subsection*{Acknowledgments}
We would like to thank Yiwei Zhang for attracting our attention to this topic. We also would like to thank Gang Liao for helpful comments.

\bibliographystyle{abbrv}             

\bibliography{refer}

\begin{thebibliography}{1}

\bibitem{Ba00}
V.~Baladi.
\newblock {\em Positive transfer operators and decay of correlations},
  volume~16 of {\em Advanced Series in Nonlinear Dynamics}.
\newblock World Scientific Publishing Co., Inc., River Edge, NJ, 2000.

\bibitem{BR94}
V.~Baladi and D.~Ruelle.
\newblock An extension of the theorem of {M}ilnor and {T}hurston on the zeta
  functions of interval maps.
\newblock {\em Ergodic Theory Dynam. Systems}, 14(4):621--632, 1994.

\bibitem{BM96}
K.~Brucks and M.~Misiurewicz.
\newblock The trajectory of the turning point is dense for almost all tent
  maps.
\newblock {\em Ergodic Theory Dynam. Systems}, 16(6):1173--1183, 1996.

\bibitem{Br98}
H.~Bruin.
\newblock For almost every tent map, the turning point is typical.
\newblock {\em Fund. Math.}, 155(3):215--235, 1998.

\bibitem{CKY}
E.~M. Coven, I.~Kan, and J.~A. Yorke.
\newblock Pseudo-orbit shadowing in the family of tent maps.
\newblock {\em Trans. Amer. Math. Soc.}, 308(1):227--241, 1988.

\bibitem{DM19}
N.~Dobbs and N.~Mihalache.
\newblock Diabolical entropy.
\newblock {\em Comm. Math. Phys.}, 365(3):1091--1123, 2019.

\bibitem{MT}
J.~Milnor and W.~Thurston.
\newblock On iterated maps of the interval.
\newblock In {\em Dynamical systems ({C}ollege {P}ark, {MD}, 1986--87)}, volume
  1342 of {\em Lecture Notes in Math.}, pages 465--563. Springer, Berlin, 1988.

\bibitem{MR18}
M.~Misiurewicz and A.~Rodrigues.
\newblock Counting preimages.
\newblock {\em Ergodic Theory Dynam. Systems}, 38(5):1837--1856, 2018.

\bibitem{RT15}
H.~H. Rugh and T.~Lei.
\newblock Kneading with weights.
\newblock {\em J. Fractal Geom.}, 2(4):339--375, 2015.

\end{thebibliography}


\end{document}